\documentclass[12pt,reqno]{NumPDEsArticle}

\usepackage{fullpage}
\usepackage{hyperref}
\usepackage{amsmath,amssymb,amsthm}
\usepackage{todonotes}
\usepackage{nameref}
\usepackage{biblatex}
\usepackage{booktabs}
\usepackage{bm}
\usepackage{orcidlink}
\usepackage{moreverb}
\usepackage{mathtools, mathrsfs}
\allowdisplaybreaks
\usepackage[shortlabels]{enumitem}
\usepackage{tikz}
\usepackage{tkz-euclide}
\usepackage[caption=false]{subfig}

\usetikzlibrary{shapes.geometric, calc}
\usepackage{xcolor}
\usepackage{xfrac}
\usepackage{filecontents,pgfplots,pgfplotstable}
\usepackage{graphicx}
\usepackage{amsmath}
\usetikzlibrary{matrix}
\definecolor{TUMagenta}{RGB}{186, 70, 130}
\definecolor{TUBlue}{RGB}{0,102,153}
\definecolor{TUGreen}{RGB}{0, 126, 113}
\definecolor{TUYellow}{RGB}{225, 137, 34}
\definecolor{pyBlue}{HTML}{1f77b4}
\definecolor{pyRed}{HTML}{d62728}
\definecolor{pyGreen}{HTML}{2ca02c}
\definecolor{pyOrange}{HTML}{ff7f0e}
\definecolor{pyPurple}{HTML}{9467bd}
\definecolor{pyYellow}{HTML}{bcbd22}
\definecolor{pyGrey}{HTML}{7f7f7f}
\newcommand{\vvvert}{|\mkern-1.5mu|\mkern-1.5mu|}

\DeclareMathOperator*{\essinf}{ess\,inf}
\DeclareMathOperator*{\esssup}{ess\,sup}
\tikzset{reference/.style={thick,dashed}}

\tikzset{reference/.style={thick,dashed}}


\pgfplotsset{%
compat=newest,%
every axis/.style={scale only axis},%
grid style={densely dotted, semithick},%
}

\usepackage[basicdelimiters]{NumPDEsMacros}

\DeclareFieldFormat{titlecase}{#1}

\addbibresource{literature.bib}


\hypersetup{
colorlinks,
citecolor = ForestGreen,
linkcolor = BrickRed,
urlcolor  = NavyBlue
}

\usepackage[export]{adjustbox}




\title[Cost-optimal AILFEM for locally Lipschitz problems]{Cost-optimal adaptive FEM\\ with linearization and
algebraic solver \\ for semilinear elliptic PDEs}

\author{Maximilian Brunner~\orcidlink{0000-0003-0636-1491}}
\author{Dirk Praetorius~\orcidlink{0000-0002-1977-9830}}
\author{Julian Streitberger~\orcidlink{0000-0003-1189-0611}}
\address{TU Wien, Institute of Analysis and Scientific Computing, Wiedner Hauptstr. 8-10/E101/4, 1040 Vienna, Austria}
\email{maximilian.brunner@asc.tuwien.ac.at \quad \texttt{(corresponding author)}}
\email{dirk.praetorius@asc.tuwien.ac.at}
\email{julian.streitberger@asc.tuwien.ac.at}

\keywords{adaptive iterative linearized finite element method, semilinear PDEs, iterative
solver, a posteriori error estimation, convergence, optimal convergence rates, cost-optimality}

\subjclass[2010]{65N30, 65N50, 65N15, 65Y20, 41A25}

\thanks{This research was funded in whole or in part by the Austrian Science Fund (FWF)
  [\href{https://www.fwf.ac.at/en/research-radar/10.55776/F65}{10.55776/F65},
    \href{https://www.fwf.ac.at/en/research-radar/10.55776/I6802}{10.55776/I6802}, and
  \href{https://www.fwf.ac.at/en/research-radar/10.55776/P33216}{10.55776/P33216}].
  For open access purposes, the author has applied a CC BY public copyright license
to any author accepted manuscript version arising from this submission.}




\begin{document}

\maketitle

\begin{abstract}
We consider scalar semilinear elliptic PDEs, where the nonlinearity is strongly monotone, but only locally
Lipschitz continuous. To linearize the arising discrete nonlinear problem, we employ a damped Zarantonello
iteration, which leads to a linear Poisson-type equation that is symmetric and positive definite. The
resulting system is solved by a contractive algebraic solver such as a multigrid method with local smoothing.
We formulate a fully adaptive algorithm that equibalances the various error components coming from mesh
refinement, iterative linearization, and algebraic solver. We prove that the proposed adaptive iteratively
linearized finite element method (AILFEM) guarantees convergence with optimal complexity, where the rates are
understood with
respect to the overall computational cost (i.e., the computational time). Numerical experiments investigate
the involved adaptivity parameters.
\end{abstract}


\section{Introduction}
\subsection{Problem setting and main results}
Undoubtedly, adaptive finite element methods (AFEMs) are in the canon of reliable numerical methods for the
solution of partial differential equations (PDEs). Some of the seminal contributions in this still very
active area are~\cite{bv1984, doerfler1996, mns2000, bdd2004, stevenson2007, ckns2008, ks2011, cn2012,
ffp2014} for linear problems,~\cite{veeser2002, dk2008, bdk2012, gmz2012, ghps2021} for nonlinear problems,
and~\cite{axioms} for an abstract framework.

By means of conforming finite elements, this paper is concerned with the cost-optimal computation of the
solution $u^\star \in H_0^1(\Omega)$ to the \emph{semilinear} elliptic model problem
\begin{equation}\label{eq:modelproblem}
  -\operatorname{div}(\boldsymbol{A} \nabla u^{\star}) + b(u^{\star}) = F \quad \text{ in } \Omega \quad
  \text{ subject to } \quad
  u^{\star}=0 \quad \text{ on } \partial \Omega,
\end{equation}
with a Lipschitz domain $\Omega \subset \mathbb{R}^d$ for $d \in \{1,2,3\}$, an elliptic diffusion coefficient
$\boldsymbol{A}\colon \Omega \to \mathbb{R}_{\textup{sym}}^{d\times d}$, a monotone nonlinearity $b \colon
\Omega \to \mathbb{R}$, and
sufficiently regular data $F$. The assumptions are such that the Browder--Minty theorem ensures existence and
uniqueness.

Moreover, the model problem~\eqref{eq:modelproblem} can be recast into the framework of strongly monotone and
locally Lipschitz continuous operators such that the abstract model problem reads: For $\mathcal{X} = H_0^1(\Omega)$
with topological dual space $\mathcal{X}' = H^{-1}(\Omega)$ and duality bracket $\langle \, \cdot \, , \,
\cdot \, \rangle$, a nonlinear
operator $\mathcal{A}\colon \mathcal{X} \to \mathcal{X}'$, and given data $F\in \mathcal{X}'$, we aim to
approximate the solution $u^\star \in \mathcal{X}$ to
\begin{equation}\label{eq:weakform}
  \langle \, \mathcal{A} u^\star \, , \, v \, \rangle = \langle \, F \, , \, v \, \rangle \quad \text{ for
  all } v \in \mathcal{X}.
\end{equation}
To this end, we employ conforming piecewise polynomial finite element spaces $\mathcal{X}_H \subset \mathcal{X}$ with
the corresponding discrete solution $u^\star_H \in \mathcal{X}_H$ to
\begin{equation}\label{eq:weakform:discrete}
  \langle \, \mathcal{A} u^\star_H \, , \, v_H \, \rangle = \langle \, F \, , \, v_H \, \rangle \quad \text{
  for all } v_{H} \in \mathcal{X}_H,
\end{equation}
which, however, can hardly be computed exactly, since~\eqref{eq:weakform:discrete} is still a discrete
nonlinear system of equations.

The major difficulty of such problems is that the Lipschitz constant of $\mathcal{A}$ depends on the considered
functions $v$ and $w$ in the sense that for $\vartheta> 0$, it holds that
\begin{equation}
  \| \mathcal{A} v -\mathcal{A} w \|_{\mathcal{X}'} \le  L[\vartheta]\, \vvvert v- w \vvvert \quad  \text{
  for all } v, w \in \mathcal{X}
  \text{ with } \max\big\{ \vvvert v \vvvert, \vvvert w \vvvert \big\} \le \vartheta. \tag{LIP$'$}
\end{equation}
Moreover, this dependence also appears in the stability constant of the residual-based \textsl{a~posteriori}
error estimator~\cite{v2013, bbimp2022}.

Hence, for such a problem class, any approximate numerical scheme must ensure uniform boundedness of all
computed approximations $u_H^\star \approx u_H \in \mathcal{X}_H$ throughout the algorithm. This
constitutes the first main result: The developed \emph{adaptive iteratively linearized FEM}  (AILFEM)
algorithm (more detailed in Algorithm~\ref{algorithm:AILFEM} below) guarantees a uniform upper bound on all
iterates (see Theorem~\ref{theorem:uniformBoundedness} below). In particular, the algorithm steers the
decision whether it is more preferable to refine the mesh adaptively or to do an additional step of
linearization or a further algebraic solver step instead.

Once uniform boundedness is established, we prove full R-linear convergence
(Theorem~\ref{theorem:RLinearConvergence} below) as the second main result. Full R-linear convergence
establishes contraction in each step of the algorithm regardless of the algorithmic decision. At the expense
of a more challenging analysis that links energy arguments with the energy norm of the algebraic solver, full
R-linear convergence is guaranteed for all mesh levels $\ell \ge \ell_0=0$ while prior
works~\cite{bbimp2022, bhimps2023} used compactness arguments which only guaranteed the existence of the index
$\ell_0\in \mathbb{N}_0$ (and not necessarily $\ell_0=0$). As a consequence of uniform boundedness and full R-linear
convergence, the third main result proves optimal rates both understood with respect to the degrees of
freedom and with respect to the overall computational cost (Corollary~\ref{cor:rates=complexity} and
Theorem~\ref{th:optimal_complexity}) of the proposed algorithm.

Compared to existing results in the literature~\cite{ghps2021, hpsv2021, hpw2021, fps2023},
all three main results require a suitable adaptation of the stopping criteria of the linearization loop as
well as sufficiently many iterations in the algebra loop, together with subtle technical challenges, in
particular, for the proof of full R-linear convergence.

\subsection{From AFEM to AILFEM}
On each mesh level (with mesh index $\ell$), the arising discrete nonlinear problems cannot be solved exactly
in practice as supposed in classical AFEM~\cite{veeser2002, dk2008, bdk2012, gmz2012}. To deal with this
issue, we follow~\cite{cw2017, ghps2018, hw2020:ailfem} and consider the so-called \emph{Zarantonello
iteration} from~\cite{zarantonello1960} as a linearization method (with index $k$). The Zarantonello
iteration is a Richardson-type iteration where only a Laplace-type problem has to be solved in each
iteration. Since the arising large SPD systems are still expensive to solve exactly, we employ a contractive
algebraic solver as a nested loop to solve the Zarantonello system inexactly (with iteration index $i$). The
loops thus come with a natural nestedness (see Figure~\ref{fig:solveAndEstimate}), where the overall
schematic loop of the algorithm reads
\medskip
\begin{center}
  \includegraphics[width = 0.7\textwidth]{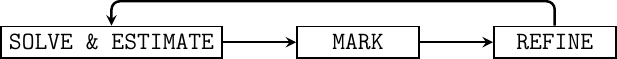}
\end{center}
\medskip
\begin{figure}
  \begin{center}
    \includegraphics[width = \textwidth]{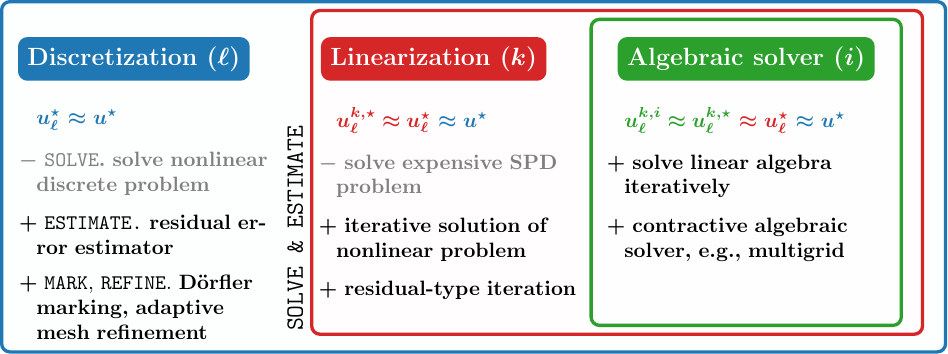}
  \end{center}
  \caption{Depiction of the nested loops of the AILFEM algorithm~\ref{algorithm:AILFEM} below.}
  \label{fig:solveAndEstimate}
\end{figure}
Since the proposed adaptive loop depends on all previous computations, optimal convergence rates should
be understood with respect to the overall computational cost. This idea of \emph{optimal complexity}
originates from the wavelet community~\cite{cdd2001, cdd2003} and was last used in the context of AFEM
in~\cite{stevenson2007} for the Poisson model problem and~\cite{cg2012} for the Poisson eigenvalue problem,
both under realistic assumptions on generic iterative solvers.

AILFEMs with iterative and/or inexact solver with \textsl{a~posteriori} error estimators are found in,
e.g.,~\cite{bms2010, aev2011, agl2013, ev2013, aw2015, cw2017}  and references therein. Besides the
Zarantonello iteration, for globally Lipschitz continuous nonlinearities, the works~\cite{hw2020:convergence,
hw2020:ailfem, hpw2021} analyze also other linearizations such as the Ka\v{c}anov iteration or damped Newton
schemes. Optimal complexity of the Zarantonello loop that is coupled with an algebraic loop is analyzed
in~\cite{bhimps2023} for nonsymmetric second-order linear elliptic PDEs and for strongly monotone (and
globally Lipschitz continuous) model problems in~\cite{ghps2018, ghps2021, hpsv2021,  hpw2021, fps2023}.

The literature on AILFEMs for locally Lipschitz continuous problems is scarce and closing this gap is the aim
of this work. The semilinear model problem is treated in, e.g.,~\cite{aw2015} by a damped Newton iteration
and in~\cite{ahw2022} by an energy-based approach with experimentally observed optimal rates. We also refer
to the own work~\cite{bbimp2022cost} for an AILFEM with optimal rates with respect to the overall
computational cost, however, under the assumption that the arising linear systems can be solved
at linear cost.
More precisely, compared to the previous work~\cite{bbimp2022cost}, in this paper we also take an optimal
  algebraic solver for the linearized problem into account and propose an adaptive algorithm
  ensuring optimal convergence
  rates with respect to the computation time. Moreover, compared to~\cite{bbimp2022cost} that elaborates the
  proof of full R-linear convergence
  along the lines of~\cite{ghps2021}, we provide a much simpler proof inspired by~\cite{fps2023}.
  However, the work~\cite{fps2023}
  employs a general quasi-orthogonality from~\cite{feischl2022} that is not available for nonlinear problems
  in general since
  the proof relies on a stable LU-decomposition of the linear problem. Therefore, to avoid compactness
  arguments like in~\cite{bhimps2023}, we employ orthogonality in the underlying energy.

\subsection{Outline}
This paper is structured as follows: Section~\ref{section:monlip} introduces the abstract framework on
locally Lipschitz continuous operators. In Section~\ref{section:algorithm}, we formulate the (idealized)
AILFEM algorithm (Algorithm~\ref{algorithm:AILFEM}). We prove uniform boundedness for the final iterates of
the algebraic solver (Theorem~\ref{theorem:uniformBoundedness}).  Section~\ref{sec:Rlinear} presents the
second main result: Full R-linear convergence (Theorem~\ref{theorem:RLinearConvergence}). In particular,
rates with respect to the degrees of freedom coincide with rates with respect to the computational cost
(Corollary~\ref{cor:rates=complexity}). In Section~\ref{section:optimality}, we prove the main result on
optimal complexity of the proposed AILFEM algorithm (Theorem~\ref{th:optimal_complexity}). In
Section~\ref{section:numerics}, we present numerical experiments of the proposed AILFEM strategy and
investigate its optimal complexity for various choices of the adaptivity parameters.

\section{Strongly monotone operators}
\label{section:monlip}

This section introduces an abstract framework of strongly monotone and locally Lipschitz continuous
operators. This class of operators covers the model problem~\eqref{eq:modelproblem} of semilinear elliptic
PDEs with monotone semilinearity.

\subsection{Abstract model problem} \label{subsection:modelproblem}
Let $\mathcal{X}$ be a Hilbert space over $\mathbb{R}$ with scalar product $\langle\!\langle \cdot \, , \,
\cdot \rangle\!\rangle$ and induced norm $\vvvert \cdot \vvvert$. Let $\mathcal{X}_H \subseteq \mathcal{X}$
be a closed subspace. Let $\mathcal{X}'$ be the dual space with norm $\| \cdot \|_{\mathcal{X}'}$ and denote
by $\langle \cdot \, , \, \cdot \rangle$ the duality bracket on $\mathcal{X}' \times \mathcal{X}$. Let
$\mathcal{A} \colon \mathcal{X} \to \mathcal{X}'$ be a nonlinear operator. We suppose that $\mathcal{A}$ is
{\bf strongly monotone}, i.e., there exists a monotonicity constant $\alpha > 0$ such that
\begin{equation}\label{eq:strongly-monotone}
  \alpha \, \enorm{v - w}^2 \le \langle \mathcal{A} v - \mathcal{A} w \, , \, v - w \rangle
  \quad \text{ for all } v, w \in \mathcal{X}. \tag{SM}
\end{equation}
Moreover, we suppose that $\mathcal{A}$ is {\bf locally Lipschitz continuous}, i.e., for all $\vartheta > 0$,
there exists $L[\vartheta] >0$ such that
\begin{equation}\label{eq:locally-lipschitz}
  \langle \mathcal{A} v - \mathcal{A} w \, , \, \varphi \rangle \le L[\vartheta] \, \enorm{v - w} \, \enorm{\varphi}
  \quad \text{ for all } v, w, \varphi \in \mathcal{X}
  \text{ with } \max\big\{ \enorm{v}, \enorm{v - w}\big\} \le \vartheta. \tag{LIP}
\end{equation}

\begin{remark}
  We remark that local Lipschitz continuity is often defined differently in the existing literature,
  cf.~\cite[p.~565]{zeidler}: For all $\Theta > 0$, there exists $L'[\Theta] >0$ such that
  \begin{equation}\label{eq:locally-lipschitz:variant}\tag{LIP$'$}
    \langle \mathcal{A} v - \mathcal{A} w \, , \, \varphi \rangle \le L'[\Theta] \, \enorm{v - w} \, \enorm{\varphi}
    \text{ for all } v, w, \varphi \in \mathcal{X}
    \text{ with } \max\big\{ \enorm{v}, \enorm{w}\big\} \! \le \! \Theta.
  \end{equation}
  We note that the conditions~\eqref{eq:locally-lipschitz} and~\eqref{eq:locally-lipschitz:variant} are
  indeed equivalent in the sense that~\eqref{eq:locally-lipschitz}
  yields~\eqref{eq:locally-lipschitz:variant} with $\Theta = 2 \vartheta$, and,
  conversely,~\eqref{eq:locally-lipschitz:variant} yields~\eqref{eq:locally-lipschitz} with $\vartheta = 2 \, \Theta$.
  However, condition~\eqref{eq:locally-lipschitz} is better suited for the inductive proof of
  Proposition~\ref{prop:zarantonello} which is the main ingredient to guarantee uniform boundedness in
  Theorem~\ref{theorem:uniformBoundedness}.
\end{remark}
Without loss of generality, we may suppose that $\mathcal{A}0 \neq F \in \mathcal{X}'$. We consider the
operator equation:
Seek $u^\star \in \mathcal{X}$ that solves~\eqref{eq:weakform}.
For any closed subspace $\mathcal{X}_H \subseteq \mathcal{X}$, we consider the corresponding Galerkin
discretization~\eqref{eq:weakform:discrete}.
We note existence and uniqueness of the solutions to~\eqref{eq:weakform}--\eqref{eq:weakform:discrete} and a
C\'{e}a-type estimate.

\begin{proposition}[{\cite[Proposition~2]{bbimp2022cost}}]\label{prop:existence}
  Suppose that $\mathcal{A}$ satisfies~\eqref{eq:strongly-monotone} and~\eqref{eq:locally-lipschitz}.
  Then,~\eqref{eq:weakform}--\eqref{eq:weakform:discrete} admit unique solutions $u^\star \in \mathcal{X}$ and
  $u_H^\star \in \mathcal{X}_H$, respectively, and
  \begin{equation}\label{eq:exact:bounded}
    \max\big\{ \enorm{u^\star}, \enorm{u^\star_H} \big\} \le M \coloneqq \frac{1}{\alpha} \, \| F
    - \mathcal{A} 0 \|_{\mathcal{X}'} > 0
  \end{equation}
  as well as
  \begin{equation}\label{eq:cea}
    \enorm{u^\star - u_H^\star} \le C_{\textup{Céa}} \min_{v_H \in \mathcal{X}_H}  \enorm{u^\star
    -v_H} \quad \text{ with } \quad C_{\textup{Céa}} = L[2M]/\alpha.\hfill \qed
  \end{equation}
\end{proposition}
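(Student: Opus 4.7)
The plan is to establish existence and uniqueness via the Browder--Minty theorem, obtain the a priori bound by a single energy test, and then close the C\'ea estimate with Galerkin orthogonality combined with a correctly calibrated application of local Lipschitz continuity. The main subtlety lies in this last calibration: since~\eqref{eq:locally-lipschitz} is only local, one must have the bound $\enorm{u^\star}, \enorm{u_H^\star} \le M$ in hand \emph{before} invoking Lipschitz continuity in the C\'ea estimate, which dictates the order of the argument.

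For existence of $u^\star \in \mathcal{X}$, I would verify the three Browder--Minty hypotheses: monotonicity is immediate from~\eqref{eq:strongly-monotone}; hemicontinuity of $t \mapsto \langle \mathcal{A}(v + tw), \varphi \rangle$ follows from~\eqref{eq:locally-lipschitz} applied on the bounded segment $\{v + tw : t \in [0,1]\}$; and coercivity $\langle \mathcal{A} v, v \rangle / \enorm{v} \to \infty$ as $\enorm{v} \to \infty$ comes from strong monotonicity with $w = 0$. Uniqueness is a one-liner: if two functions solve~\eqref{eq:weakform}, then~\eqref{eq:strongly-monotone} forces $\alpha$ times the squared norm of their difference to vanish. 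The same three verifications transfer verbatim to the closed subspace $\mathcal{X}_H$, delivering the unique discrete solution $u_H^\star$.

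The bound~\eqref{eq:exact:bounded} falls out by testing~\eqref{eq:weakform} with $v = u^\star$ and combining with~\eqref{eq:strongly-monotone} for $w = 0$:
\[
\alpha \enorm{u^\star}^2 \le \langle \mathcal{A} u^\star - \mathcal{A} 0, u^\star \rangle = \langle F - \mathcal{A} 0, u^\star \rangle \le \|F - \mathcal{A} 0\|_{\mathcal{X}'} \, \enorm{u^\star},
\]
so that $\enorm{u^\star} \le M$ after division (the case $u^\star = 0$ is excluded by the standing assumption $\mathcal{A} 0 \neq F$). The identical computation with $v_H = u_H^\star$ inserted into~\eqref{eq:weakform:discrete} bounds the discrete solution.

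For the C\'ea estimate, the starting point is Galerkin orthogonality $\langle \mathcal{A} u^\star - \mathcal{A} u_H^\star, w_H \rangle = 0$ for all $w_H \in \mathcal{X}_H$ (obtained by subtracting~\eqref{eq:weakform:discrete} from~\eqref{eq:weakform}), which together with~\eqref{eq:strongly-monotone} yields, for arbitrary $v_H \in \mathcal{X}_H$,
\[
\alpha \enorm{u^\star - u_H^\star}^2 \le \langle \mathcal{A} u^\star - \mathcal{A} u_H^\star, u^\star - u_H^\star \rangle = \langle \mathcal{A} u^\star - \mathcal{A} u_H^\star, u^\star - v_H \rangle.
\]
Here lies the main obstacle: to bound the right-hand side by $L[2M] \, \enorm{u^\star - u_H^\star} \, \enorm{u^\star - v_H}$ via~\eqref{eq:locally-lipschitz} with $\vartheta = 2M$, I have to check both $\enorm{u^\star} \le M \le 2M$ and $\enorm{u^\star - u_H^\star} \le \enorm{u^\star} + \enorm{u_H^\star} \le 2M$. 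Both are delivered precisely by the a priori bound proved in the preceding paragraph, which is the whole reason for separating the two arguments. Dividing by $\enorm{u^\star - u_H^\star}$ and taking the infimum over $v_H \in \mathcal{X}_H$ then produces~\eqref{eq:cea} with $C_{\textup{Céa}} = L[2M]/\alpha$.
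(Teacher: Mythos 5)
Your proof is correct; the paper itself does not prove this proposition but delegates to a citation of \cite[Proposition~2]{bbimp2022cost}, and your argument (Browder--Minty for existence and uniqueness, testing with the solution itself for the a~priori bound, and Galerkin orthogonality combined with~\eqref{eq:locally-lipschitz} calibrated to $\vartheta = 2M$) is exactly the standard proof behind that reference. In particular, you correctly identified the one genuine subtlety, namely that the bound $\max\{\enorm{u^\star},\enorm{u^\star - u_H^\star}\}\le 2M$ must be established \emph{before} the local Lipschitz condition can be invoked in the C\'{e}a estimate, which is what fixes the constant $L[2M]$.
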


Finally, we suppose that $\mathcal{A}$ has a potential $\mathcal{P}$: There exists a G\^{a}teaux
differentiable function $\mathcal{P}
\colon \mathcal{X} \to \mathbb{R}$ such that its derivative $\textup{d}{\mathcal{P}}\colon \mathcal{X} \to
\mathcal{X}'$ coincides with $\mathcal{A}$, i.e.,
\begin{equation}\label{eq:potential}
  \dual{\mathcal{A} w}{v} = \dual{\textup{d}{\mathcal{P}} (w)}{v} = \lim_{\substack{t \to 0 \\ t \in
  \mathbb{R}}}\, \frac{\mathcal{P}(w+tv) -
  \mathcal{P}(w)}{t} \quad \text{ for all } v, w\in \mathcal{X}. \tag{POT}
\end{equation}
With the energy $\mathcal{E} (v) \coloneqq (\mathcal{P}-F) v$, there holds the following classical equivalence.
\begin{lemma}[\phantom{}{see, e.g.,~\cite[Lemma~5.1]{ghps2018}}]\label{lemma:equivalence}
  Let $\mathcal{X}_H \subseteq \mathcal{X}$ be a closed subspace (where also $\mathcal{X}_H$ replaced by
  $\mathcal{X}$ is admissible).
  Suppose that $\mathcal{A}$ satisfies~\eqref{eq:strongly-monotone}, \eqref{eq:locally-lipschitz},
  and~\eqref{eq:potential}. Let $\vartheta \ge M$. Let $v_{H} \in \mathcal{X}_{H}$ with $\enorm{
  v_H - u^\star_{H}} \le \vartheta$. Then, it holds that
  \begin{equation}\label{eq:equivalence}
    \frac{\alpha}{2} \, \enorm{v_{H} - u^\star_{H}}^2 \le  \mathcal{E}(v_{H}) -
    \mathcal{E}(u^\star_{H}) \le  \frac{L[\vartheta]}{2}\, \enorm{v_{H} - u^\star_{H}}^2.
  \end{equation}
  In particular, the solution $u^\star_H$ of the variational formulation~\eqref{eq:weakform:discrete} is indeed the unique
  minimizer of $\mathcal{E}$ in $\mathcal{X}_H$, i.e.,
  \begin{equation}\label{eq:emin}
    \mathcal{E}(u^\star_H) \le \mathcal{E}(v_H) \quad \text{ for all } v_H \in \mathcal{X}_H.
  \end{equation}
  In particular, it holds that
  \begin{equation}\label{eq:qo:abstract}
    \mathcal{E}(v_{H}) - \mathcal{E}(u^\star) =   \big[\mathcal{E}(v_{H}) - \mathcal{E}(u^\star_{H})\big]
    +\big[\mathcal{E}(u^\star_{H}) - \mathcal{E}(u^\star)\big] \quad \text{ for all } v_H \in \mathcal{X}_H
  \end{equation}
  and all these energy differences are nonnegative. \hfill \qed
\end{lemma}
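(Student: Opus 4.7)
The plan is to represent $\mathcal{E}(v_H) - \mathcal{E}(u^\star_H)$ as a line integral along the segment joining $u^\star_H$ to $v_H$ and to sandwich this integral using \eqref{eq:strongly-monotone} from below and \eqref{eq:locally-lipschitz} from above. With $w_t := u^\star_H + t(v_H - u^\star_H)$ for $t \in [0,1]$, assumption \eqref{eq:potential} gives differentiability of $t \mapsto \mathcal{P}(w_t)$, while continuity of the derivative $t \mapsto \langle \mathcal{A} w_t, v_H - u^\star_H \rangle$ follows from \eqref{eq:locally-lipschitz}. Combining the fundamental theorem of calculus with the Galerkin identity $\langle \mathcal{A} u^\star_H, v_H - u^\star_H \rangle = \langle F, v_H - u^\star_H \rangle$, which is legitimate since $v_H - u^\star_H \in \mathcal{X}_H$, then yields
\begin{equation*}
\mathcal{E}(v_H) - \mathcal{E}(u^\star_H) = \int_0^1 \langle \mathcal{A} w_t - \mathcal{A} u^\star_H,\, v_H - u^\star_H \rangle \,\d{t}.
\end{equation*}

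The lower bound is routine: I would apply \eqref{eq:strongly-monotone} to the pair $(w_t, u^\star_H)$ and use $w_t - u^\star_H = t(v_H - u^\star_H)$ to obtain an integrand bounded below by $\alpha t \, \enorm{v_H - u^\star_H}^2$; the factor $\alpha/2$ then arises from $\int_0^1 t \, \d{t} = 1/2$.

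The upper bound is the main obstacle. A naive application of \eqref{eq:locally-lipschitz} with $v := w_t$ and $w := u^\star_H$ would require $\enorm{w_t} \le \vartheta$, for which only $\enorm{w_t} \le \enorm{u^\star_H} + t\,\enorm{v_H - u^\star_H} \le M + \vartheta \le 2\vartheta$ is available, producing the suboptimal constant $L[2\vartheta]$. The remedy is to exploit the asymmetry of \eqref{eq:locally-lipschitz} in $v$ and $w$: applying it with the reversed choice $v := u^\star_H$, $w := w_t$, and $\varphi := v_H - u^\star_H$, the side condition reads $\max\{\enorm{u^\star_H},\, t\,\enorm{v_H - u^\star_H}\} \le \max\{M, \vartheta\} = \vartheta$, so the sharp constant $L[\vartheta]$ is preserved; integration again contributes the factor $1/2$.

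The remaining claims follow immediately: \eqref{eq:emin} is a corollary of the lower bound applied with $\vartheta := \max\{M, \enorm{v_H - u^\star_H}\}$ (for arbitrary $v_H \in \mathcal{X}_H$), the decomposition \eqref{eq:qo:abstract} is by construction, and nonnegativity of both summands in \eqref{eq:qo:abstract} amounts to \eqref{eq:emin} applied on $\mathcal{X}_H$ (for $v_H$ versus $u^\star_H$) and on $\mathcal{X}$ itself (for $u^\star_H$ versus $u^\star$), which is admissible by the hypothesis of the lemma.
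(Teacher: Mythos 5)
Your proof is correct and follows the standard route that the paper delegates to the cited reference \cite[Lemma~5.1]{ghps2018} (and that the paper itself employs in Step~3 of the proof of Lemma~\ref{lemma:f4}): the fundamental-theorem-of-calculus representation along the segment, Galerkin orthogonality to replace $F$ by $\mathcal{A}u_H^\star$, and the sandwich via~\eqref{eq:strongly-monotone} and~\eqref{eq:locally-lipschitz}. Your observation that the asymmetric side condition $\max\{\enorm{v},\enorm{v-w}\}\le\vartheta$ in~\eqref{eq:locally-lipschitz} must be exploited with $v:=u_H^\star$ to obtain the sharp constant $L[\vartheta]$ (rather than $L[2\vartheta]$) is precisely the reason the paper prefers~\eqref{eq:locally-lipschitz} over~\eqref{eq:locally-lipschitz:variant}, so this is exactly the intended argument.
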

\subsection{Iterative linearization and algebraic solver}\label{subsection:innerloops}
Let $\mathcal{X}_H \subset \mathcal{X}$ be a finite-dimen\-sional (and hence closed) subspace of $\mathcal{X}$.
In order to solve the arising nonlinear discrete problems~\eqref{eq:weakform:discrete}, we will incorporate a
linearization method as well as an algebraic solver into the proposed algorithm.
\paragraph{\quad \bf Linearization by Zarantonello iteration.}
For a detailed discussion of the Zarantonello iteration, we refer to~\cite[Section~2.2--2.4]{bbimp2022cost}.
For a damping parameter \(\delta > 0\) and $w_H \in \mathcal{X}_H$, let \(\Phi_{H}(\delta;
w_H) \in \mathcal{X}_H\) solve
\begin{equation}\label{eq:Zarantonello:iteration}
  \edual{\Phi_H(\delta; w_H)}{v_H} = \edual{w_H}{v_H} + \delta \,
  \bigl[F(v_H) - \dual{\mathcal{A} w_H}{v_H}\bigr] \quad \text{ for all } v_H \in \mathcal{X}_H.
\end{equation}
The Lax--Milgram lemma proves existence and uniqueness of $\Phi_H(\delta; w_H)$, i.e., the
Zarantonello operator $\Phi_{H}(\delta; \cdot) \colon \mathcal{X}_H \to \mathcal{X}_H$ is well-defined. In
particular, $u_{H}^\star = \Phi(\delta; u_{H}^\star)$ is the unique fixed point of
$\Phi_{H}(\delta; \cdot)$ for any damping parameter $\delta > 0$.  Moreover, for sufficiently small
\(\delta>0\), the Zarantonello operator is norm-contractive.

\begin{proposition}[{see, e.g.,~\cite[Proposition~4]{bbimp2022cost}}]\label{prop:zarantonello}
  Suppose that $\mathcal{A}$ satisfies~\eqref{eq:strongly-monotone} and~\eqref{eq:locally-lipschitz}. Let $\vartheta
  > 0$ and $v_H, w_H \in \mathcal{X}_H$ with $\max \big\{ \enorm{v_H},
  \enorm{v_H-w_H} \big\} \le \vartheta$. Then, for all $0 < \delta < 2\alpha / L[\vartheta]^2$
  and $0 < q_{\textup{Zar}}^{\star}[\delta, \vartheta]^2 \coloneqq 1 - \delta\,(2\alpha - \delta
  L[\vartheta]^2) < 1$, it holds that
  \begin{equation}\label{eq:ZarantonelloExact}
    \enorm{\Phi_H(\delta; v_H) - \Phi_H(\delta; w_H)}
    \le q_{\textup{Zar}}^{\star}[\delta, \vartheta] \, \enorm{v_H - w_H}.
  \end{equation}
  We note that $q_{\textup{Zar}}^{\star}[\delta, \vartheta] \to 1$ as $\delta \to 0$. For known $\alpha$ and
  $L[\vartheta]$, the contraction constant $q_{\textup{Zar}}^{\star}[\delta, \vartheta]^2 =
  1-\alpha^2/L[\vartheta]^2 = 1-
  \alpha \, \delta $ is minimal and only attained for $\delta = \alpha/L[\vartheta]^2$. \qed
\end{proposition}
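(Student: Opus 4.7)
The plan is to subtract the two Zarantonello equations~\eqref{eq:Zarantonello:iteration} defining $v_H^+ \coloneqq \Phi_H(\delta; v_H)$ and $w_H^+ \coloneqq \Phi_H(\delta; w_H)$, yielding
$$\edual{v_H^+ - w_H^+}{\phi_H} = \edual{v_H - w_H}{\phi_H} - \delta \, \dual{\mathcal{A} v_H - \mathcal{A} w_H}{\phi_H} \quad \text{for all } \phi_H \in \mathcal{X}_H.$$
Setting $d \coloneqq v_H - w_H \in \mathcal{X}_H$ and letting $r \in \mathcal{X}_H$ denote the unique Riesz representative in $\mathcal{X}_H$ (with respect to $\edual{\cdot}{\cdot}$) of the bounded linear functional $\phi_H \mapsto \dual{\mathcal{A} v_H - \mathcal{A} w_H}{\phi_H}$, the identity above reads simply $v_H^+ - w_H^+ = d - \delta r$ in $\mathcal{X}_H$.

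Next, I would expand the squared norm and exploit the Riesz identification to re-introduce the duality bracket:
$$\enorm{v_H^+ - w_H^+}^2 = \enorm{d}^2 - 2\delta \, \edual{d}{r} + \delta^2 \enorm{r}^2 = \enorm{d}^2 - 2\delta \, \dual{\mathcal{A} v_H - \mathcal{A} w_H}{d} + \delta^2 \enorm{r}^2.$$
The strongly monotone property~\eqref{eq:strongly-monotone} provides $\dual{\mathcal{A} v_H - \mathcal{A} w_H}{d} \ge \alpha \enorm{d}^2$, while the local Lipschitz property~\eqref{eq:locally-lipschitz}, invoked exactly under the hypothesis $\max\{\enorm{v_H}, \enorm{v_H - w_H}\} \le \vartheta$ of the proposition, gives $\enorm{r} \le L[\vartheta] \, \enorm{d}$ by taking the supremum over $\phi_H \in \mathcal{X}_H$ of unit norm. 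Assembling these two bounds yields
$$\enorm{v_H^+ - w_H^+}^2 \le \bigl(1 - 2\alpha\delta + \delta^2 L[\vartheta]^2\bigr) \enorm{d}^2 = q_{\textup{Zar}}^{\star}[\delta,\vartheta]^2 \, \enorm{d}^2,$$
and the hypothesis $0 < \delta < 2\alpha/L[\vartheta]^2$ is precisely the condition that forces this factor to lie in $[0,1)$. For the optimality statement, elementary calculus on the quadratic $\delta \mapsto 1 - 2\alpha\delta + \delta^2 L[\vartheta]^2$ identifies the unique minimizer $\delta = \alpha/L[\vartheta]^2$ with minimum value $1 - \alpha^2/L[\vartheta]^2$, which coincides with $1 - \alpha\delta$ at this particular $\delta$.

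I do not foresee a genuine obstacle in this proof: it is the classical Zarantonello contraction computation, reduced to a one-line algebraic identity after the Riesz substitution. The only subtlety worth flagging is bookkeeping in the application of (LIP). The paper's formulation requires $\max\{\enorm{v}, \enorm{v-w}\} \le \vartheta$ rather than the more usual $\max\{\enorm{v}, \enorm{w}\} \le \vartheta$, and this is exactly what makes the pair $(v_H, w_H)$ under the present hypothesis fit directly into (LIP) without any additional triangle-inequality accounting — matching the motivation spelled out in the remark preceding this proposition.
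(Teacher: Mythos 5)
Your proof is correct and is precisely the classical Zarantonello contraction argument that the paper delegates to \cite[Proposition~4]{bbimp2022cost}: subtract the two defining equations, pass to the Riesz representative $r$ of $\dual{\mathcal{A}v_H - \mathcal{A}w_H}{\cdot}$ in $(\mathcal{X}_H, \edual{\cdot}{\cdot})$, expand the square, and apply \eqref{eq:strongly-monotone} and \eqref{eq:locally-lipschitz}. Your closing observation about why the hypothesis $\max\{\enorm{v_H},\enorm{v_H-w_H}\}\le\vartheta$ feeds directly into the paper's formulation of (LIP) is also exactly the point of the remark preceding the proposition.
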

\paragraph{\quad \bf Algebraic solver.}
The Zarantonello system~\eqref{eq:Zarantonello:iteration} leads to an SPD system of equations to compute
$\Phi_H(\delta; u_H)$. Since large SPD problems are still computationally expensive, we employ an
iterative algebraic solver with process function \(\Psi_H \colon \mathcal{X}' \times \mathcal{X}_H \to
\mathcal{X}_H\) to solve the arising system~\eqref{eq:Zarantonello:iteration}. More precisely, given a linear
functional \(\varphi \in \mathcal{X}'\) and an approximation \(w_H \in \mathcal{X}_H\) of the exact solutions
\(w_H^\star \in \mathcal{X}_H\) to
\begin{equation*}
  \edual{w_H^\star}{v_H} = \varphi(v_H)
  \quad \text{ for all } v_H \in \mathcal{X}_H,
\end{equation*}
the algebraic solver returns an improved approximation \(\Psi_H(\varphi; w_H) \in \mathcal{X}_H\)
in the sense that there exists a uniform constant \(0 < q_{\textup{alg}} < 1\) independent of \(\varphi\) and
\(\mathcal{X}_H\) such that
\begin{equation}\label{eq:algebraContraction}
  \enorm{w_H^\star - \Psi_H(\varphi; w_H)}
  \le
  q_{\textup{alg}} \, \enorm{w_H^\star - w_H} \quad \text{ for all } w_H \in \mathcal{X}_H.
\end{equation}
To simplify notation when the right-hand side $\varphi$ is complicated or lengthy (as for the Zarantonello
iteration~\eqref{eq:Zarantonello:iteration}), we shall write $\Psi_H(w_H^\star; \cdot)$ instead
of $\Psi_H(\varphi; \cdot)$, even though $w_H^\star$ is unknown and will never be computed.

\subsection{Mesh refinement}
Henceforth, let \(\mathcal{T}_0\) be an initial triangulation of \(\Omega\) into compact triangles. For mesh
refinement, we use newest vertex bisection (NVB); cf.~\cite{stevenson2008} for $d \ge 2$ with admissible
$\mathcal{T}_0$ as well as~\cite{kpp2013} for \(d = 2\) and~\cite{dgs2023} for $d\ge 2$ with nonadmissible
$\mathcal{T}_0$.
For $d=1$, we refer to~\cite{affkp2013}. For each triangulation \(\mathcal{T}_H\) and marked elements
\(\mathcal{M}_H \subseteq \mathcal{T}_H\), let \(\mathcal{T}_h \coloneqq \texttt{refine}(\mathcal{T}_H, \mathcal{M}_H)\)
be the coarsest refinement of \(\mathcal{T}_H\) such that at least all elements \(T \in \mathcal{M}_H\) have been
refined, i.e., \(\mathcal{M}_H  \subseteq \mathcal{T}_H \setminus \mathcal{T}_h\). We write \(\mathcal{T}_h \in
\mathbb{T}(\mathcal{T}_H)\) if \(\mathcal{T}_h\) can be obtained from \(\mathcal{T}_H\) by finitely many
steps of NVB, and,
for \(N \in \mathbb{N}_0\), we write \(\mathcal{T}_h \in \mathbb{T}_N(\mathcal{T}_H)\) if \(\mathcal{T}_h \in
\mathbb{T}(\mathcal{T}_H)\) and
\(\# \mathcal{T}_h - \# \mathcal{T}_H \le N\). To abbreviate notation, let \(\mathbb{T} \coloneqq
\mathbb{T}(\mathcal{T}_0)\). Throughout,
any \(\mathcal{T}_H \in \mathbb{T}\) is associated with a finite-dimensional space \(\mathcal{X}_H \subset
\mathcal{X}\) such
that nestedness of meshes \(\mathcal{T}_h \in \mathbb{T}(\mathcal{T}_H)\) implies nestedness of the associated spaces
\(\mathcal{X}_H \subseteq \mathcal{X}_h\).

\subsection{Axioms of adaptivity and \textsl{a~posteriori} error estimator}\label{subsection:axioms}
For $\mathcal{T}_H \in \mathbb{T}$, $T \in \mathcal{T}_H$, and $v_H \in \mathcal{X}_H$, let~$\eta_H(T,
v_H) \in \mathbb{R}_{\ge 0}$ be the local contributions of an \textsl{a~posteriori} error estimator and abbreviate
\begin{equation}\label{eq:estimator:generic}
  \eta_H(v_H) \coloneqq \eta_H(\mathcal{T}_H, v_H), \text{ where }
  \eta_H(\mathcal{U}_H, v_H)
  \coloneqq
  \Big(\! \! \sum_{T \in \,\mathcal{U}_H} \!\! \eta_H(T, v_H)^2 \Big)^{1/2}  \text{for all }
  \mathcal{U}_H \subseteq \mathcal{T}_H.
\end{equation}
We suppose that the error estimator $\eta_H$ satisfies the following axioms of adaptivity
from~\cite{axioms} with a slightly relaxed variant of stability~\eqref{axiom:stability} in the spirit
of~\cite{bbimp2022}.

\begin{enumerate}[label=\textbf{\textrm{(A\arabic*)}}, ref=A\arabic*, leftmargin=1cm]
  \item\label{axiom:stability} \textbf{stability:}
    For all $\vartheta > 0$ and all $\mathcal{U}_H \subseteq \mathcal{T}_h \cap \mathcal{T}_H$, there exists
    $C_{\textup{stab}}[\vartheta] >0$ such that for all $v_h \in \mathcal{X}_h$ and $v_H \in \mathcal{X}_H$ with
    $\max\big\{\enorm{v_h}, \enorm{v_h-v_H}\big\}  \le \vartheta$, it holds that
    \begin{equation*}
      \big| \eta_h(\mathcal{U}_H, v_h) - \eta_H(\mathcal{U}_H, v_H) \big|
      \le C_{\textup{stab}}[\vartheta]\, \enorm{v_h - v_H}.
    \end{equation*}
  \item\label{axiom:reduction} \textbf{reduction:} \rm
    With $0 < q_{\textup{red}} <1$, it holds that
    \begin{equation*}
      \eta_h(\mathcal{T}_h \backslash \mathcal{T}_H, v_H)
      \le q_{\textup{red}} \, \eta_H(\mathcal{T}_H \backslash \mathcal{T}_h, v_H) \quad \text{ for all }
      v_H \in \mathcal{X}_H.
    \end{equation*}
    \bf
  \item\label{axiom:reliability} \textbf{reliability:} \rm
    There exists $C_{\textup{rel}} >0$ such that
    \begin{equation*}
      \enorm{u^\star - u_H^\star}
      \le C_{\textup{rel}} \, \eta_H(u_H^\star).
    \end{equation*}
    \bf
  \item\label{axiom:discreteReliability} \textbf{discrete reliability:} \rm
    There exists $C_{\textup{drel}} >0$ such that
    \begin{equation*}
      \enorm{u_h^{\star} - u_H^{\star}}
      \le C_{\textup{drel}} \, \eta_H(\mathcal{T}_H \backslash \mathcal{T}_h, u_H^{\star})  .
    \end{equation*}
\end{enumerate}

\subsection{\texorpdfstring{Application of abstract framework~(\ref{eq:weakform}) to semilinear
PDEs~(\ref{eq:modelproblem})}{Application of abstract framework~(2) to semilinear PDEs~(1)}}
In the following, we comment on how the semilinear PDE~\eqref{eq:modelproblem} fits into the abstract
framework in Section~\ref{subsection:modelproblem}--\ref{subsection:axioms}. Let $\Omega \subset \mathbb{R}^d$, $d\in
\{ 1,2,3 \}$, be a bounded Lipschitz domain with polygonal boundary.
The weak formulation of the semilinear model problem~\eqref{eq:modelproblem} reads: Given $F \in
H^{-1}(\Omega)$, find $u^\star \in \mathcal{X} \coloneqq H_0^1(\Omega)$ such that
\begin{equation}\label{eq:modelproblem:weak}
  \dual{\boldsymbol{A} \, \nabla u^\star}{\nabla v}_{\Omega} + \dual{b(u^\star)}{v}_{\Omega} = \dual{F}{v}
  \quad \text{ for all } v \in H_0^1(\Omega),
\end{equation}
where $\dual{\cdot}{\cdot}_{\Omega}$ denotes the $L^2(\Omega)$-scalar product. Note
that~\eqref{eq:modelproblem:weak} coincides with~\eqref{eq:weakform}, where $\mathcal{A} u \coloneqq
\dual{\boldsymbol{A} \, \nabla u}{\nabla \,\cdot}_{\Omega} + \dual{b(u)}{\cdot}_{\Omega}$ with $u \in
\mathcal{X}$. As a means of discretization, we consider Lagrange finite element spaces of piecewise
  polynomial functions of a fixed polynomial degree $p \in \mathbb{N}_0$ on a conforming triangulation
  $\mathcal{T}_H$ of $\Omega$, namely $\mathcal{X}_H \coloneqq \mathbb{S}_0^p(\mathcal{T}_H) \coloneqq \{ v_H
    \in H_0^1(\Omega) \colon v_H|_T \text{ is polynomial of degree } \le p \text{ for all } T \in \mathcal{T}_H
  \}$. This discretization leads to nested spaces $\mathcal{X}_H \subseteq \mathcal{X}_h$ whenever $\mathcal{T}_H \in \mathbb{T}$ and $\mathcal{T}_h
\in \mathbb{T}(\mathcal{T}_H)$.
The precise assumptions on the model problem are given as follows.

\paragraph{\quad \bf Assumptions on the right-hand side.}
We suppose the following.
\begin{enumerate}[label = \textrm{(RHS)}, ref=RHS, leftmargin=1.1cm]
  \item Let $\dual{F}{v} \coloneqq \dual{f}{v}_{\Omega} + \dual{\boldsymbol{f}}{\nabla v}_{\Omega}$ with given
    $f \in L^{2}(\Omega)$ and $\boldsymbol{f} \in [L^{2}(\Omega)]^{d}$. \label{assump:rhs}
\end{enumerate} 

\paragraph{\quad \bf Assumptions on the diffusion coefficient.}
The diffusion coefficient $\boldsymbol{A}$ satisfies the following standard assumptions:
\begin{enumerate}[label = \textrm{(ELL)}, ref=ELL, leftmargin=1.1cm]
  \item $\boldsymbol{A} \in L^\infty( \Omega; \mathbb{R}_{\rm sym}^{d\times d})$, where $\boldsymbol{A}(x)$ is a
    symmetric and uniformly positive definite matrix, i.e., the minimal and maximal eigenvalues satisfy
    \begin{equation*}
      0 < \mu_0 \coloneqq \essinf \limits_{x \in \Omega} \, \lambda_{\rm min} (\boldsymbol{A}(x))
      \le \esssup_{x \in \Omega} \, \lambda_{\rm max} (\boldsymbol{A}(x)) \eqqcolon \mu_1 < \infty.
    \end{equation*}
    \label{assump:ell}
\end{enumerate}\vspace{-\baselineskip}
In particular, the $\boldsymbol{A}$-induced energy scalar product $\edual{v}{w} \coloneqq
\dual{\boldsymbol{A}\nabla v}{\nabla w}_{\Omega}$ induces an equivalent norm $\enorm{v} \coloneqq
\edual{v}{v}^{1/2}$ on $H^1_0(\Omega)$.

\paragraph{\quad \bf Assumptions on the nonlinear reaction coefficient.}
The nonlinearity $b(\cdot)$ satisfies the following assumptions from~\cite[(A1)--(A3)]{bhsz2011}:
\begin{enumerate}[label = \textrm{(CAR)}, ref=CAR, leftmargin=1.2cm]
  \item $b\colon \Omega \times \mathbb{R} \to \mathbb{R}$ is a {\bf \textit{Carathéodory} function}, i.e.,
    for all $n \in
    \mathbb{N}_0$, the $n$-th derivative  $b^{(n)} \coloneqq\partial_{\xi}^n b$ of $b$ with respect to the second
    argument $\xi$ satisfies that
    \begin{itemize}
      \item[$\triangleright$] for any $\xi \in \mathbb{R}$, the function $x \mapsto b^{(n)}(x,\xi)$ is
        measurable on $\Omega$,
      \item[$\triangleright$] for any $x \in \Omega$, the function $\xi \mapsto b^{(n)}(x,\xi)$ exists and is
        continuous in $\xi$.
    \end{itemize} \label{assump:car}
\end{enumerate}
\begin{enumerate}[label = \textrm{(MON)}, ref=MON, leftmargin=1.3cm]
  \item We assume \textbf{monotonicity} in the second argument, i.e., $b'(x, \xi)\coloneqq b^{(1)}(x, \xi)
    \ge 0$ for all $x \in \Omega$ and $\xi \in \mathbb{R}$. By considering $\tilde{b}(v)\coloneqq b(v) - b(0)$ and
    $\tilde{f} \coloneqq f - b(0)$, we assume without loss of generality that $b(x,0)=0$. \label{assump:mon}
\end{enumerate}
To establish continuity of $ v \mapsto \dual{b(v)}{w}_{\Omega}$, we impose the following \textbf{growth
condition} on $b(v)$; see, e.g., \cite[Chapter III,~(12)]{fk1980} or~\cite[(A4)]{bhsz2011}:
\begin{enumerate}[label = \textrm{(GC)}, ref=GC, leftmargin=1cm]
  \item There exist $R > 0$ and $N \in \mathbb{N}$ with $N \le 5$ for $d=3$ such that
    \begin{align*}
      |b^{(N)}(x,\xi)| \le R \quad \text{ for a.e. } x \in \Omega \text{ and all } \xi \in \mathbb{R}.
    \end{align*} \label{assump:poly}
\end{enumerate}\vspace{-\baselineskip}
These assumptions suffice to prove that the operator $\mathcal{A} \coloneqq \mathcal{X} \to \mathcal{X}' =
H^{-1}(\Omega)$ associated
with the model problem~\eqref{eq:modelproblem:weak} is strongly monotone~\eqref{eq:strongly-monotone} and
locally Lipschitz continuous~\eqref{eq:locally-lipschitz} in the sense of
Section~\ref{subsection:modelproblem}; see~\cite[Lemma~20]{bbimp2022cost}.

\paragraph{\quad \bf Energy minimization.} Associated with the semilinear model
problem~\eqref{eq:modelproblem:weak}, we consider the \textbf{energy}
\begin{equation*}\label{eq:semilinear:energy}
  \mathcal{E}(v)\! =\! \frac{1}{2} \int_\Omega \!  |\boldsymbol{A}^{1/2}\nabla v|^2  \, \textup{d}{x} +\! \int_\Omega
  \int_0^{v(x)}\! b(s)  \, \textup{d}{s} \, \textup{d}{x} -\! \int_\Omega \! f  v \, \textup{d}{x}-\!  \int_\Omega
  \!\! \boldsymbol{f} \cdot \nabla v
  \, \textup{d}{x} \, \, \text{ for } v \in H_0^1(\Omega).
\end{equation*}
To ensure the well-posedness of integrals,
we require the following stronger growth condition (guaranteeing \textbf{compactness} of the nonlinear
reaction term). Indeed, the same assumption is also required for stability~\eqref{axiom:stability} of the
residual error estimator~\eqref{eq:estimator:primal} below.
\begin{enumerate}[label=\textrm{(CGC)}, ref=CGC, leftmargin=1.25cm]
  \item There holds~\eqref{assump:poly}, if $d \in \{1,2\}$. If $d=3$, there holds~\eqref{assump:poly} with
    the stronger assumption $ N \in \{ 2, 3\}$. \label{assump:compact}
\end{enumerate}

\paragraph{\quad \bf Residual error estimator.}
To guarantee well-posedness, we additionally require that $\boldsymbol{A}|_T \in [W^{1, \infty}(T)]^{d \times
d}$ and $\boldsymbol{f}|_T \in [W^{1, \infty}(T)]^{d}$ for all $T \in \mathcal{T}_0$, where $\mathcal{T}_0$
is the initial
triangulation of the adaptive algorithm. Then, for $\mathcal{T}_H \in \mathbb{T}$ and $v_H \in \mathcal{X}_H$, the
local contributions of the standard residual error estimator~\eqref{eq:estimator:generic} for the semilinear
model problem~\eqref{eq:modelproblem:weak} read
\begin{align}
  \begin{split}\label{eq:estimator:primal}
    \eta_H(T, v_H)^2 &\coloneqq h_T^2 \,\| f + \operatorname{div}(\boldsymbol{A} \, \nabla v_H - \boldsymbol{f}) -
    b(v_H) \|_{L^2(T)}^2  \\
    & \quad + h_T \, \| \lbrack\!\lbrack (\boldsymbol{A} \, \nabla v_H - \boldsymbol{f} ) \, \cdot \,
    \boldsymbol{n} \rbrack\! \rbrack
    \|_{L^2(\partial T
    \cap \Omega)}^2,
  \end{split}
\end{align}
where $h_T = |T|^{1/d}$ and where $\lbrack\!\lbrack \,\cdot\, \rbrack\!\rbrack$ denotes the jump across edges
(for $d=2$) resp.\ faces
(for $d=3$) and $\boldsymbol{n}$ denotes the outer unit normal vector. For $d=1$, these jumps vanish, i.e.,
$\lbrack\!\lbrack \,\cdot\, \rbrack\!\rbrack = 0$. The \emph{axioms of adaptivity} are established for the
present setting in~\cite{bbimp2022}.

\begin{proposition}[\phantom{}{\cite[Proposition~15]{bbimp2022}}]
  Suppose \eqref{assump:rhs}, \eqref{assump:ell}, \eqref{assump:car}, \eqref{assump:mon},
  and \eqref{assump:compact}. Suppose that NVB is employed as a refinement strategy. Then, the residual
  error estimator from~\eqref{eq:estimator:primal}
  satisfies~\eqref{axiom:stability}--\eqref{axiom:discreteReliability} from Section~\ref{subsection:axioms}.
  The constant $C_{\textup{rel}}$ depends only on $d$,  $\mu_0$, and uniform shape regularity of the initial mesh
  $\mathcal{T}_0$. The constant $C_{\textup{drel}}$ depends, in addition, on the polynomial degree $p$, and
  $C_{\textup{stab}}[\vartheta]$
  depends furthermore on $|\Omega|$, $\vartheta$, $N$, $R$, and $\boldsymbol{A}$. \hfill \qed
\end{proposition}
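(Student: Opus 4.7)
The proof follows the classical blueprint for residual estimators for second-order elliptic PDEs, where the only genuine novelty compared to the Poisson case is the treatment of the nonlinear reaction term $b(\cdot)$. My plan is to verify each of the four axioms separately, using the growth condition \eqref{assump:compact} to reduce the nonlinear contribution to a lower-order perturbation of the standard linear residual analysis, and relying on the NVB framework for the geometric ingredients.

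For reliability \eqref{axiom:reliability} and discrete reliability \eqref{axiom:discreteReliability}, I would combine strong monotonicity \eqref{eq:strongly-monotone} with the Galerkin orthogonality $\langle \mathcal{A} u^\star - \mathcal{A} u_H^\star, v_H\rangle = 0$ for all $v_H \in \mathcal{X}_H$. Testing the residual with $u^\star - u_H^\star - I_H(u^\star - u_H^\star)$, where $I_H$ is a Scott--Zhang quasi-interpolant with local $H^1$-stability and first-order approximation, and performing elementwise integration by parts on the linear part, produces precisely the volume and jump residuals of \eqref{eq:estimator:primal} weighted by $h_T$. The nonlinear term $b(u_H^\star)$ is absorbed into the volume residual; it lies in $L^2(\Omega)$ because \eqref{assump:compact} combined with the Sobolev embedding into $L^6$ for $d=3$ makes the composition $b \circ v$ square-integrable. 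For \eqref{axiom:discreteReliability}, the same argument is repeated with $u^\star$ replaced by $u_h^\star$ and with $I_H$ chosen as the identity on elements in $\mathcal{T}_H \cap \mathcal{T}_h$, so that only contributions from $\mathcal{T}_H \setminus \mathcal{T}_h$ survive.

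Reduction \eqref{axiom:reduction} is purely geometric: NVB guarantees $|T'| \le |T|/2$ for every child $T' \subset T$ of a refined element $T \in \mathcal{T}_H \setminus \mathcal{T}_h$, so $h_{T'}^{2} \le 2^{-1/d} h_T^{2}$, and, crucially, the integrands in \eqref{eq:estimator:primal} are unchanged because $v_H \in \mathcal{X}_H \subseteq \mathcal{X}_h$. Summing the geometric factor over children yields the contraction factor $q_{\textup{red}} < 1$.

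The main obstacle is stability \eqref{axiom:stability}, and this is the only place where the $\vartheta$-dependence truly appears. After a triangle inequality, the linear volume and jump residuals are controlled by $C \, \enorm{v_h - v_H}$ via standard inverse and trace estimates on the shape-regular simplices of $\mathcal{T}_0$, with constants depending on $\mu_1$, $\|\boldsymbol{A}\|_{W^{1,\infty}}$, $\|\boldsymbol{f}\|_{W^{1,\infty}}$, and shape regularity. The delicate contribution is
\[
\Big( \sum_{T \in \mathcal{U}_H} h_T^2 \, \|b(v_h) - b(v_H)\|_{L^2(T)}^2 \Big)^{1/2}.
\]
I would handle it via the mean-value identity $b(v_h) - b(v_H) = \big(\int_0^1 b'(s v_h + (1-s) v_H) \, \textup{d}s\big) (v_h - v_H)$, bounding $b'$ through \eqref{assump:compact} and then invoking Hölder's inequality together with the embeddings $H^1(\Omega) \hookrightarrow L^q(\Omega)$ ($q = \infty$ for $d=1$, any $q < \infty$ for $d=2$, $q=6$ for $d=3$). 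The restriction $N \in \{2,3\}$ for $d=3$ in \eqref{assump:compact} is precisely what keeps the Hölder exponents admissible, and the dependence $C_{\textup{stab}}[\vartheta]$ on $\vartheta = \max\{\enorm{v_h}, \enorm{v_h - v_H}\}$ enters because $|b'|$ along the segment grows polynomially of order $N-1$ in the magnitude of the arguments, which is controlled via the Sobolev norm by $\vartheta$.
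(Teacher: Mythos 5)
The paper does not prove this proposition itself: it imports it verbatim from \cite[Proposition~15]{bbimp2022} and closes the statement with \qed. Your sketch follows exactly the standard route taken in that reference --- Scott--Zhang quasi-interpolation combined with Galerkin orthogonality and strong monotonicity for \eqref{axiom:reliability}--\eqref{axiom:discreteReliability}, pure NVB size reduction for \eqref{axiom:reduction}, and the mean-value identity with H\"older and Sobolev embeddings for the nonlinear part of \eqref{axiom:stability} --- and your identification of the restriction $N\le 3$ for $d=3$ as exactly what keeps the exponents admissible (e.g.\ $\| |v|^{N-1} w\|_{L^2}\le \|v\|_{L^{3(N-1)}}^{N-1}\|w\|_{L^6}$ needs $3(N-1)\le 6$) is precisely the role of \eqref{assump:compact}. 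Two minor imprecisions, neither of which affects validity: NVB gives $h_{T'}^2=|T'|^{2/d}\le 2^{-2/d}h_T^2$ rather than $2^{-1/d}h_T^2$ (the factor $2^{-1/d}$ is what survives for the $h_T$-weighted jump term and is what determines $q_{\textup{red}}$); and in the discrete reliability argument a quasi-interpolant can only act as the identity on elements whose entire averaging patch lies in $\mathcal{T}_H\cap\mathcal{T}_h$, so one first obtains the bound on a patch neighborhood of $\mathcal{T}_H\setminus\mathcal{T}_h$ and then reduces to $\mathcal{T}_H\setminus\mathcal{T}_h$ itself using the uniform overlap of NVB patches.
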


\paragraph{\quad \bf Algebraic solver.}
As an algebraic solver, we employ a norm-contractive solver to solve the Zarantonello
system~\eqref{eq:Zarantonello:iteration}. Possible choices are, e.g., an optimally preconditioned conjugate
gradient method~\cite{cnx2012} or an optimal geometric multigrid~\cite{wz2017, imps2022}. More precisely, the
numerical experiments below employ the $hp$-robust multigrid method from~\cite{imps2022}, which is
well-defined owing to ellipticity~\eqref{assump:ell}.


\section{Fully adaptive algorithm} \label{section:algorithm}

In this section, we present the adaptive iterative linearized finite element method (AILFEM). As a first main
result, we prove that the iterates from the proposed algorithm are uniformly bounded.

\subsection{Fully adaptive algorithm}

In this section, we introduce a fully adaptive algorithm that steers mesh refinement ($\ell$),  linearization
($k$) and the algebraic solver ($i$). The algorithm utilizes specific stopping indices denoted by an
underline, namely \(\underline{\ell},\underline{k}[\ell], \underline{i}[\ell,k]\). However, we may omit the
dependence when it is apparent
from the context, such as in the abbreviation \(u_\ell^{k, \underline{i}}\coloneqq u_\ell^{k,\underline{i}[\ell,k]}\).

\begin{algorithm}[adaptive iterative linearized FEM (AILFEM)]
  \label{algorithm:AILFEM}
  \phantom{2}\\  {\bfseries Input:} Initial mesh $ \mathcal{T}_0$, marking parameters $0 < \theta \le 1$, $C_{\rm
  mark} \ge 1$, solver parameters $\lambda_{\textup{lin}},\lambda_{\textup{alg}} > 0$, minimal number of
  algebraic solver steps $i_{\textup{min}}
  \in \mathbb{N}$, initial guess $u_0^{0,0}  \coloneqq u_0^{0,\star}\coloneqq u_{0}^{0, \underline{i}} \in
  \mathcal{X}_0$ with
  $\enorm{u_0^{0,0}} \le 2 M$, and Zarantonello damping parameter \(\delta > 0\).

  \noindent
  {\bfseries Adaptive loop:} For all $\ell = 0, 1, 2, \dots$, repeat the following steps
  \eqref{alg:solve+estimate}--\eqref{alg:refine}:
  \begin{enumerate}[label = (\Roman*), ref=\Roman*, font=\upshape]
    \item\label{alg:solve+estimate} {\tt SOLVE \& ESTIMATE.} For all $k = 1, 2, 3, \dots$, repeat steps
      \eqref{alg:phi}--\eqref{alg:terminate_k}:
      \begin{enumerate}[label = (\alph*), ref=\alph*, font=\upshape]
        \item\label{alg:phi} Define $u_\ell^{k,0} \coloneqq u_\ell^{k-1, \underline{i}}$ and, for only
          theoretical reasons,
          $u_\ell^{k,\star} \coloneqq \Phi_\ell(\delta; u_\ell^{k-1,\underline{i}})$.
        \item For all $i=1,2,3, \dots$ repeat steps \eqref{alg:solve}--\eqref{alg:terminate_i}:
          \begin{enumerate}[label = (\roman*), ref=\roman*, font=\upshape]
            \item\label{alg:solve} Compute $u_\ell^{k, i} \coloneqq \Psi_\ell(u_\ell^{k,\star};
              u_\ell^{k, i-1})$ and error estimator $\eta_\ell(u_\ell^{k,i})$.
            \item\label{alg:terminate_i} Terminate the $i$-loop and define $\underline{i}[\ell, k] \coloneqq i$ if
              \begin{equation}\label{eq:i_stopping_criterion}
                \enorm{u_\ell^{k,i-1} - u_\ell^{k, i}} \le \lambda_{\textup{alg}} \, \bigl[\lambda_{\textup{lin}}
                \,\eta_{\ell}(u_\ell^{k,i})  + \enorm{u_\ell^{k, i} - u_\ell^{k,0}} \bigr]  \quad
                \mathtt{AND} \quad i_{\textup{min}} \le i.
              \end{equation}
          \end{enumerate}
        \item\label{alg:terminate_k} Terminate the $k$-loop and define \(\underline{k}[\ell] \coloneqq k\) if
          \begin{equation}\label{eq:k_stopping_criterion}
            \mathcal{E}(u_\ell^{k, 0})- \mathcal{E}(u_\ell^{k, \underline{i}}) \le \lambda_{\textup{lin}}^2
            \, \eta_\ell(u_\ell^{k, \underline{i}})^2
            \quad  \mathtt{ AND } \quad  \enorm{u_\ell^{k, \underline{i}}} \le 2M.
          \end{equation}
      \end{enumerate}

    \item  \texttt{MARK.} Find a set $\mathcal{M}_\ell \in \mathbb{M}_\ell[\theta,
      u_\ell^{\underline{k},\underline{i}}] \coloneqq
      \{\mathcal{U}_\ell \! \subseteq \! \mathcal{T}_\ell \colon \theta \, \eta_\ell(u_\ell^{\underline{k},
        \underline{i}})^2 \le
      \eta_\ell(\mathcal{U}_\ell, u_\ell^{\underline{k}, \underline{i}})^2\}$ such that
      \begin{equation}\label{eq:doerfler}
        \# \mathcal{M}_\ell \le C_{\textup{mark}}  \min_{\mathcal{U}_\ell \in \mathbb{M}_\ell[\theta,
            u_\ell^{\underline{k},
        \underline{i}}]} \# \mathcal{U}_\ell.
      \end{equation}

    \item\label{alg:refine}  \texttt{REFINE.} Generate the new mesh $\mathcal{T}_{\ell+1} \coloneqq  {\tt refine}
      (\mathcal{M}_\ell, \mathcal{T}_\ell)$ by
      employing NVB and define $u_{\ell+1}^{0,0} \coloneqq u_{\ell+1}^{0,\underline{i}} \coloneqq u_{\ell+1}^{0,
      \star} \coloneqq u_\ell^{\underline{k},\underline{i}}$ (nested iteration).
  \end{enumerate}

  \noindent
  {\bfseries Output:}  Sequences of successively refined triangulations $\mathcal{T}_\ell$, discrete approximations
  $u_\ell^{k, i}$ and corresponding error estimators $\eta_\ell(u_\ell^{k, i})$.
\end{algorithm}

For the analysis of Algorithm~\ref{algorithm:AILFEM}, we define the countably infinite index set
\begin{equation*}
  \mathcal{Q} \coloneqq \{(\ell, k, i) \in \mathbb{N}_0^3 \colon u_\ell^{k, i} \text{
  is used in Algorithm~\ref{algorithm:AILFEM}}\},
\end{equation*}
where, for any $(\ell, 0,0)\in \mathcal{Q}$, the final indices are defined as
\begin{alignat*}{2}
  \underline{\ell} &\coloneqq \sup \{\ell \in \mathbb{N}_0 \colon (\ell, 0, 0) \in \mathcal{Q}\} &&\in
  \mathbb{N}_0 \cup \{ \infty\},
  \\
  \underline{k}[\ell] &\coloneqq \sup \{k \in \mathbb{N} \colon (\ell, k, 0) \in
  \mathcal{Q}\}&&\in \mathbb{N} \cup \{ \infty\},
  \\
  \underline{i}[\ell, k] &\coloneqq \sup \{i \in \mathbb{N} \colon (\ell, k, i) \in
  \mathcal{Q}\}&&\in \mathbb{N} \cup \{ \infty\}.
\end{alignat*}
We note, first, that these definitions are consistent with those of Algorithm~\ref{algorithm:AILFEM}, second,
that Lemma~\ref{lemma:termination-i} below proves that $\underline{i}[\ell, k] < \infty$, and, third, that hence
either \(\underline{\ell} =
\infty\) or $\underline{\ell} < \infty$ with
\(\underline{k}[\underline{\ell}] = \infty\). For all $(\ell, k, i) \in \mathcal{Q}$, we introduce the total
step counter $\vert \cdot, \cdot,
\cdot \vert$ defined by
\begin{equation*}
  |\ell, k, i| \coloneqq \#
  \{(\ell^\prime, k^\prime, i^\prime) \in \mathcal{Q} \colon \! (\ell^\prime, k^\prime, i^\prime) < (\ell,k, i) \} 
  = \sum_{\ell' = 0}^{\ell-1} \sum_{k' = 1}^{\underline{k}[\ell']}
  \sum_{i'=1}^{\underline{i}[\ell', k']} 1
  + \sum_{k' = 1}^{k-1} \sum_{i'=1}^{\underline{i}[\ell, k']} 1
  + \sum_{i'=1}^{i-1} 1.
\end{equation*}
We note that this definition provides a lexicographic ordering on \(\mathcal{Q}\).

In the later application to AILFEM for semilinear elliptic PDEs, every step of
Algorithm~\ref{algorithm:AILFEM} can be performed
in linear
complexity as the following arguments show.
\begin{itemize}
  \item[$\triangleright$] \texttt{SOLVE.} The employed algebraic solver is an $hp$-robust
    multigrid~\cite{imps2022} and hence each algebraic solver step requires only $\mathcal{O}(\#
    \mathcal{T}_\ell)$ operations.
  \item[$\triangleright$] \texttt{ESTIMATE.} The simultaneous computation of the standard error indicators
    $\eta_\ell(T, u_\ell^{k,i})$ for all $T \in \mathcal{T}_\ell$ can be done at the cost of $\mathcal{O}(\#
    \mathcal{T}_\ell)$.
  \item[$\triangleright$] \texttt{MARK.} The employed Dörfler marking (and the involved determination of
    $\mathcal{M}_\ell$) is indeed a linear complexity problem; see~\cite{stevenson2007} for $C_{\textup{mark}} = 2$
    and~\cite{pp2020} for $C_{\textup{mark}} = 1$.
  \item[$\triangleright$] \texttt{REFINE.} The refinement of $\mathcal{T}_\ell$ is based on NVB and, owing to the
    mesh-closure estimate~\cite{bdd2004, stevenson2008}, requires only linear cost $\mathcal{O}(\# \mathcal{T}_\ell)$.
\end{itemize}
Thus, the total work until and including the computation of $u_\ell^{k,i}$ is proportional to
\begin{equation}\label{eq:cost}
  \mathtt{cost}(\ell, k, i)
  \coloneqq\! \! \! \! \! \!\sum_{\substack{(\ell', k', i') \in \mathcal{Q} \\ |\ell', k', i'|\le |\ell, k,
  i|}}\! \! \! \!\! \! \# \mathcal{T}_{\ell'} =
  \sum_{\ell^\prime = 0}^{\ell-1} \sum_{k' = 1}^{\underline{k}[\ell^\prime]} \sum_{i'
  = 1}^{\underline{i}[\ell', k']} \# \mathcal{T}_{\ell^\prime}
  + \sum_{k' = 1}^{k-1} \sum_{i'=1}^{\underline{i}[\ell, k']} \# \mathcal{T}_{\ell}
  + \sum_{i'=1}^{i} \# \mathcal{T}_{\ell}.
\end{equation}

An important observation is that the algebraic solver loop always terminates.

\begin{lemma}\label{lemma:termination-i}
  Independently of the adaptivity parameters $\theta$, $\lambda_{\textup{lin}}$, and
  $\lambda_{\textup{alg}}$, the $i$-loop of
  Algorithm~\ref{algorithm:AILFEM} always terminates, i.e., $\underline{i}[\ell,k] < \infty$ for all
  $(\ell,k,0) \in \mathcal{Q}$.
\end{lemma}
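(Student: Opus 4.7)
The plan is to exploit the contraction property \eqref{eq:algebraContraction} of the algebraic solver, which immediately yields
$$\enorm{u_\ell^{k,\star} - u_\ell^{k,i}} \le q_{\textup{alg}}^{\,i} \, \enorm{u_\ell^{k,\star} - u_\ell^{k,0}} \xrightarrow{\;i \to \infty\;} 0.$$
Hence $u_\ell^{k,i} \to u_\ell^{k,\star}$ in $(\mathcal{X}_\ell, \enorm{\cdot})$, and the triangle inequality gives $\enorm{u_\ell^{k,i-1} - u_\ell^{k,i}} \to 0$. Thus the left-hand side of the stopping criterion \eqref{eq:i_stopping_criterion} tends to zero as $i \to \infty$.

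To conclude that \eqref{eq:i_stopping_criterion} is eventually satisfied, I would distinguish two cases. \textbf{Case~1:} $u_\ell^{k,\star} = u_\ell^{k,0}$. Then $u_\ell^{k,0}$ is already the exact solution of the Zarantonello system, so contractivity forces $u_\ell^{k,i} = u_\ell^{k,0}$ for all $i \in \mathbb{N}$; consequently both sides of \eqref{eq:i_stopping_criterion} vanish identically, and the criterion is met as soon as $i \ge i_{\textup{min}}$. \textbf{Case~2:} $u_\ell^{k,\star} \ne u_\ell^{k,0}$. Then the reverse triangle inequality combined with the contraction bound above yields, for all sufficiently large $i$,
$$\enorm{u_\ell^{k,i} - u_\ell^{k,0}} \ge \enorm{u_\ell^{k,\star} - u_\ell^{k,0}} - \enorm{u_\ell^{k,\star} - u_\ell^{k,i}} \ge \tfrac{1}{2}\, \enorm{u_\ell^{k,\star} - u_\ell^{k,0}} > 0.$$
Dropping the nonnegative term $\lambda_{\textup{lin}}\,\eta_\ell(u_\ell^{k,i})$, the right-hand side of \eqref{eq:i_stopping_criterion} is thus eventually bounded below by the positive constant $\tfrac{1}{2}\lambda_{\textup{alg}} \enorm{u_\ell^{k,\star} - u_\ell^{k,0}}$, while the left-hand side converges to zero. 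Hence \eqref{eq:i_stopping_criterion} holds for all sufficiently large $i$, and in particular $\underline{i}[\ell,k] < \infty$.

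There is no real obstacle here; the only subtlety is the degenerate Case~1, where the right-hand side of \eqref{eq:i_stopping_criterion} may vanish and one has to observe that the algebraic solver then reproduces the fixed point exactly, so that the left-hand side vanishes as well. Note also that this argument does not require any control on $\eta_\ell(u_\ell^{k,i})$ (nor uniform boundedness of the iterates), since the estimator contribution on the right-hand side is simply discarded.
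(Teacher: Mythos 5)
Your proof is correct and takes essentially the same route as the paper, which argues by contradiction: assuming the criterion always fails forces $\enorm{u_\ell^{k,i}-u_\ell^{k,0}}\lesssim\enorm{u_\ell^{k,i}-u_\ell^{k,i-1}}\to 0$, hence $u_\ell^{k,\star}=u_\ell^{k,0}$ and all iterates coincide, so the criterion is met at $i=i_{\textup{min}}$ — precisely the dichotomy you make explicit with your two cases. Only a cosmetic slip in Case~1: the right-hand side of~\eqref{eq:i_stopping_criterion} need not vanish (the estimator term $\lambda_{\textup{lin}}\,\eta_\ell(u_\ell^{k,i})$ remains), but the criterion holds regardless since the left-hand side is zero.
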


\begin{proof}
  We argue as in~\cite[Lemma~3.2]{bhimps2023}.
  Let $(\ell,k,0) \in \mathcal{Q}$.
  We argue by contradiction and assume that the $i$-loop stopping criterion~\eqref{eq:i_stopping_criterion}
  in Algorithm~\ref{algorithm:AILFEM}{\rm(I.b.ii)} always fails and hence $\underline{i}[\ell,k] = \infty$. By
  assumption~\eqref{eq:algebraContraction}, the algebraic solver $\Psi_\ell(u_\ell^{k,\star}; \, \cdot \,)$
  is contractive and hence convergent with limit $u_\ell^{k,\star} \coloneqq \Phi_\ell(\delta;
  u_\ell^{k-1,\underline{i}})$ from Algorithm~\ref{algorithm:AILFEM}{\rm(I.a)}. Moreover, by failure of the stopping
  criterion~\eqref{eq:i_stopping_criterion} in Algorithm~\ref{algorithm:AILFEM}{\rm (I.b.ii)}, we thus obtain that
  \begin{equation*}
    \eta_{\ell}(u_\ell^{k,i}) + \enorm{u_\ell^{k,i} - u_\ell^{k, 0}}
    \eqreff*{eq:i_stopping_criterion}\lesssim \enorm{u_\ell^{k, i} - u_\ell^{k,i-1}}
    \xrightarrow{i \to \infty} 0.
  \end{equation*}
  This yields $\enorm{u_\ell^{k,\star} - u_\ell^{k,0}} = 0$ and hence $u_\ell^{k,\star} = u_\ell^{k,i}$
  for all $i \in \mathbb{N}_0$, since the algebraic solver is contractive. Consequently, the $i$-loop stopping
  criterion~\eqref{eq:i_stopping_criterion} in Algorithm~\ref{algorithm:AILFEM}{\rm (I.b.ii)} will be
  satisfied for $i = i_{\textup{min}}$. This contradicts our assumption, and hence we conclude that
  $\underline{i}[\ell,k] < \infty$.
\end{proof}
\subsection{Energy contraction for the inexact Zarantonello iteration}
In this section, we prove uniform boundedness of the iterates $u_\ell^{k, i}$ from
Algorithm~\ref{algorithm:AILFEM}: Note that the algorithm does not compute the Zarantonello iterate
$u_\ell^{k,\star} \coloneqq \Phi_\ell(\delta; u_\ell^{k-1,\underline{i}})$ exactly, but relies on an approximation
$u_\ell^{k,\underline{i}} \approx u_\ell^{k,\star}$. We prove that this inexact Zarantonello iteration is
contractive with respect to the energy, which is the case if at least $i_{\textup{min}} \in \mathbb{N}$ steps
of the contractive
algebraic solver are performed, i.e., $\underline{i}[\ell, k] \ge i_{\textup{min}}$. In particular, a
suitable choice of the
damping parameter $\delta > 0$ and the index $i_{\textup{min}}$ are derived in the following.

\begin{theorem}\label{theorem:uniformBoundedness}
  Suppose that $\mathcal{A}$ satisfies \eqref{eq:strongly-monotone}, \eqref{eq:locally-lipschitz},
  and \eqref{eq:potential}. With $M$ from~\eqref{eq:exact:bounded},
  define $\tau \coloneqq M + 3M \big(\frac{L[3M]}{\alpha}\big)^{1/2} \ge 4M$.
  Let $\lambda_{\textup{lin}}, \lambda_{\textup{alg}} >0$ and $0 < \theta \le 1$ be arbitrary. Suppose that
  $\enorm{u_\ell^{0,0}} =
  \enorm{u_\ell^{0, \underline{i}}}\le 2M$ with $M>0$ from~\eqref{eq:exact:bounded}. Choose
  $i_{\textup{min}} \in \mathbb{N}$ such that
  \begin{equation}\label{eq:imin}
    q_{\textup{alg}}^{i_{\textup{min}}} \le 1/3.
  \end{equation}
  Then, for any choice of $\delta > 0$ satisfying $0 < \delta < \min\{\frac{1}{L[5\tau]}, \frac{2\alpha}{L[2
  \tau]^2}\}$, there exists a uniform energy contraction constant $0 < q_{\mathcal{E}} =
  q_{\mathcal{E}}[\delta, \tau]<1$
  (see~\eqref{eq:energycontr:const} below) such that the following holds.
  \begin{alignat}{5}
    &\triangleright\, \text{\rm{\bf nested iteration:}}& \enorm{u_\ell^{\underline{k}, \underline{i}}}  & \le
    2M  \quad&&
    \text{ if } (\ell, \underline{k}, \underline{i}) \in \mathcal{Q};\label{crucial:nestediteration}\\
    & \triangleright\, \text{\rm{\bf $\underline{i}$-uniform bound:}} & \enorm{u_\ell^{k,\underline{i}}} &\le
    \tau \quad &&
    \text{ if } (\ell, k, \underline{i}) \in \mathcal{Q}; \label{crucial:mbounded} \\
    & \triangleright\, \text{\rm{\bf $\mathcal{E}$-contraction:}} &  \mathcal{E}(u_\ell^{k+1, \underline{i}})
    - \mathcal{E}(u_\ell^\star)
    &\le q_{\mathcal{E}}^2 \, \big( \mathcal{E}(u_\ell^{k, \underline{i}}) - \mathcal{E}(u_\ell^\star)\big) &&\text{ if
    } (\ell, k+1, \underline{i}) \in
    \mathcal{Q}. \label{crucial:energyContraction}
    \intertext{With~\eqref{crucial:nestediteration}--\eqref{crucial:energyContraction}, we obtain for all iterates the}
    &\triangleright\, \text{\rm{\bf uniform bound:}} & \enorm{u_\ell^{k, i}} &\le 5 \tau \quad && \text{ if }
    (\ell, k, i) \in \mathcal{Q}. \label{eq:uniform:all}
  \end{alignat}
  Moreover, there exists an index $k_0=k_0[\delta, \tau, \alpha, L[3M], M] \in \mathbb{N}$ independently of the mesh
  refinement index $\ell$ such that, for all $k'\ge k_0$, the nested iteration condition $\enorm{u_\ell^{k',
  \underline{i}}} \le 2M$ in the $k$-loop stopping criterion~\eqref{eq:k_stopping_criterion} is always met.
\end{theorem}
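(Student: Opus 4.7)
The plan is to combine the energy contraction~\eqref{crucial:energyContraction}, already established above for all $(\ell,k,\underline{i}) \in \mathcal{Q}$, with the two-sided norm--energy equivalence of Lemma~\ref{lemma:equivalence} to extract an explicit geometric decay rate for $\enorm{u_\ell^{k,\underline{i}} - u_\ell^\star}$ that is uniform in the mesh level~$\ell$, and then to upgrade the qualitative bound $\enorm{u_\ell^{k,\underline{i}}} \le \tau$ to the refined bound $\le 2M$ once $k$ is large enough.

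First, I would iterate~\eqref{crucial:energyContraction} $k'$ times to obtain
\begin{equation*}
  \mathcal{E}(u_\ell^{k',\underline{i}}) - \mathcal{E}(u_\ell^\star) \le q_{\mathcal{E}}^{2k'} \bigl(\mathcal{E}(u_\ell^{0,\underline{i}}) - \mathcal{E}(u_\ell^\star)\bigr)
  \quad \text{whenever } (\ell,k',\underline{i}) \in \mathcal{Q}.
\end{equation*}
Second, since $\enorm{u_\ell^{0,\underline{i}}} \le 2M$ (by nested iteration~\eqref{crucial:nestediteration} for $\ell \ge 1$ and by the hypothesis of the theorem for $\ell = 0$) and $\enorm{u_\ell^\star} \le M$ from~\eqref{eq:exact:bounded}, the triangle inequality yields $\enorm{u_\ell^{0,\underline{i}} - u_\ell^\star} \le 3M \le \tau$. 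Applying the upper bound in~\eqref{eq:equivalence} with $\vartheta = 3M$ then gives $\mathcal{E}(u_\ell^{0,\underline{i}}) - \mathcal{E}(u_\ell^\star) \le \tfrac{9}{2} M^2 L[3M]$. Third, inserting this into the lower bound in~\eqref{eq:equivalence} (which requires only $\enorm{u_\ell^{k',\underline{i}} - u_\ell^\star} \le \vartheta$ for some $\vartheta \ge M$, e.g., $\vartheta = \tau+M$, and does not involve any Lipschitz constant) and taking square roots delivers
\begin{equation*}
  \enorm{u_\ell^{k',\underline{i}} - u_\ell^\star} \le q_{\mathcal{E}}^{k'} \cdot 3M \sqrt{L[3M]/\alpha}.
\end{equation*}
A final triangle inequality with $\enorm{u_\ell^\star} \le M$ then yields $\enorm{u_\ell^{k',\underline{i}}} \le M + q_{\mathcal{E}}^{k'} \cdot 3M \sqrt{L[3M]/\alpha}$.

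To conclude, I would choose $k_0$ as the smallest positive integer with $q_{\mathcal{E}}^{k_0} \cdot 3 \sqrt{L[3M]/\alpha} \le 1$, explicitly
\begin{equation*}
  k_0 \coloneqq \max\bigl\{1,\, \bigl\lceil \log\!\bigl(3 \sqrt{L[3M]/\alpha}\bigr) \,/\, \log(1/q_{\mathcal{E}}) \bigr\rceil\bigr\}.
\end{equation*}
Then $\enorm{u_\ell^{k',\underline{i}}} \le 2M$ for every $k' \ge k_0$ for which the iterate is defined. Since $M$, $\alpha$, $L[3M]$, $\tau$, $\delta$, and hence $q_{\mathcal{E}} = q_{\mathcal{E}}[\delta,\tau]$, are intrinsic to the continuous problem and independent of the mesh, so is $k_0$. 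I do not anticipate a real obstacle here: once the energy contraction~\eqref{crucial:energyContraction} is in hand, the remainder is routine bookkeeping of constants; the only subtle point is that the argument of $L[\cdot]$ in the upper bound of the energy equivalence can be kept fixed at $3M$ (and not at $\tau+M$) precisely because each linearization loop is started from a nested iterate satisfying $\enorm{u_\ell^{0,\underline{i}}} \le 2M$.
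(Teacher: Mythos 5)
There is a genuine gap: your proposal treats the energy contraction~\eqref{crucial:energyContraction} as ``already established above for all $(\ell,k,\underline{i})\in\mathcal{Q}$'', but it is not. It is one of the \emph{conclusions} of the theorem, and the preparatory result (Lemma~\ref{lemma:econtraction}, via Lemma~\ref{lemma:f4}) yields the contraction step $k\to k+1$ only under the hypothesis~\eqref{eq:f4:assumption} that $\enorm{u_\ell^{k,\underline{i}}}\le\tau$ --- which is precisely the $\underline{i}$-uniform bound~\eqref{crucial:mbounded} you are trying to deduce \emph{from} the contraction. This circular dependency is the crux of the theorem, and the paper resolves it by a simultaneous induction on $k$: the base case $k=0$ holds since $\enorm{u_\ell^{0,\underline{i}}}\le 2M\le\tau$; assuming~\eqref{crucial:mbounded} for $0\le k'\le k$, Lemma~\ref{lemma:econtraction} gives contraction for those indices, the chained contraction combined with the two-sided equivalence~\eqref{eq:equivalence} gives $\enorm{u_\ell^{k+1,\underline{i}}}\le M+q_{\mathcal{E}}^{k+1}(L[3M]/\alpha)^{1/2}\,3M\le\tau$ (this is exactly where the specific definition of $\tau$ enters), and this closes the induction. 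Your proposal skips this induction entirely, so the ``routine bookkeeping'' you describe has no licensed starting point.

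Beyond that, your computation of the decay $\enorm{u_\ell^{k',\underline{i}}-u_\ell^\star}\le q_{\mathcal{E}}^{k'}\,3M\sqrt{L[3M]/\alpha}$ and the resulting choice of $k_0$ is correct and coincides with Steps~2 and~4 of the paper's proof, including the observation that the argument of $L[\cdot]$ can be kept at $3M$ because each $k$-loop starts from a nested iterate with $\enorm{u_\ell^{0,\underline{i}}}\le 2M$. However, you also do not address the uniform bound~\eqref{eq:uniform:all} on \emph{all} intermediate algebraic iterates $u_\ell^{k,i}$ (not just the final ones $u_\ell^{k,\underline{i}}$); this requires a separate argument (Step~3 of the paper) that bounds $\enorm{u_\ell^{k,\star}-u_\ell^{k,0}}$ via the Zarantonello update, the local Lipschitz continuity~\eqref{eq:locally-lipschitz}, the constraint $\delta L[2\tau]<1$, and the contraction~\eqref{eq:algebraContraction} of the algebraic solver, and it cannot be read off from the energy contraction alone.
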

The main observation of the following lemma is that the uniform boundedness is passed on by the inexact
Zarantonello iteration along the $k$-loop indices.
\begin{lemma}
  \label{lemma:f4}
  Suppose that $\mathcal{A}$ satisfies \eqref{eq:strongly-monotone}, \eqref{eq:locally-lipschitz},
  and \eqref{eq:potential}. Let $\lambda_{\textup{lin}}, \lambda_{\textup{alg}} > 0$ be arbitrary and define
  $\tau \coloneqq M + 3M
  \big(\frac{L[3M]}{\alpha}\big)^{1/2} \ge 4M$. Let $k \in \mathbb{N}_0$ with $0 \le k < \underline{k}[\ell]$ and
  \begin{equation}\label{eq:f4:assumption}
    \enorm{u_{\ell}^{k, \underline{i}}} \le \tau.
  \end{equation}
  Then, for $i_{\textup{min}} \in \mathbb{N}$ satisfying~\eqref{eq:imin} and for any $0 < \delta <
  \min\{\frac{1}{L[5\tau]},
  \frac{2\alpha}{L[2 \tau]^2}\}$, it holds that
  \begin{align}
    \begin{split}\label{eq:f4}
      0 \le \Big( \frac{1}{2 \delta} - \frac{L[5\tau]}{2}\Big)\,& \enorm{u^{k+1,\underline{i}}_{{\ell}} -
      u^{k,\underline{i}}_{{\ell}}}^2
      \le  \mathcal{E}(u^{k,\underline{i}}_{{\ell}}) - \mathcal{E}(u^{k+1,\underline{i}}_{{\ell}}) \\
      &\le
      \Big( \frac{1}{\delta \,(1-q_{\textup{alg}}^{i_{\textup{min}}})} - \frac{\alpha}{2}\Big)\,
      \enorm{u^{k+1,\underline{i}}_{{\ell}} -
      u^{k,\underline{i}}_{{\ell}}}^2 \text{ for all } (\ell, k+1, \underline{i}) \in \mathcal{Q}.
    \end{split}
  \end{align}
\end{lemma}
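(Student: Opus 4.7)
The plan is to expand the energy difference via the potential, identify the Zarantonello residual as a dominant term, and then sandwich the resulting quantities between the strong monotonicity lower bound and the local Lipschitz upper bound. Abbreviate $w \coloneqq u_\ell^{k,\underline{i}}$, $w^\star \coloneqq u_\ell^{k+1,\star} = \Phi_\ell(\delta;w)$, and $\widehat{w} \coloneqq u_\ell^{k+1,\underline{i}}$. Note $u_\ell^{k+1,0} = w$ by Algorithm~\ref{algorithm:AILFEM}\textup{(I.a)}, so $\widehat{w}$ is the output of at least $i_{\textup{min}}$ algebraic solver steps starting from $w$ and targeting $w^\star$.

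Using~\eqref{eq:potential} and the fundamental theorem of calculus for the G\^ateaux-differentiable potential $\mathcal{P}$, I write
\begin{equation*}
  \mathcal{E}(\widehat{w}) - \mathcal{E}(w) = \dual{\mathcal{A}w - F}{\widehat{w}-w} + \int_0^1 \dual{\mathcal{A}(w+s(\widehat{w}-w)) - \mathcal{A}w}{\widehat{w}-w}\,\textup{d}s.
\end{equation*}
The Zarantonello definition~\eqref{eq:Zarantonello:iteration}, tested with $v_\ell = \widehat{w} - w \in \mathcal{X}_\ell$, identifies $\dual{F-\mathcal{A}w}{\widehat{w}-w} = \frac{1}{\delta}\edual{w^\star - w}{\widehat{w}-w}$, so the problem reduces to sandwiching the two quantities $\edual{w^\star-w}{\widehat{w}-w}$ and the integral, each in terms of $\enorm{\widehat{w}-w}^2$.

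For the first, I split $\edual{w^\star-w}{\widehat{w}-w} = \enorm{\widehat{w}-w}^2 + \edual{w^\star - \widehat{w}}{\widehat{w}-w}$. Algebraic contraction~\eqref{eq:algebraContraction} gives $\enorm{w^\star - \widehat{w}} \le q_{\textup{alg}}^{i_{\textup{min}}}\enorm{w^\star - w}$, while the triangle inequality forces $\enorm{w^\star - w} \le \enorm{\widehat{w}-w}/(1-q_{\textup{alg}}^{i_{\textup{min}}})$. Combined with~\eqref{eq:imin}, the Cauchy--Schwarz inequality then yields $|\edual{w^\star - \widehat{w}}{\widehat{w}-w}| \le \tfrac{1}{2}\enorm{\widehat{w}-w}^2$, and consequently
\begin{equation*}
  \tfrac{1}{2}\enorm{\widehat{w}-w}^2 \le \edual{w^\star-w}{\widehat{w}-w} \le \tfrac{1}{1-q_{\textup{alg}}^{i_{\textup{min}}}}\enorm{\widehat{w}-w}^2.
\end{equation*}
For the integrand, strong monotonicity~\eqref{eq:strongly-monotone} gives $\dual{\mathcal{A}v_s - \mathcal{A}w}{\widehat{w}-w} \ge \alpha s\enorm{\widehat{w}-w}^2$ with $v_s = w + s(\widehat{w}-w)$, which integrates to $\alpha/2 \cdot \enorm{\widehat{w}-w}^2$ as a lower bound. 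Local Lipschitz continuity~\eqref{eq:locally-lipschitz} gives the matching upper bound $L[5\tau]/2 \cdot \enorm{\widehat{w}-w}^2$, provided that $\max\{\enorm{v_s}, s\enorm{\widehat{w}-w}\} \le 5\tau$ uniformly in $s \in [0,1]$.

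The verification of this norm bound is where the hypotheses $\enorm{w}\le \tau$, the Zarantonello contraction~\eqref{eq:ZarantonelloExact} from Proposition~\ref{prop:zarantonello} with reference point $u_\ell^\star$ and $\vartheta = 2\tau$ (which requires $\delta < 2\alpha/L[2\tau]^2$), and the algebraic contraction with $q_{\textup{alg}}^{i_{\textup{min}}} \le 1/3$ enter jointly: together they control $\enorm{w^\star-w}$, $\enorm{w^\star}$, $\enorm{\widehat{w}}$, and $\enorm{\widehat{w}-w}$ purely in terms of $M$ and $\tau$, and this quantitative bookkeeping is the main technical obstacle since the LIP constant depends on the chosen threshold. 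Assembling the two sandwich estimates via the representation above produces~\eqref{eq:f4}; nonnegativity of the leftmost coefficient $\tfrac{1}{2\delta} - \tfrac{L[5\tau]}{2}$ follows from $\delta < 1/L[5\tau]$.
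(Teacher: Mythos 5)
Your proposal is correct and follows essentially the same route as the paper's proof: the fundamental theorem of calculus for the potential, the Zarantonello identity to rewrite the residual as $\frac{1}{\delta}\edual{w^\star-w}{\widehat w-w}$, the algebraic \textsl{a~posteriori} bound $\enorm{w^\star-\widehat w}\le \frac{q_{\textup{alg}}^{i}}{1-q_{\textup{alg}}^{i}}\enorm{\widehat w-w}$ combined with~\eqref{eq:imin} to sandwich that inner product between $\tfrac12\enorm{\widehat w-w}^2$ and $\tfrac{1}{1-q_{\textup{alg}}^{i_{\textup{min}}}}\enorm{\widehat w-w}^2$, and the \eqref{eq:strongly-monotone}/\eqref{eq:locally-lipschitz} sandwich for the integral term. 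The only part you leave as a sketch is the quantitative verification that $\enorm{\widehat w-w}\le 5\tau$ and $\enorm{w+s(\widehat w-w)}\le 5\tau$, but you name exactly the ingredients (Zarantonello contraction with $\vartheta=2\tau$ around $u_\ell^\star$, algebraic contraction, $\enorm{w}\le\tau$, $4M\le\tau$) that the paper's Step~2 uses to carry this out.
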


\begin{proof}
  The proof is subdivided into five steps.

  \textbf{Step 1 (choice of $i_{\min}$).} We note that for any $i_{\textup{min}}\in \mathbb{N}$, the
  property~\eqref{eq:imin} is
  indeed equivalent to
  \begin{equation}\label{eq:choice:imin}
    \frac{1}{2} \stackrel{!}\le \frac{1-2 \, q_{\textup{alg}}^{i}}{1-q_{\textup{alg}}^{i}}\quad \text{ for
    all } i \ge i_{\textup{min}}.
  \end{equation}

  \textbf{Step 2 (boundedness).}
  Define $e^{k+1}_{{\ell}} \coloneqq u^{k+1,\underline{i}}_{{\ell}} - u^{k,\underline{i}}_{{\ell}}$. Recall
  that for $0 <
  \delta <2\alpha / L[2\tau]^2$, the Zarantonello iteration satisfies
  contraction~\eqref{eq:ZarantonelloExact}. Hence, the contraction of the algebraic
  solver~\eqref{eq:algebraContraction}, the triangle inequality, nested iteration $u_{\ell}^{k+1,0} =
  u_{\ell}^{k,\underline{i}}$, assumption~\eqref{eq:f4:assumption}, and $4M \le \tau$ show that
  \begin{align*}
    \enorm{e^{k+1}_{{\ell}}}
    &\le   \enorm{u_{\ell}^{k+1, \star}\!-\! u_{\ell}^{k+1,\underline{i}}} + \enorm{u_{\ell}^{k+1, \star} \!-\!
    u_{\ell}^{k, \underline{i}}}
    \\
    &\eqreff*{eq:algebraContraction}\le q_{\textup{alg}}^{\underline{i}[\ell,
    k+1]} \,\enorm{u_{\ell}^{k+1,
    \star} \!-\! u_{\ell}^{k+1,0}} + \enorm{u_{\ell}^{k+1, \star} \!-\! u_{\ell}^{k, \underline{i}}} \notag\\
    &\le 2 \,\enorm{u_{\ell}^{k+1, \star} - u_{\ell}^{k, \underline{i}}}
    \le 2 \,\big[\enorm{u_{\ell}^\star - u_{\ell}^{k,\underline{i}}} + \enorm{u_{\ell}^\star-
    u_{\ell}^{k+1, \star}}\big] \\
    & \eqreff*{eq:ZarantonelloExact}\le 2 \,(1+q_{\textup{Zar}}^{\star}[\delta, 2\tau]) \,\enorm{u_{\ell}^\star-
    u_{\ell}^{k, \underline{i}}} \eqreff{eq:f4:assumption}\le 4(M+\tau) \le 5 \tau. \notag
  \end{align*}
  With the convexity of the norm and $\enorm{u_\ell^{k,\underline{i}}} \le \tau \le 5 \tau$, we also obtain that
  \begin{equation}\label{eq:critical}
    \enorm{u_\ell^{k+1, \underline{i}}}  \le \max_{0 \le t \le 1} \, \enorm{u^{k,\underline{i}}_{{\ell}} - t\,
    e^{k+1}_{{\ell}}} \le 5\tau.
  \end{equation}

  \textbf{Step 3.}
  Since the energy $\mathcal{E} = \mathcal{P} - F$ from~\eqref{eq:potential} is G\^{a}teaux differentiable, it
  follows that
  $\varphi(t) \coloneqq \mathcal{E}(u^{k,\underline{i}}_{{\ell}} + t\, e^{k+1}_{{\ell}})$ is differentiable with
  \begin{equation}\label{eq:phiPrime}
    \varphi^\prime(t)  = \dual{\mathrm{d} \mathcal{E}(u^{k,\underline{i}}_{{\ell}} + t\,
    e^{k+1}_{{\ell}})}{e^{k+1}_{{\ell}}} =
    \dual{\mathcal{A}(u^{k,\underline{i}}_{{\ell}} + t\, e^{k+1}_{{\ell}}) - F}{e^{k+1}_{{\ell}}}.
  \end{equation}
  The fundamental theorem of calculus and the exact Zarantonello iteration~\eqref{eq:Zarantonello:iteration} show that
  \begin{align}
    &\mathcal{E}(u^{k,\underline{i}}_{{\ell}}) - \mathcal{E}(u^{k+1,\underline{i}}_{{\ell}}) =  \varphi(0) -
    \varphi(1) \notag
    \\
    &= - \int_0^1
    \varphi^\prime(t) \, \textup{d}t \eqreff{eq:phiPrime}=  - \int_0^1
    \dual{\mathcal{A}(u^{k,\underline{i}}_{{\ell}} + t\,
    e^{k+1}_{{\ell}})-F}{e^{k+1}_{{\ell}}} \, \textup{d}t \notag\\
    & = - \int_0^1 \dual{\mathcal{A}(u^{k,\underline{i}}_{{\ell}} + t\, e^{k+1}_{{\ell}})-
    \mathcal{A}(u^{k,\underline{i}}_{{\ell}})}{e^{k+1}_{{\ell}}} \, \textup{d}t - \dual{
    \mathcal{A}(u^{k,\underline{i}}_{{\ell}})-F}{e^{k+1}_{{\ell}}}  \notag\\
    & \stackrel{\mathclap{\eqref{eq:Zarantonello:iteration}}}{=}  - \int_0^1
    \dual{\mathcal{A}(u^{k,\underline{i}}_{{\ell}} +
    t \,e^{k+1}_{{\ell}})- \mathcal{A} (u^{k,\underline{i}}_{{\ell}})}{e^{k+1}_{{\ell}}} \, \textup{d}t +
    \frac{1}{\delta}\,
    \edual{ u_{\ell}^{k+1, \star} -u_{\ell}^{k,\underline{i}}}{e^{k+1}_{{\ell}}} \label{eq:f4:aux:upper}.
  \end{align}

  \textbf{Step 4 (proof of lower bound in~(\ref{eq:f4})).} For any $i \in \mathbb{N}$ with $i \le
  \underline{i}[\ell,k]$, the
  contraction~\eqref{eq:algebraContraction} of the algebraic solver and nested iteration
  $u_{\ell}^{k, \underline{i}} = u_{\ell}^{k+1, 0}$ prove that
  \begin{align*}
    \enorm{u_{\ell}^{k+1, \star} - u_{\ell}^{k+1, \underline{i}}}
    &\eqreff*{eq:algebraContraction}\le
    q_{\textup{alg}}^{\underline{i}[\ell, k+1]} \,\enorm{u_{\ell}^{k+1, \star} - u_{\ell}^{k, \underline{i}}}
    \\
    &\le
    q_{\textup{alg}}^{i}\, \enorm{u_{\ell}^{k+1, \star} - u_{\ell}^{k+1, \underline{i}}} +
    q_{\textup{alg}}^{i}\, \enorm{u_{\ell}^{k+1,
    \underline{i}} - u_{\ell}^{k, \underline{i}}}.
  \end{align*}
  This gives rise to the \textsl{a~posteriori} estimate
  \begin{equation}\label{eq:aposteriori:alg}
    \enorm{u_{\ell}^{k+1,\star}-u_{\ell}^{k+1,\underline{i}}} \le
    \frac{q_{\textup{alg}}^{i}}{1-q_{\textup{alg}}^{i}} \,
    \enorm{u_{\ell}^{k+1,\underline{i}} - u_{\ell}^{k,\underline{i}}} =
    \frac{q_{\textup{alg}}^{i}}{1-q_{\textup{alg}}^{i}} \,  \enorm{e_{\ell}^{k+1}}.
  \end{equation}
  With~\eqref{eq:aposteriori:alg}, $i_{\textup{min}} \le i \le \underline{i}[\ell, k+1$],
  and~\eqref{eq:choice:imin}, we derive
  \begin{align}
    &\edual{ u_{\ell}^{k+1, \star} -u_{\ell}^{k,\underline{i}}}{e^{k+1}_{{\ell}}}
    = \edual{ u_{\ell}^{k+1, \underline{i}} -u_{\ell}^{k,\underline{i}}}{e^{k+1}_{{\ell}}} + \edual{
      u_{\ell}^{k+1, \star}
    -u_{\ell}^{k+1,\underline{i}}}{e^{k+1}_{{\ell}}} \notag \\
    &  = \enorm{e^{k+1}_{{\ell}}}^2 + \edual{ u_{\ell}^{k+1, \star} -u_{\ell}^{k+1,\underline{i}}}{e^{k+1}_{{\ell}}}
    \ge \enorm{e^{k+1}_{{\ell}}}^2 - \enorm{ u_{\ell}^{k+1, \star} -u_{\ell}^{k+1,\underline{i}}}\,
    \enorm{e^{k+1}_{{\ell}}} \notag \\
    & \eqreff*{eq:aposteriori:alg}\ge \, \,
    \enorm{e^{k+1}_{{\ell}}} \,\Big[  \enorm{e^{k+1}_{{\ell}}} - \frac{q_{\textup{alg}}^{i}}{1-q_{\textup{alg}}^{i}} \,
    \enorm{e_{\ell}^{k+1}}\Big]
    = \Big( \frac{1-2 \, q_{\textup{alg}}^{i}}{1-q_{\textup{alg}}^{i}} \Big)\, \enorm{e^{k+1}_{{\ell}}}^2
    \eqreff{eq:choice:imin}\ge \frac{1}{2} \, \enorm{e^{k+1}_{{\ell}}}^2 \ge 0. \label{eq:f4:aux2}
  \end{align}
  With the local Lipschitz continuity~\eqref{eq:locally-lipschitz} and~\eqref{eq:critical}, it follows
  from~\eqref{eq:f4:aux:upper} that
  \begin{align*}
    \mathcal{E}(u^{k,\underline{i}}_{{\ell}}) - \mathcal{E}(u^{k+1,\underline{i}}_{{\ell}})
    \, \,&\eqreff*{eq:locally-lipschitz}\ge \, \,
    - \Big(\int_0^1 t L[5\tau] \, \textup{d}t \Big) \,  \enorm{e_{{\ell}}^{k+1}}^2 + \frac{1}{\delta} \,\edual{
    u_{\ell}^{k+1, \star} -u_{\ell}^{k,\underline{i}}}{e^{k+1}_{{\ell}}} \notag\\
    &\eqreff*{eq:f4:aux2}\ge \, \,
    \Big[ \frac{1}{2 \, \delta}  - \frac{L[5\tau]}{2} \Big]\,   \enorm{e^{k+1}_{{\ell}}}^2.
  \end{align*}
  Since $0 <\delta < 1/L[5\tau]$, the last  expression is positive.

  {\bf Step 5 (proof of upper bound in~(\ref{eq:f4})).}
  To derive the upper equivalence constant, we infer from Step~4 that
  \begin{align*}
    \edual{ u_{\ell}^{k+1, \star} -u_{\ell}^{k,\underline{i}}}{e^{k+1}_{{\ell}}}
    &\le \enorm{e^{k+1}_{{\ell}}}^2 + \enorm{ u_{\ell}^{k+1, \star} -u_{\ell}^{k+1,\underline{i}}}\,
    \enorm{e^{k+1}_{{\ell}}} \\
    &\eqreff*{eq:aposteriori:alg}\le
    \enorm{e^{k+1}_{{\ell}}} \,\Big[  \enorm{e^{k+1}_{{\ell}}} + \frac{q_{\textup{alg}}^{i}}{1-q_{\textup{alg}}^{i}} \,
    \enorm{e_{\ell}^{k+1}}\Big] = \Big( \frac{1}{1-q_{\textup{alg}}^{i}} \Big) \enorm{e^{k+1}_{{\ell}}}^2.
  \end{align*}
  Combined with Step~3, we obtain that
  \begin{align*}
    \mathcal{E}(u^{k,\underline{i}}_{{\ell}}) &- \mathcal{E}(u^{k+1,\underline{i}}_{{\ell}})
    \stackrel{\mathclap{\eqref{eq:f4:aux:upper}}}{=}  - \int_0^1 \! \dual{\mathcal{A}(u^{k,\underline{i}}_{{\ell}} + t
    \,e^{k+1}_{{\ell}}) \!- \! \mathcal{A} (u^{k,\underline{i}}_{{\ell}})}{e^{k+1}_{{\ell}}} \, \textup{d}t +
    \frac{1}{\delta}\,
    \edual{ u_{\ell}^{k+1, \star}\!-\! u_{\ell}^{k,\underline{i}}}{e^{k+1}_{{\ell}}}  \\
    &\eqreff*{eq:strongly-monotone}\le - \Big(\int_0^1 t \alpha \, \textup{d}t \Big) \,
    \enorm{e_{{\ell}}^{k+1}}^2 + \frac{1}{\delta} \,\edual{ u_{\ell}^{k+1, \star}
    -u_{\ell}^{k,\underline{i}}}{e^{k+1}_{{\ell}}}
    \\
    &\le \Big( \frac{1}{\delta \,(1-q_{\textup{alg}}^{i_{\textup{min}}})} - \frac{\alpha}{2}\Big) \,
    \enorm{e^{k+1}_{{\ell}}}^2.
  \end{align*}
  This concludes the proof.
\end{proof}
\begin{lemma}[energy contraction]\label{lemma:econtraction}
  Suppose the assumptions of Lemma~\ref{lemma:f4}. Recall $i_{\textup{min}} \in \mathbb{N}$
  from~\eqref{eq:imin}. Then, for $0 <
  \delta <  \min\{\frac{1}{L[5\tau]}, \frac{2\alpha}{L[2 \tau]^2}\}$, it holds that
  \begin{subequations}\label{eq:energycontr}
    \begin{equation}\label{eq:energycontr:ineq}
      0 \le \mathcal{E}(u^{k+1,\underline{i}}_{{\ell}})-\mathcal{E}(u^\star_{{\ell}}) \le
      q_{\mathcal{E}}[\delta,\tau]^2 \,
      [\mathcal{E}(u^{k,\underline{i}}_{{\ell}})-\mathcal{E}(u^\star_{{\ell}})]
    \end{equation}
    with the contraction constant
    \begin{equation}\label{eq:energycontr:const}
      0 \le q_{\mathcal{E}}[\delta,\tau]^2 \coloneqq  \, 1 - \Big( \frac{1}{\delta}  - L[5\tau] \Big)\,
      \frac{(1-q_{\textup{alg}}^{i_{\textup{min}}})^2 \, \delta^2  \alpha^2}{L[2\tau]} < 1.
    \end{equation}
  \end{subequations}
  We note that $q_{\mathcal{E}}[\delta, \tau] \to 1$ as $\delta \to 0$. In particular, it holds that
  \begin{equation}\label{eq:energycontr:triangle}
    (1- q_{\mathcal{E}}[\delta,\tau]^{2})\, \big[\mathcal{E}(u^{k,\underline{i}}_{\ell}) -
    \mathcal{E}(u_{\ell}^\star)\big] \le
    \mathcal{E}(u_{\ell}^{k,\underline{i}}) - \mathcal{E}(u_{\ell}^{k+1,\underline{i}}) \le
    \mathcal{E}(u^{k,\underline{i}}_{\ell}) - \mathcal{E}(u_{\ell}^\star).
  \end{equation}%
\end{lemma}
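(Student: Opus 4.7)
The strategy is to upgrade the pointwise energy-decrease bound of Lemma~\ref{lemma:f4} into a genuine contraction by pairing it with the energy equivalence of Lemma~\ref{lemma:equivalence}. Writing
\begin{equation*}
\mathcal{E}(u_\ell^{k+1,\underline{i}}) - \mathcal{E}(u_\ell^\star) = \bigl[\mathcal{E}(u_\ell^{k,\underline{i}}) - \mathcal{E}(u_\ell^\star)\bigr] - \bigl[\mathcal{E}(u_\ell^{k,\underline{i}}) - \mathcal{E}(u_\ell^{k+1,\underline{i}})\bigr],
\end{equation*}
it suffices to lower bound the last bracket by $(1 - q_{\mathcal{E}}^2)\,[\mathcal{E}(u_\ell^{k,\underline{i}}) - \mathcal{E}(u_\ell^\star)]$. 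Lemma~\ref{lemma:f4} already furnishes
\begin{equation*}
\mathcal{E}(u_\ell^{k,\underline{i}}) - \mathcal{E}(u_\ell^{k+1,\underline{i}}) \ge \Big(\tfrac{1}{2\delta} - \tfrac{L[5\tau]}{2}\Big)\,\enorm{e^{k+1}_\ell}^2,
\end{equation*}
with positive prefactor thanks to the assumption $\delta < 1/L[5\tau]$. Hence the entire proof reduces to the reverse-type estimate $\enorm{e^{k+1}_\ell}^2 \gtrsim \mathcal{E}(u_\ell^{k,\underline{i}}) - \mathcal{E}(u_\ell^\star)$.

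To establish this, I would first test the identity~\eqref{eq:Zarantonello:iteration} defining $u_\ell^{k+1,\star}$ against $v_\ell = u_\ell^\star - u_\ell^{k,\underline{i}} \in \mathcal{X}_\ell$. Since $\langle F, v_\ell\rangle = \langle \mathcal{A} u_\ell^\star, v_\ell\rangle$ by~\eqref{eq:weakform:discrete}, the right-hand side rewrites as $\delta\,\langle \mathcal{A} u_\ell^\star - \mathcal{A} u_\ell^{k,\underline{i}}, v_\ell\rangle$, and strong monotonicity~\eqref{eq:strongly-monotone} combined with Cauchy--Schwarz yields
\begin{equation*}
\delta\alpha\,\enorm{u_\ell^\star - u_\ell^{k,\underline{i}}} \le \enorm{u_\ell^{k+1,\star} - u_\ell^{k,\underline{i}}}.
\end{equation*}
Next, I would pass from the exact Zarantonello increment to the computed increment $e^{k+1}_\ell$. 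Using nested iteration $u_\ell^{k+1,0} = u_\ell^{k,\underline{i}}$, the algebraic contraction~\eqref{eq:algebraContraction}, monotonicity of $i \mapsto q_{\textup{alg}}^i$, and $\underline{i}[\ell,k+1] \ge i_{\textup{min}}$, a triangle-inequality argument gives
\begin{equation*}
\enorm{u_\ell^{k+1,\star} - u_\ell^{k,\underline{i}}} \le q_{\textup{alg}}^{i_{\textup{min}}}\,\enorm{u_\ell^{k+1,\star} - u_\ell^{k,\underline{i}}} + \enorm{e^{k+1}_\ell},
\end{equation*}
that is, $(1 - q_{\textup{alg}}^{i_{\textup{min}}})\,\enorm{u_\ell^{k+1,\star} - u_\ell^{k,\underline{i}}} \le \enorm{e^{k+1}_\ell}$. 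Finally, the upper bound in~\eqref{eq:equivalence} applied with $\vartheta = 2\tau$ (admissible since $\enorm{u_\ell^{k,\underline{i}} - u_\ell^\star} \le \tau + M \le 2\tau$ by~\eqref{eq:f4:assumption} and~\eqref{eq:exact:bounded}, using $\tau \ge 4M$) yields $\enorm{u_\ell^{k,\underline{i}} - u_\ell^\star}^2 \ge \tfrac{2}{L[2\tau]}\bigl[\mathcal{E}(u_\ell^{k,\underline{i}}) - \mathcal{E}(u_\ell^\star)\bigr]$. Squaring and chaining the three inequalities produces the reverse estimate with an explicit constant, and inserting it into Lemma~\ref{lemma:f4} reproduces exactly the factor $1 - q_{\mathcal{E}}^2$ stated in~\eqref{eq:energycontr:const}.

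Non-negativity $\mathcal{E}(u_\ell^{k+1,\underline{i}}) \ge \mathcal{E}(u_\ell^\star)$ in~\eqref{eq:energycontr:ineq} is immediate from the minimality property~\eqref{eq:emin}; the same minimality yields the upper inequality in~\eqref{eq:energycontr:triangle}, while its lower inequality follows by subtracting~\eqref{eq:energycontr:ineq} from $\mathcal{E}(u_\ell^{k,\underline{i}}) - \mathcal{E}(u_\ell^\star)$. The main obstacle I expect is the reverse step, i.e.\ translating the exact-Zarantonello lower bound into a bound on the \emph{inexact} increment $e^{k+1}_\ell$: this is precisely where the calibration~\eqref{eq:imin} of $i_{\textup{min}}$ enters, because only then is $1 - q_{\textup{alg}}^{i_{\textup{min}}}$ bounded uniformly away from zero, independently of $\ell$ and $k$. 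The condition $\delta < 1/L[5\tau]$ keeps the prefactor from Lemma~\ref{lemma:f4} positive, thereby ensuring $q_{\mathcal{E}}^2 < 1$, while $q_{\mathcal{E}}[\delta,\tau] \to 1$ as $\delta \to 0$ is read off directly from~\eqref{eq:energycontr:const}.
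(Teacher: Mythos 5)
Your proposal is correct and follows essentially the same route as the paper: the identity~\eqref{eq:helper:equiv} obtained by testing the Zarantonello equation with $u_\ell^\star - u_\ell^{k,\underline{i}}$, the lower bound $(1-q_{\textup{alg}}^{i_{\textup{min}}})\,\enorm{u_\ell^{k+1,\star}-u_\ell^{k,\underline{i}}}\le\enorm{e_\ell^{k+1}}$ from~\eqref{eq:energyContraction:aux}, and the closing step via the upper bound in~\eqref{eq:equivalence} with $L[2\tau]$ are exactly the paper's chain of estimates, combined with the lower bound of Lemma~\ref{lemma:f4} in the same decomposition. The treatment of non-negativity via~\eqref{eq:emin} and of~\eqref{eq:energycontr:triangle} also matches the paper.
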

\begin{proof}
  First, we observe that
  \begin{align}\label{eq:helper:equiv}
    \begin{split}
      \alpha \, \enorm{u^\star_{\ell} - u^{k,\underline{i}}_{\ell}}^2 \, \,&\eqreff*{eq:strongly-monotone}\le \,
      \,\dual{\mathcal{A} u^\star_{\ell} - \mathcal{A} u^{k,\underline{i}}_{\ell}}{u^\star_{\ell} -
      u^{k,\underline{i}}_{\ell}}
      \stackrel{\eqref{eq:weakform:discrete}}{=}  \dual{F - \mathcal{A} u^{k,\underline{i}}_{\ell}}{u^\star_{\ell} -
      u^{k,\underline{i}}_{\ell}} \\
      & \stackrel{\mathclap{\eqref{eq:Zarantonello:iteration}}}{=} \,  \frac{1}{\delta}
      \,\edual{u^{k+1,\star}_{\ell} - u^{k,\underline{i}}_{\ell}}{u^\star_{\ell} - u^{k,\underline{i}}_{\ell}} \le
      \frac{1}{\delta} \, \enorm{u^{k+1,\star}_{\ell} - u^{k,\underline{i}}_{\ell}}\, \enorm{u^\star_{\ell} -
      u^{k,\underline{i}}_{\ell}}.
    \end{split}
  \end{align}
  The inverse triangle inequality and contraction~\eqref{eq:algebraContraction} of the algebraic solver prove that
  \begin{align}
    \begin{split}\label{eq:energyContraction:aux}
      \enorm{u_{\ell}^{k+1, \underline{i}} - u_{\ell}^{k, \underline{i}}}
      &\ge \enorm{u_{\ell}^{k+1, \star} - u_{\ell}^{k, \underline{i}}} -  \enorm{ u_{\ell}^{k+1,
      \star}-u_{\ell}^{k+1, \underline{i}}}\\
      &\eqreff*{eq:algebraContraction}{\ge}
      \, \, (1-q_{\textup{alg}}^{i_{\textup{min}}}) \, \enorm{u_{\ell}^{k+1, \star} - u_{\ell}^{k, \underline{i}}}
      \eqreff{eq:helper:equiv}\ge (1-q_{\textup{alg}}^{i_{\textup{min}}}) \, \delta \, \alpha\,
      \enorm{u_{\ell}^{\star} -
      u_{\ell}^{k, \underline{i}}}.
    \end{split}
  \end{align}
  Since $0 < \delta <  \min\{\frac{1}{L[5\tau]}, \frac{2\alpha}{L[2 \tau]^2}\}$, it follows that
  \begin{align*}
    0 \,&\stackrel{\mathclap{\eqref{eq:equivalence}}}{\le} \,\mathcal{E}(u^{k+1,\underline{i}}_{{\ell}}) -
    \mathcal{E}(u^{\star}_{{\ell}})
    =
    \mathcal{E}(u^{k,\underline{i}}_{{\ell}}) - \mathcal{E}(u^{\star}_{{\ell}}) -
    \big[\mathcal{E}(u^{k,\underline{i}}_{{\ell}}) -
    \mathcal{E}(u^{k+1,\underline{i}}_{{\ell}}) \big] \\
    &\stackrel{\mathclap{\eqref{eq:f4}}}{\le}
    \, \mathcal{E}(u^{k,\underline{i}}_{{\ell}}) - \mathcal{E}(u^{\star}_{{\ell}}) - \Big( \frac{1}{2\delta}
      - \frac{L[5\tau]}{2}
    \Big)\,  \enorm{u_{{\ell}}^{k+1,\underline{i}}-u_{{\ell}}^{k,\underline{i}}}^2 \\
    &  \stackrel{\mathclap{\eqref{eq:energyContraction:aux}}}{\le} \,
    \mathcal{E}(u^{k, \underline{i}}_{{\ell}}) - \mathcal{E}(u^{\star}_{{\ell}}) - \Big( \frac{1}{2\delta}
      - \frac{L[5\tau]}{2}
    \Big)\, (1-q_{\textup{alg}}^{i_{\textup{min}}})^2 \, \delta^2 \, \alpha^2 \,
    \enorm{u_{{\ell}}^{\star}-u_{{\ell}}^{k,\underline{i}}}^2 \\
    &   \stackrel{\mathclap{\eqref{eq:equivalence}}}{\le}
    \, \Big( 1 - \big[ 1 - \delta\,L[5\tau]\big]\, \frac{(1-q_{\textup{alg}}^{i_{\textup{min}}})^2 \,  \alpha^2\,
    \delta}{L[2\tau]}\Big)\,\big[\mathcal{E}(u^{k,\underline{i}}_{{\ell}}) -  \mathcal{E}(u^{\star}_{{\ell}})\big] \\
    &\eqqcolon q_{\mathcal{E}}[\delta, \tau]^2 \, \big[\mathcal{E}(u^{k,\underline{i}}_{{\ell}}) -
    \mathcal{E}(u^{\star}_{{\ell}})\big].
  \end{align*}
  We may rewrite $q_{\mathcal{E}}[\delta, \tau]^2 = 1 - C \delta + C\, L[5\tau] \,\delta^2 $ with $C =
  \frac{(1-q_{\textup{alg}}^{i_{\textup{min}}})^2 \,  \alpha^2}{L[2\tau]}$. Since $0 < \delta <
  \min\{\frac{1}{L[5\tau]},
  \frac{2\alpha}{L[2 \tau]^2}\} \le \frac{1}{L[5\tau]}$, we obtain that $0<q_{\mathcal{E}}[\delta,\tau] < 1$. This
  proves~\eqref{eq:energycontr}. The lower inequality in~\eqref{eq:energycontr:triangle} follows from the
  triangle inequality. The upper inequality in~\eqref{eq:energycontr:triangle} holds due to $0 \le
  \mathcal{E}(u_\ell^{k+1, \underline{i}})-\mathcal{E}(u_\ell^\star)$ and hence $\mathcal{E}(u_\ell^{k,
  \underline{i}})-\mathcal{E}(u_\ell^{k+1, \underline{i}}) =
  \mathcal{E}(u_\ell^{k, \underline{i}})-\mathcal{E}(u_\ell^\star) +
  \mathcal{E}(u_\ell^{\star})-\mathcal{E}(u_\ell^{k+1, \underline{i}}) \le
  \mathcal{E}(u_\ell^{k, \underline{i}})-\mathcal{E}(u_\ell^\star)$. This concludes the proof.
\end{proof}

\begin{proof}[\textbf{\textit{Proof of Theorem~\ref{theorem:uniformBoundedness}}}]
  The proof consists of four steps.

  \textbf{Step 1 (proof of~(\ref{crucial:mbounded})--(\ref{crucial:energyContraction}) for $\boldsymbol{k=0}$ and all
  $\boldsymbol{\ell \in \mathbb{N}_0}$).}
  Let $\ell\in \mathbb{N}_0$ with $\ell \le \underline{\ell}$ be arbitrary, but fixed. From the initial guess
  $u_0^{0,0}$ or
  Algorithm~\ref{algorithm:AILFEM}{\rm (I.c)} and $u_\ell^{0,\underline{i}} =u_\ell^{0,0}=
  u_{\ell-1}^{\underline{k},\underline{i}}$
  for any $\ell \in \mathbb{N}$, we have that $\enorm{u_\ell^{0,0}} \le 2M$ and \textsl{a~fortiori}
  $\enorm{u_\ell^{0,0}}\le\tau$. This proves~\eqref{crucial:mbounded} for $k=0$ and all $\ell \in \mathbb{N}_0$ with
  $\ell \le \underline{\ell}$ (even with the stronger bound $2M \le \tau$).

  In particular, we may apply Lemma~\ref{lemma:econtraction} to obtain that $\mathcal{E}(u_\ell^{1, \underline{i}}) -
  \mathcal{E}(u_\ell^\star) \le q_{\mathcal{E}}[\delta, \tau]^2 \, \big[ \mathcal{E}(u_\ell^{0,\underline{i}}) -
  \mathcal{E}(u_\ell^\star) \big]$,
  which proves~\eqref{crucial:energyContraction} for $k=0$ and $\ell \in \mathbb{N}_0$.

  \textbf{Step 2 (proof of~(\ref{crucial:mbounded})--(\ref{crucial:energyContraction}) for $\boldsymbol{k\ge0}$ and all
  $\boldsymbol{\ell \in \mathbb{N}_0}$).}
  Let $\ell\in \mathbb{N}_0$ with $\ell \le \underline{\ell}$. We argue by induction on $k$, where Step~1
  proves the base case
  $k=0$. Hence, we may assume that boundedness~\eqref{crucial:mbounded} holds for all $0\le k' \le k$.
  Lemma~\ref{lemma:econtraction} applied separately for all $0 \le k' \le k$ yields energy
  contraction~\eqref{eq:energycontr} for the indices $0 \le k' \le k$. Overall, we obtain that
  \begin{equation}\label{eq:induction:2}
    \mathcal{E}(u_\ell^{k+1, \underline{i}}) - \mathcal{E}(u_\ell^\star)
    \eqreff*{eq:energycontr}\le q_{\mathcal{E}}[\delta, \tau]^2 \, \big[ \mathcal{E}(u_\ell^{k,\underline{i}}) -
    \mathcal{E}(u_\ell^\star) \big]
    \eqreff{eq:energycontr}\le
    q_{\mathcal{E}}[\delta, \tau]^{2(k+1)} \, \big[ \mathcal{E}(u_\ell^{0,\underline{i}}) -
    \mathcal{E}(u_\ell^\star) \big],
  \end{equation}
  where we only used energy contraction~\eqref{eq:energycontr} for $0 \le k' \le k$, i.e., for indices that
  are covered by the induction hypothesis. From~\eqref{eq:induction:2}, $\enorm{u_\ell^\star}\le M$
  from~\eqref{eq:exact:bounded}, and $\enorm{u_\ell^{0, \underline{i}}} \le 2M$ and  $u_\ell^{0,\underline{i}} =
  u_\ell^{0,0}$ from Step~1, we obtain that
  \begin{align}
    \begin{split}\label{eq:step}
      \enorm{u_\ell^{k+1,\underline{i}}}
      &\le \enorm{u_\ell^\star} + \enorm{u_\ell^\star - u_\ell^{k+1,\underline{i}}} \\
      &\eqreff*{eq:equivalence}\le M+ \Big(\frac{2}{\alpha}\Big)^{1/2} \,
      \big[\mathcal{E}(u_\ell^{k+1,\underline{i}})-
      \mathcal{E}(u_\ell^\star)\big]^{1/2}\\
      &\eqreff*{eq:induction:2}\le M + q_{\mathcal{E}}^{k+1} \, \Big(\frac{2}{\alpha}\Big)^{1/2} \,
      \big[\mathcal{E}(u_\ell^{0,\underline{i}})- \mathcal{E}(u_\ell^\star)\big]^{1/2} \\
      & \eqreff*{eq:equivalence}\le
      M+
      q_{\mathcal{E}}^{k+1} \, \Big(\frac{L[3M]}{\alpha}\Big)^{1/2} \,\enorm{u_\ell^\star -
      u_\ell^{0,\underline{i}}} \le M+
      q_{\mathcal{E}}^{k+1} \, \Big(\frac{L[3M]}{\alpha}\Big)^{1/2} \,3M \le \tau.
    \end{split}
  \end{align}
  Thus, boundedness~\eqref{crucial:mbounded} is satisfied for $0 \le k' \le k+1$. Again, Lemma~8 yields
  energy contraction for $0 \le k' \le k+1$. This completes the induction argument and concludes
  that~\eqref{crucial:mbounded}--\eqref{crucial:energyContraction} hold for all $\ell \in \mathbb{N}_0$ and
  all $k \in \mathbb{N}_0$.

  \textbf{Step~3 (uniform boundedness).}
  Contraction of the algebraic solver~\eqref{eq:algebraContraction}, the straightforward estimate from the
  exact Zarantonello iteration~\eqref{eq:Zarantonello:iteration}, $\enorm{u^\star} \le M \le \tau$
  from~\eqref{eq:exact:bounded}, $\enorm{u_\ell^{k,0}} \le \tau$ from~\eqref{crucial:mbounded}, and the
  constraint $\delta < \min\{1/L[5\tau], 2 \alpha/L[2\tau]^2\}$ which ensures that $\delta L[2\tau] \le
  \delta L[5\tau] < 1$, yield that
  \begin{equation*}
    \enorm{u_\ell^{k,\star} - u_\ell^{k,0}}
    =  \enorm{\Phi_\ell(\delta; u_\ell^{k,0}) - u_\ell^{k,0}}
    \le \delta  \,\| F -\mathcal{A}(u_\ell^{k,0}) \|_{\mathcal{X}'}
    \eqreff*{eq:locally-lipschitz}\le \, \,
    \delta  \, L[2 \tau] \,\enorm{u^\star -u_\ell^{k,0}} < 2 \tau.
  \end{equation*}
  With $\enorm{u_\ell^{k, \star}} \le \enorm{u_\ell^{k, 0}} + \enorm{u_\ell^{k, \star}- u_\ell^{k, 0}} \le
  3 \tau$ owing to~\eqref{crucial:nestediteration}, it follows that
  \begin{align*}
    \enorm{u_\ell^{k,i}}
    \eqreff{eq:algebraContraction}\le \enorm{u_\ell^{k,\star}} + q_{\textup{alg}}^i \, \enorm{u_\ell^{k,\star} -
    u_\ell^{k,0}} \le 5\tau \quad \text{ for all } (\ell, k, i) \in \mathcal{Q}.
  \end{align*}

  \textbf{Step 4 (existence of $\boldsymbol{k_0}$).}
  Let $\ell\in \mathbb{N}_0$ with $\ell \le \underline{\ell}$. As in~\eqref{eq:step} from Step~2, we obtain
  \begin{equation*}
        \enorm{u_\ell^{k,\underline{i}}}
      \le M+
    q_{\mathcal{E}}^{k} \,\Big( \frac{L[3M]}{\alpha}\Big)^{1/2} \,3M.\label{eq:nested:aux}
  \end{equation*}
  Clearly, there exists a minimal integer $k_0=k_0[q_{\mathcal{E}}, \alpha, L[3M]]=k_0[\delta, \tau, \alpha, L[3M], M]
  \in \mathbb{N}$ such that, for all $k \ge k_0$, it holds that
  \[
    M+
    q_{\mathcal{E}}^{k} \,\Big( \frac{L[3M]}{\alpha}\Big)^{1/2} \,3M \le 2M.
  \]
  In particular, $k_0$ is independent of the mesh level $\ell$ and $\enorm{u_\ell^{k, \underline{i}}} \le 2M$ for all
  $k_0 \le k \le \underline{k}[\ell]$. This concludes the proof.
\end{proof}
\begin{remark}  \label{rem:f4andUniform}
  {\rm (i)} According to uniform boundedness~\eqref{eq:uniform:all}, all involved Lipschitz constants or
  stability constants are uniformly bounded by $L[10\tau]$ and $C_{\textup{stab}}[10\tau]$, respectively.

  {\rm (ii)} Under the assumption that $0 < \delta <  \min\{\frac{1}{L[5\tau]}, \frac{2\alpha}{L[2
  \tau]^2}\}$, energy contraction~\eqref{crucial:energyContraction} and the lower bound in the norm-energy
  equivalence~\eqref{eq:f4} are even equivalent, i.e.,
  \begin{equation*}
    \eqref{crucial:energyContraction} \quad \iff \quad     \eqref{eq:f4}.
  \end{equation*}
  To see this, recall that the proof of energy contraction~\eqref{crucial:energyContraction} in
  Lemma~\ref{lemma:econtraction} exploits~\eqref{eq:f4}. The converse implication is obtained as follows:
  First, energy contraction yields
  \begin{equation}
    \begin{aligned}
      \mathcal{E}(u^{k+1,\star}_\ell) - \mathcal{E}(u_\ell^\star)
      &\le
      q_{\mathcal{E}}^2 \big[   \mathcal{E}(u^{k,\underline{i}}_\ell) - \mathcal{E}(u_\ell^\star)\big]
      \\
      &=
      q_{\mathcal{E}}^2 \,\big\{ \big[
        \mathcal{E}(u^{k,\underline{i}}_\ell) - \mathcal{E}(u_\ell^{k+1, \star})\big] \!+\!
        \big[\mathcal{E}(u_\ell^{k+1, \star})-
      \mathcal{E}(u_\ell^\star)\big] \big\}
    \end{aligned}
  \end{equation}
  which gives rise to the \textsl{a~posteriori} estimate
  \begin{align}\label{eq:remark:energyDifference}
    0 \le   \mathcal{E}(u^{k+1,\star}_\ell) - \mathcal{E}(u_\ell^\star) \le
    \frac{q_{\mathcal{E}}^2}{1-q_{\mathcal{E}}^2} \, \big[
      \mathcal{E}(u_\ell^{k,
    \underline{i}}) - \mathcal{E}(u^{k+1,\star}_\ell)\big].
  \end{align}
  In particular, we note that the energy difference on the right-hand side is nonnegative. Exploiting uniform
  boundedness~\eqref{crucial:mbounded}, the last inequality yields that
  \begin{align*}
    \enorm{u_\ell^{k+1, \star} - u_\ell^{k, \underline{i}}}^2
    &\lesssim
    \enorm{u_\ell^{\star}- u_\ell^{k+1, \star}}^2 + \enorm{u_\ell^{\star} -u_\ell^{k, \underline{i}}}^2
    \\
    &\eqreff*{eq:Zarantonello:iteration}\le
    \big(1+(q_{\textup{Zar}}^{\star}[\delta, 2\tau])^2\big) \,  \enorm{u_\ell^{\star} - u_\ell^{k, \underline{i}}}^2\\
    &\eqreff*{eq:equivalence}\lesssim
    \big[\mathcal{E}(u_\ell^{k, \underline{i}}) - \mathcal{E}(u_\ell^{k+1,\star})\big] +
    \big[\mathcal{E}(u_\ell^{k+1,\star})
    -\mathcal{E}(u_\ell^{\star})\big]
    \\
    &\eqreff*{eq:remark:energyDifference}\le
    \frac{1}{1-q_{\mathcal{E}}^2} \,\big[\mathcal{E}(u_\ell^{k, \underline{i}}) - \mathcal{E}(u^{k+1,\star}_\ell)\big].
  \end{align*}
  This concludes the argument.
\end{remark}

\begin{remark}\label{remark:stoppingAndMark}
  {\rm (i)} The stopping criteria~\eqref{eq:i_stopping_criterion} and~\eqref{eq:k_stopping_criterion} read schematically
  \begin{equation*}
    [\mathtt{accuracy\,criterion}] \quad \mathtt{ AND } \quad [\mathtt{iteration\,criterion}].
  \end{equation*}

  {\rm (ii)} The accuracy criterion in~\eqref{eq:k_stopping_criterion} is heuristically motivated by the fact
  that the discretization error (estimated by $\eta_\ell(\cdot)$) shall dominate the linearization error
  \begin{equation}\label{eq:remark:linearization}
    \begin{aligned}
      \frac{\alpha}{2}\,\enorm{u_\ell^\star - u_\ell^{k+1, \underline{i}}}^2
      &\eqreff*{eq:equivalence}\le \,
      \mathcal{E}(u_\ell^{k+1,\underline{i}}) - \mathcal{E}(u_\ell^\star)
      \\
      &\eqreff*{crucial:energyContraction}\le \,
      \frac{q_{\mathcal{E}}^2}{1-q_{\mathcal{E}}^2}\,   \big[ \mathcal{E}(u_\ell^{k,\underline{i}}) -
      \mathcal{E}(u_\ell^{k+1,\underline{i}}) \big] \!
      \eqreff{eq:k_stopping_criterion}\lesssim  \! \lambda_{\textup{lin}}^2 \, \eta_\ell(u_\ell^{k+1, \underline{i}})^2.
    \end{aligned}
  \end{equation}
  This allows \textsl{a~posteriori} error control over the linearization error by means of computable energy
  differences.

  {\rm (iii)} The accuracy criterion~\eqref{eq:i_stopping_criterion} is satisfied given that the
  discretization and linearization error dominate the algebraic error in the sense of
  \begin{equation}\label{eq:algebraic:estimate}
    \begin{aligned}
      \enorm{u_\ell^{k,\star} - u_\ell^{k, i}}
      &\eqreff*{eq:algebraContraction}\le \,
      \frac{q_{\textup{alg}}}{1-q_{\textup{alg}}} \,\enorm{u_\ell^{k, i} - u_\ell^{k, i-1}}
      \\
      &\eqreff*{eq:i_stopping_criterion}\le \frac{q_{\textup{alg}}}{1-q_{\textup{alg}}}  \,
      \lambda_{\textup{alg}} \, \big[ \lambda_{\textup{lin}} \,
      \eta_\ell(u_\ell^{k, i}) + \enorm{u_\ell^{k,i} - u_\ell^{k,0}} \big].
    \end{aligned}
  \end{equation}
  Once the $i$-loop is stopped, the equivalence~\eqref{eq:f4} and nested iteration $u_\ell^{k,0} =
  u_\ell^{k-1, \underline{i}}$ yield $ \enorm{u_\ell^{k,\underline{i}} - u_\ell^{k,0}}^2 =
  \enorm{u_\ell^{k,\underline{i}} -
  u_\ell^{k-1,\underline{i}}}^2 \simeq \mathcal{E}(u_\ell^{k-1,\underline{i}}) - \mathcal{E}(u_\ell^{k,\underline{i}})$.
\end{remark}



\section{Full R-linear convergence}\label{sec:Rlinear}

We prove full R-linear convergence of Algorithm~\ref{algorithm:AILFEM} by adapting the analysis
of~\cite{hpsv2021,fps2023}. The new result extends~\cite[Theorem~13]{bbimp2022cost}, where an exact solve for
the Zarantonello iteration~\eqref{eq:Zarantonello:iteration} is supposed. The new proof is built on a
summability argument, but the stopping
criteria~\eqref{eq:i_stopping_criterion}--\eqref{eq:k_stopping_criterion} with iteration count criteria
require further analysis to prove full R-linear convergence even (and unlike~\cite{hpsv2021, fps2023}) for
arbitrary adaptivity parameters $0 < \theta \le 1$, $\lambda_{\textup{lin}}>0 $ and $\lambda_{\textup{alg}}> 0$.

\begin{theorem}[full R-linear convergence of Algorithm~\ref{algorithm:AILFEM}]\label{theorem:RLinearConvergence}
  Suppose the assumptions of Theorem~\ref{theorem:uniformBoundedness}. Suppose the axioms of
  adaptivity~\eqref{axiom:stability}--\eqref{axiom:reliability}. Let $\lambda_{\textup{lin}},
  \lambda_{\textup{alg}} > 0$, $0 < \theta
  \le 1$, $C_{\textup{mark}} \ge 1$, and $u_0^{0,0} \in \mathcal{X}_0$ with $\enorm{u_0^{0,0}} \le 2M$. Then,
  Algorithm~\ref{algorithm:AILFEM} guarantees full R-linear convergence of the quasi-error
  \begin{equation}\label{eq:quasi-error}
    \textup{H}_\ell^{k,i}
    \coloneqq
    \enorm{u_\ell^\star - u_\ell^{k,i}} + \enorm{u_\ell^{k,\star} - u_\ell^{k,i}}
    + \eta_\ell(u_\ell^{k,i}),
  \end{equation}
  i.e., there exist constants $0 < q_{\textup{lin}} < 1$ and $C_{\textup{lin}} > 0$ such that
  \begin{equation}\label{eq:RLinearConvergence}
    \!\! \!\textup{H}_\ell^{k,i}
    \! \le \!
    C_{\textup{lin}} q_{\textup{lin}}^{|\ell,k,i| - |\ell'\!,k'\!,i'|} \, \textup{H}_{\ell'}^{k',i'}
    \text{ for all }  \!(\ell'\!,k'\!,i'),(\ell,k,i) \in \mathcal{Q}
    \text{ with } |\ell'\!,k'\!,i'| \! < \! |\ell,k,i|.
  \end{equation}
  The constant $q_{\textup{lin}}$ depends only on $\theta$, $q_{\textup{red}}$ from~\eqref{axiom:reduction},
  $q_{\textup{Zar}}^{\star}[\delta,
  2\tau]$ from Proposition~\ref{prop:zarantonello}, $q_{\mathcal{E}}$ from Theorem~\ref{theorem:uniformBoundedness}, and
  $q_{\textup{alg}}$ from~\eqref{eq:algebraContraction}. The constant $C_{\textup{lin}}$ depends only on $M$, $\alpha$,
  $C_{\textup{Céa}}[2M]$, $q_{\textup{Zar}}^{\star}[\delta; 2\tau]$, $\lambda_{\textup{lin}}$, $q_{\textup{alg}}$,
  $\lambda_{\textup{alg}}$, $C_{\textup{rel}}$, $C_{\textup{stab}}[10\tau]$, and $i_{\textup{min}}$.
\end{theorem}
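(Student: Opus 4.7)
The plan is to establish full R-linear convergence via a tail-summability criterion: one shows that there exists a constant $C_{\textup{sum}}>0$ such that
\[
\sum_{\substack{(\ell',k',i') \in \mathcal{Q} \\ |\ell',k',i'|>|\ell,k,i|}} \textup{H}_{\ell'}^{k',i'}
\le C_{\textup{sum}} \, \textup{H}_\ell^{k,i}
\quad \text{for all } (\ell,k,i) \in \mathcal{Q},
\]
which, by the standard equivalence between geometric tail-summability and R-linear convergence (as exploited in~\cite{fps2023}), yields~\eqref{eq:RLinearConvergence} with $q_{\textup{lin}}$ and $C_{\textup{lin}}$ depending on the listed constants. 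The quasi-error $\textup{H}_\ell^{k,i}$ decomposes naturally into three pieces reflecting the three nested loops, and the strategy is to produce one geometric contraction for each loop and then glue them together.

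The three contraction ingredients are the following. For the algebraic (innermost) loop, the solver contraction~\eqref{eq:algebraContraction} gives $\enorm{u_\ell^{k,\star}-u_\ell^{k,i}} \le q_{\textup{alg}}^{i-i'}\,\enorm{u_\ell^{k,\star}-u_\ell^{k,i'}}$, and the stopping criterion~\eqref{eq:i_stopping_criterion} produces the a posteriori bound~\eqref{eq:aposteriori:alg} of the algebraic error by $\lambda_{\textup{lin}}\eta_\ell(u_\ell^{k,\underline{i}})+\enorm{u_\ell^{k,\underline{i}}-u_\ell^{k,0}}$. For the linearization (middle) loop, the energy contraction~\eqref{crucial:energyContraction} from Theorem~\ref{theorem:uniformBoundedness} combined with the norm–energy equivalence~\eqref{eq:equivalence}, admissible thanks to the uniform boundedness~\eqref{eq:uniform:all} and the uniform Lipschitz constant $L[10\tau]$ (cf.~Remark~\ref{rem:f4andUniform}), yields $\enorm{u_\ell^\star - u_\ell^{k,\underline{i}}} \le C\,q_{\mathcal{E}}^{k-k'}\enorm{u_\ell^\star-u_\ell^{k',\underline{i}}}$; the $k$-loop accuracy criterion~\eqref{eq:k_stopping_criterion} equilibrates the linearization error with $\lambda_{\textup{lin}}\eta_\ell(u_\ell^{k,\underline{i}})$. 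For the mesh (outermost) loop, Dörfler marking~\eqref{eq:doerfler} together with estimator reduction~\eqref{axiom:reduction} and stability~\eqref{axiom:stability} applied at the exact discrete solutions, combined with reliability~\eqref{axiom:reliability}, gives contraction of $\Delta_\ell^2 \coloneqq \enorm{u^\star-u_\ell^\star}^2+\gamma\,\eta_\ell(u_\ell^\star)^2$ from level to level. Since a plain Pythagoras identity is unavailable in the nonlinear setting, I would substitute the energy Pythagoras identity
\[
\mathcal{E}(u_\ell^\star)-\mathcal{E}(u^\star)
= \bigl[\mathcal{E}(u_\ell^\star)-\mathcal{E}(u_{\ell+1}^\star)\bigr] + \bigl[\mathcal{E}(u_{\ell+1}^\star)-\mathcal{E}(u^\star)\bigr],
\]
whose two nonnegative summands, together with the upper and lower bounds of Lemma~\ref{lemma:equivalence}, yield the quasi-Pythagoras $\enorm{u^\star-u_{\ell+1}^\star}^2+\enorm{u_\ell^\star-u_{\ell+1}^\star}^2 \le \frac{L[10\tau]}{\alpha}\,\enorm{u^\star-u_\ell^\star}^2$ needed to close the estimator-reduction argument.

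To prove tail-summability, I would then split the tail $\sum_{|\ell',k',i'|>|\ell,k,i|}$ into an algebraic tail (same $(\ell,k)$, larger $i'$), a linearization tail (same $\ell$, larger $k'$), and a mesh tail ($\ell'>\ell$), and bound each by a geometric series with the corresponding contraction factor. The stopping criteria then transfer these bounds back to the three components of $\textup{H}_\ell^{k,i}$, using nested iteration $u_\ell^{k,0}=u_\ell^{k-1,\underline{i}}$ and $u_{\ell+1}^{0,\underline{i}}=u_\ell^{\underline{k},\underline{i}}$ to connect consecutive triples and the triangle inequality (combined with stability~\eqref{axiom:stability} and the uniform constants $L[10\tau]$, $C_{\textup{stab}}[10\tau]$) to pass between the exact and approximate iterates.

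The main obstacle is the interaction between the tail arguments and the iteration-count parts of the stopping criteria, namely $i_{\textup{min}}\le i$ in~\eqref{eq:i_stopping_criterion} and the nested-iteration bound $\enorm{u_\ell^{k,\underline{i}}}\le 2M$ in~\eqref{eq:k_stopping_criterion}, which may force the loops to continue past the point at which the accuracy part is already satisfied, so that a naive summation based only on the accuracy criteria does not close. In the algebraic loop this is harmless because $i_{\textup{min}}$ is a fixed uniform constant that is absorbed into $C_{\textup{lin}}$. In the linearization loop I would exploit the uniform index $k_0$ from Theorem~\ref{theorem:uniformBoundedness}: for $k\ge k_0$ the nested-iteration bound is automatic, while for $k<k_0$ only finitely many additional iterations occur per mesh level, contributing only a multiplicative constant. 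Combining these bookkeeping details with the three contractions and the energy Pythagoras identity yields the tail-summability estimate, and hence~\eqref{eq:RLinearConvergence}, with the announced dependencies of $q_{\textup{lin}}$ and $C_{\textup{lin}}$.
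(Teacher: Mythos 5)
Your overall architecture (tail summability via one geometric contraction per loop, then the equivalence with R-linear convergence) is the same as the paper's, and your $i$- and $k$-loop ingredients are essentially correct. The genuine gap is in the mesh-level step. You propose a CKNS-type level-to-level contraction of $\Delta_\ell^2=\enorm{u^\star-u_\ell^\star}^2+\gamma\,\eta_\ell(u_\ell^\star)^2$, closed by the quasi-Pythagoras
$\enorm{u^\star-u_{\ell+1}^\star}^2+\enorm{u_\ell^\star-u_{\ell+1}^\star}^2\le \frac{L}{\alpha}\,\enorm{u^\star-u_\ell^\star}^2$. This inequality is true, but it is too weak: in the contraction argument the term $\enorm{u_{\ell+1}^\star-u_\ell^\star}^2$ must absorb the estimator-reduction remainder (forcing $\gamma\lesssim C_{\textup{stab}}^{-2}$ from above), and the leftover $(\frac{L}{\alpha}-1)\,\enorm{u^\star-u_\ell^\star}^2$ must then be dominated via reliability~\eqref{axiom:reliability} by a fraction of $\gamma\,\eta_\ell(u_\ell^\star)^2$, which forces $\gamma\gtrsim(\frac{L}{\alpha}-1)\,C_{\textup{rel}}^2$ from below. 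These two constraints are incompatible unless $L/\alpha$ is close to $1$, which is not assumed; this is exactly the known obstruction for nonlinear problems that the paper's introduction flags (no general quasi-orthogonality à la Feischl is available here). A second, independent problem: Dörfler marking~\eqref{eq:doerfler} is performed for the computed iterate $u_\ell^{\underline{k},\underline{i}}$, not for $u_\ell^\star$; transferring it to $u_\ell^\star$ requires the estimator equivalence of Lemma~\ref{lemma:estimatorEquivalence} and hence $\lambda_{\textup{lin}}<\lambda_{\textup{lin}}^\star$, whereas the theorem claims arbitrary $\lambda_{\textup{lin}}>0$ and $0<\theta\le 1$.

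The paper circumvents both issues by never forming $\Delta_\ell$: it works with $a_\ell=[\mathcal{E}(u_\ell^{\underline{k},\underline{i}})-\mathcal{E}(u_\ell^\star)]^{1/2}+\gamma\,\eta_\ell(u_\ell^{\underline{k},\underline{i}})$ (the discretization error $\enorm{u^\star-u_\ell^\star}$ never appears in the contracted quantity), performs the estimator reduction directly for the marked iterate, and obtains only the perturbed one-step estimate $a_{\ell+1}\le\widetilde q\,a_\ell+b_\ell$ with $b_\ell\simeq[\mathcal{E}(u_\ell^\star)-\mathcal{E}(u_{\ell+1}^\star)]^{1/2}$, cf.~\eqref{eq:requirement1}. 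The energy Pythagoras identity~\eqref{eq:qo:abstract} is then used not for a contraction but for the \emph{square-summability} of the perturbations, $\sum_{\ell'}b_{\ell'}^2\le\mathcal{E}(u_\ell^\star)-\mathcal{E}(u^\star)\lesssim\eta_\ell(u_\ell^\star)^2\lesssim a_\ell^2$ as in~\eqref{eq:single:orthogonality}, after which the tail-summability criterion of~\cite[Lemma~6]{fps2023} replaces the missing contraction. You should redirect your use of the energy identity accordingly. A smaller point: your treatment of the case where the accuracy part of~\eqref{eq:k_stopping_criterion} holds but $\enorm{u_\ell^{k,\underline{i}}}\le 2M$ fails ("finitely many extra steps per level") still requires a bound on the estimator component of $\textup{H}_\ell^k$ in those steps; the paper obtains the stronger quasi-contraction there via $M<\enorm{u_\ell^\star-u_\ell^{k,\underline{i}}}$ and quasi-monotonicity~\eqref{eq:monotonicityExact}, which you would need to supply or replace.
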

\begin{proof}[\bf \textit{Proof of Theorem~\ref{theorem:RLinearConvergence}}]
  The proof is split into seven steps.

  {\bf Step 1 (equivalences of quasi-error quantities).}
  Throughout the proof, we approach $\textup{H}_\ell^{k, i}$ from~\eqref{eq:quasi-error}
  after introducing auxiliary quantities such as
  \begin{equation}\label{eq1:proof}
    \textup{H}_\ell^k
    \coloneqq
    [\mathcal{E}(u_\ell^{k,\underline{i}})- \mathcal{E}(u_\ell^\star)]^{1/2}
    + \eta_\ell(u_\ell^{k,\underline{i}})
    \quad \text{for all } (\ell,k,\underline{i}) \in \mathcal{Q}
  \end{equation}
  and
  \begin{equation}\label{eq:quasi-error:discrete}
    \textup{H}_\ell \coloneqq [ \mathcal{E}(u_\ell^{\underline{k}, \underline{i}}) -
    \mathcal{E}(u_\ell^\star)]^{1/2} +\gamma \,
    \eta_\ell(u_\ell^{\underline{k}, \underline{i}}) \eqreff{eq1:proof}\simeq  \textup{H}_\ell^{\underline{k}} \quad
    \text{ for all } (\ell,
    \underline{k}, \underline{i}) \in \mathcal{Q},
  \end{equation}
  where $0 < \gamma < 1$ is a free parameter to be fixed later in~\eqref{eq:step3:2} below.
  In the following, we show that $\textup{H}_\ell^{\underline{k}, \underline{i}} \simeq \textup{H}_\ell^{\underline{k}}
  \eqreff{eq:quasi-error:discrete}\simeq \textup{H}_\ell$.
  First, note that the equivalence of energy and norm from~\eqref{eq:equivalence} (with $L[2\tau]$ from
  boundedness~\eqref{crucial:mbounded} and~\eqref{eq:exact:bounded}) yields that
  \begin{equation}\label{eq2:proof}
    \textup{H}_\ell^{k}
    \le
    \textup{H}_\ell^{k} + \enorm{u_\ell^{{k},\star} - u_\ell^{{k},\underline{i}}}
    \eqreff{eq:equivalence}\simeq
    \textup{H}_\ell^{{k},\underline{i}}
    \quad \text{ for all } (\ell,{k},\underline{i}) \in \mathcal{Q}.
  \end{equation}
  The \textsl{a~posteriori} estimate~\eqref{eq:algebraic:estimate} for the algebraic solver from
  Remark~\ref{remark:stoppingAndMark}{\rm(iii)}, norm-energy equivalence~\eqref{eq:f4}, and the stopping
  criterion~\eqref{eq:k_stopping_criterion} show that
  \begin{align*}
    \enorm{u_\ell^{\underline{k},\star} - u_\ell^{\underline{k},\underline{i}}}
    &\eqreff{eq:algebraic:estimate}\le
    \frac{q_{\textup{alg}}}{1-q_{\textup{alg}}}  \, \lambda_{\textup{alg}} \, \big[ \lambda_{\textup{lin}} \,
      \eta_\ell(u_\ell^{\underline{k}, \underline{i}}) +
    \enorm{u_\ell^{\underline{k},\underline{i}} - u_\ell^{\underline{k},0}} \big]
    \\
    &\eqreff{eq:f4}\lesssim
    \eta_\ell(u_\ell^{\underline{k}, \underline{i}}) + \big[\mathcal{E}(u_\ell^{k,0}) -
      \mathcal{E}(u_\ell^{\underline{k},
    \underline{i}})\big]^{1/2}
    \eqreff{eq:k_stopping_criterion}\lesssim
    \eta_\ell(u_\ell^{\underline{k},\underline{i}})
    \le
    \textup{H}_{\ell}^{\underline{k}}.
  \end{align*}
  With~\eqref{eq2:proof}, we conclude that $\textup{H}_\ell \simeq \textup{H}_\ell^{\underline{k}} \simeq
  \textup{H}_\ell^{\underline{k},\underline{i}}$.

  {\bf Step 2 (estimator reduction).}
  The axioms~\eqref{axiom:stability}--\eqref{axiom:reduction} and Dörfler marking~\eqref{eq:doerfler} prove
  the estimator reduction estimate (cf., e.g.,~\cite[Equation~(52)]{ghps2021})
  \begin{equation}\label{eq:estimator-reduction}
    \eta_{\ell+1}(u_{\ell+1}^{\underline{k}, \underline{i}})
    \le
    q_\theta \, \eta_\ell(u_\ell^{\underline{k}, \underline{i}}) + C_{\textup{stab}}[4M] \,
    \enorm{u_{\ell+1}^{\underline{k}, \underline{i}} -
    u_\ell^{\underline{k}, \underline{i}}}
    \quad \text{ for all } \ell \in \mathbb{N}_0,
  \end{equation}
  where $4M$ stems from nested iteration~\eqref{crucial:nestediteration} from
  Theorem~\ref{theorem:uniformBoundedness}. Moreover, the triangle inequality, the
  equivalence~\eqref{eq:equivalence}, and energy contraction~\eqref{crucial:energyContraction} give that
  \begin{align*}
    \enorm{u_{\ell+1}^{\underline{k}, \underline{i}} - u_\ell^{\underline{k}, \underline{i}}}
    & \le
    \enorm{u_{\ell+1}^{\star}-u_{\ell+1}^{\underline{k}, \underline{i}}}
    +
    \enorm{u_{\ell+1}^{\star}-u_{\ell}^{\underline{k}, \underline{i}}}   \\
    & \eqreff*{eq:equivalence}\le \Big( \frac{2}{\alpha} \Big)^{1/2}   \,
    \big[ \mathcal{E}(u_{\ell+1}^{\underline{k}, \underline{i}})-  \mathcal{E}(u_{\ell+1}^{\star})\big]^{1/2}  +
    \Big( \frac{2}{\alpha}
    \Big)^{1/2} \,  \big[\mathcal{E}(u_{\ell}^{\underline{k}, \underline{i}})-
    \mathcal{E}(u_{\ell+1}^{\star})\big]^{1/2}
    \\
    \, \,&\eqreff*{crucial:energyContraction}\le  \, \,
    (1+q_{\mathcal{E}}^{\underline{k}[\ell+1]}) \,\Big( \frac{2}{\alpha} \Big)^{1/2} \,
    \big[\mathcal{E}(u_{\ell}^{\underline{k}, \underline{i}})-
    \mathcal{E}(u_{\ell+1}^{\star})\big]^{1/2}.
  \end{align*}
  Combined with the estimator reduction estimate~\eqref{eq:estimator-reduction} and with $1+q_{\mathcal{E}} < 2$, we
  obtain with $C_1 \coloneqq 2\,(2/\alpha)^{1/2}\, C_{\textup{stab}}[4M]$ that
  \begin{equation}\label{eq:estimatorReductionFinal}
    \eta_{\ell+1}(u_{\ell+1}^{\underline{k}, \underline{i}})
    \le
    q_\theta \, \eta_\ell(u_\ell^{\underline{k}, \underline{i}}) +C_1 \,
    \big[\mathcal{E}(u_{\ell}^{\underline{k}, \underline{i}})-
    \mathcal{E}(u_{\ell+1}^{\star})\big]^{1/2}
    \quad \text{ for all } 0 \le \ell < \underline{\ell}.
  \end{equation}

  \textbf{Step~3 (tail summability with respect to $\boldsymbol{\ell}$).}
  Since $1 \le \underline{k}[\ell+1]$, nested iteration $u_{\ell+1}^{0, \underline{i}}=u_\ell^{\underline{k},
  \underline{i}}$
  proves that
  \begin{align}\label{eq:step3:1}
    \mkern-10mu  \textup{H}_{\ell+1} &\eqreff*{eq:quasi-error:discrete}=\,
    \big[\mathcal{E}(u_{\ell+1}^{\underline{k},
      \underline{i}})-
    \mathcal{E}(u_{\ell+1}^{\star})\big]^{1/2} + \gamma \,
    \eta_{\ell+1}(u_{\ell+1}^{\underline{k},\underline{i}})
    \\
    &\eqreff*{crucial:energyContraction}\le
    q_{\mathcal{E}} \, \big[\mathcal{E}(u_{\ell}^{\underline{k}, \underline{i}})-
    \mathcal{E}(u_{\ell+1}^{\star})\big]^{1/2}
    +  \, \gamma \, \eta_{\ell+1}(u_{\ell+1}^{\underline{k},\underline{i}}) \notag
    \\
    &\eqreff*{eq:estimatorReductionFinal}\le
    \bigl( q_{\mathcal{E}} + C_1 \, \gamma \bigr) \,
    \big[\mathcal{E}(u_{\ell}^{\underline{k}, \underline{i}})-  \mathcal{E}(u_{\ell+1}^{\star})\big]^{1/2}
    + q_\theta \,  \gamma \, \eta_\ell(u_\ell^{\underline{k},\underline{i}}) \notag
    \\
    &\le
    \max \bigl\{ q_{\mathcal{E}}\! + \!C_1 \,  \gamma,\, q_\theta \bigr\} \,
    \bigl( \big[\mathcal{E}(u_{\ell}^{\underline{k},
      \underline{i}})\!-\!\mathcal{E}(u_{\ell+1}^{\star})\big]^{1/2}\! +\! \gamma \,
    \eta_\ell(u_\ell^{\underline{k},\underline{i}}) \bigr)
    \text{ for all } (\ell+1,\underline{k},\underline{i}) \in \mathcal{Q}.
  \end{align}
  With $0 < q_\theta < 1$, we choose $0 < \gamma < (1-q_{\mathcal{E}})/C_1<1$ to guarantee that
  \begin{equation}\label{eq:step3:2}
    0 < \widetilde{q}
    \coloneqq
    \max \bigl\{ q_{\mathcal{E}}+ C_1 \,  \gamma,\, q_\theta  \bigr\} < 1.
  \end{equation}
  With the triangle inequality,~\eqref{eq:step3:1} leads us to
  \begin{align}
    \begin{split}\label{eq:requirement1}
      a_{\ell+1}
      &\, \coloneqq \,
      \big[\mathcal{E}(u_{\ell+1}^{\underline{k}, \underline{i}})-\mathcal{E}(u_{\ell+1}^{\star})\big]^{1/2}
      + \gamma \,
      \eta_{\ell+1}(u_{\ell+1}^{\underline{k},\underline{i}})
      \\&
      \, \eqreff*{eq:step3:1}\le \,
      \widetilde{q} \,
      \bigl( \big[\mathcal{E}(u_{\ell}^{\underline{k},
        \underline{i}})-\mathcal{E}(u_{\ell}^{\star})\big]^{1/2} + \gamma \,
      \eta_\ell(u_\ell^{\underline{k},\underline{i}}) \bigr)
      + \widetilde{q} \, \big[\mathcal{E}(u_{\ell}^{\star})-\mathcal{E}(u_{\ell+1}^{\star})\big]^{1/2}
      \\&
      \, \eqqcolon \,
      \widetilde{q} \, a_\ell + b_\ell
      \quad \text{ for all } (\ell,\underline{k},\underline{i}) \in \mathcal{Q}.
    \end{split}
  \end{align}
  By exploiting the equivalence~\eqref{eq:equivalence} and stability~\eqref{axiom:stability} (since all
    $u_\ell^{\underline{k}, \underline{i}}$ are uniformly bounded by nested
  iteration~\eqref{crucial:nestediteration}), the
  C\'ea lemma~\eqref{eq:cea}, and reliability~\eqref{axiom:reliability} prove that
  \begin{align}\label{eq:cea+reliability}
    \begin{split}
      &\mkern-12mu \big[\mathcal{E}(u_{\ell'}^{\star})-\mathcal{E}(u_{\ell''}^{\star})\big]^{1/2}
      \eqreff{eq:equivalence}\simeq \enorm{u_{\ell''}^\star - u_{\ell'}^\star}
      \eqreff*{eq:cea}\lesssim
      \enorm{u^\star - u_{\ell}^\star}
      \,  \eqreff*{axiom:reliability}\lesssim\,
      \eta_\ell(u_\ell^\star)
      \,  \eqreff*{axiom:stability}\lesssim
      \,     \enorm{u_\ell^\star - u_\ell^{\underline{k}, \underline{i}}} + \eta_\ell(u_\ell^{\underline{k},
      \underline{i}})  \\
      &
      \, \, \eqreff{eq:equivalence}{\simeq}
      \big[ \mathcal{E}(u_\ell^{\underline{k}, \underline{i}})- \mathcal{E}(u_\ell^\star) \big]^{1/2} +
      \eta_\ell(u_\ell^{\underline{k}, \underline{i}})
      \simeq a_\ell
      \text{ for all } \ell \le \ell' \le \ell'' \le \underline{\ell}
      \text{ with} (\ell,\underline{k}, \underline{i}) \in \mathcal{Q}.
    \end{split}
  \end{align}
  Hence, we infer that $b_{\ell+N} \lesssim a_\ell$ for all $0 \le \ell \le \ell+N \le \underline{\ell}$ with
  $(\ell,\underline{k},\underline{i}) \in
  \mathcal{Q}$, where the hidden stability constant $C_{\textup{stab}}[3M]$ depends on $3M$ due
  to~\eqref{eq:exact:bounded} and
  nested iteration~\eqref{crucial:nestediteration}.

  The energy $\mathcal{E}$ from~\eqref{eq:potential} (and its Pythagorean identity that leads to a telescoping sum)
  as well as the minimization property~\eqref{eq:emin} for $\mathcal{X}_{H}= \mathcal{X}$ allow for the estimate
  \begin{align}\label{eq:single:orthogonality}
    \begin{split}
      \sum_{\ell' = \ell}^{{\ell+N-1}}& b_{\ell'}^2
      \lesssim
      \sum_{\ell' = \ell}^{{\underline{\ell}-1}}\big[ \mathcal{E}(u_{\ell'}^\star)-\mathcal{E}(u_{\ell'+1}^\star) \big]
      \le
      \mathcal{E}(u_\ell^\star)-\mathcal{E}(u_{\underline{\ell}}^\star)
      \eqreff{eq:emin}\le
      \mathcal{E}(u_{\ell}^\star)-\mathcal{E}(u^\star)  \\
      &
      \eqreff*{eq:equivalence}\le
      \frac{L[2M]}{2} \, \enorm{u^\star - u_{\ell}^\star}^2
      \eqreff{axiom:reliability}\le C_{\textup{rel}}^2 \, \frac{L[2M]}{2} \, \eta_{\ell}(u_{\ell}^\star)^2
      \eqreff*{eq:cea+reliability}\lesssim  a_\ell^2
      {\text{ for all } 0 \le \ell < \ell+N \le \underline{\ell},}
    \end{split}
  \end{align}
  where the hidden stability constant $C_{\textup{stab}}$ depends on $3M$ due to~\eqref{eq:exact:bounded} and nested
  iteration~\eqref{crucial:nestediteration}.

  With~\eqref{eq:requirement1}--\eqref{eq:single:orthogonality}, the assumptions for the tail summability
  criterion from~\cite[Lemma~6]{fps2023} are met. We thus conclude tail summability of $\textup{H}_{\ell+1} \simeq
  \textup{H}_\ell^{\underline{k}} \simeq a_\ell$, i.e.,
  \begin{align}\label{eq:single:summability-ell}
    \boxed{\sum_{\ell' = \ell+1}^{\underline{\ell} - 1} \textup{H}_{\ell'}^{\underline{k}}
      \lesssim \textup{H}_\ell^{\underline{k}}
    \quad \text{ for all } 0 \le \ell < \underline{\ell}.}
  \end{align}

  \textbf{Step~4 (quasi-contraction in $\boldsymbol{k}$).}
  We distinguish three cases.

  \noindent{\bf Case 4.1: Evaluation of~(\ref{eq:k_stopping_criterion}) yields $\boldsymbol{\mathtt{TRUE}\, \land
  \,\mathtt{FALSE}}$.}
  This gives rise to
  \begin{align*}
    2M \eqreff{eq:k_stopping_criterion}< \enorm{u_\ell^{k, \underline{i}}} \le \enorm{u_\ell^\star}+
    \enorm{u_\ell^\star -u_\ell^{k, \underline{i}}} \eqreff{eq:exact:bounded}\le M + \enorm{u_\ell^\star
    -u_\ell^{k, \underline{i}}}
  \end{align*}
  and hence, we conclude that $ M < \enorm{u_\ell^\star -u_\ell^{k, \underline{i}}}$. Thus,
  \begin{align}
    \begin{split}\label{eq:k:counter:aux1}
      1= \frac{M}{M} < \frac{\enorm{u_\ell^\star -u_\ell^{k, \underline{i}}}}{M}
      \,&\eqreff*{eq:equivalence}\le \, \frac{1}{M} \,\Big(\frac{2}{\alpha} \Big)^{1/2} \,
      \big[\mathcal{E}(u_\ell^{k,\underline{i}}) - \mathcal{E}(u_\ell^\star)\big]^{1/2}
      \\&\eqreff*{crucial:energyContraction}\le \,
      \frac{q_{\mathcal{E}}}{M} \,\,\Big(\frac{2}{\alpha} \Big)^{1/2} \, \big[\mathcal{E}(u_\ell^{k-1,\underline{i}}) -
      \mathcal{E}(u_\ell^\star)\big]^{1/2}.
    \end{split}
  \end{align}
  We recall from~\eqref{eq:exact:bounded} that $\enorm{u_\ell^\star} \le M$ and $\enorm{u_\ell^\star -
  u_0^\star} \le 2M$ independently of $\ell$. Moreover, there holds quasi-monotonicity of the estimators in
  the sense that
  \begin{equation}\label{eq:monotonicityExact}
    \eta_\ell(u_\ell^\star) \le C_{\textup{mon}} \, \eta_0(u_0^\star) \quad \text{ with } C_{\textup{mon}} =
    \big[ 2 + 8 \,
    C_{\textup{stab}}[2M]^2 (1 + C_{\textup{Céa}}[2M]^2) \, C_{\textup{rel}}^2 \big]^{1/2};
  \end{equation}
  cf.~\cite[Lemma~3.6]{axioms} or~\cite[Equation~(42)]{bbimp2022cost} for the locally Lipschitz continuous
  setting. In particular, estimate~\eqref{eq:monotonicityExact} holds also for the discrete limit space
  $\mathcal{X}_{\underline{\ell}} \coloneqq \mathrm{closure}\big(\bigcup_{\ell=0}^{\underline{\ell}}
  \mathcal{X}_\ell \big)$. Additionally, we note
  that the estimate~\eqref{eq:monotonicityExact} admits
  \begin{equation}\label{eq:k:counter:aux2}
    \eta_\ell(u_\ell^\star)\, \eqreff*{eq:monotonicityExact}\le\, C_{\textup{mon}} \, \eta_0(u_0^\star)
    \,\eqreff*{axiom:stability}\le\, C_{\textup{mon}} \, \eta_0(0) + C_{\textup{mon}} \,
    C_{\textup{stab}}[M]\,\enorm{u_0^\star}
    \eqreff{eq:k:counter:aux1}\lesssim \big[\mathcal{E}(u_\ell^{k-1,\underline{i}}) -
    \mathcal{E}(u_\ell^{\star})\big]^{1/2}.
  \end{equation}
  The estimate~\eqref{eq:k:counter:aux2}, stability~\eqref{axiom:stability} with stability constant
  $C_{\textup{stab}}[2\tau]$ due to~\eqref{crucial:mbounded} and~\eqref{eq:exact:bounded}, and energy
  contraction~\eqref{crucial:energyContraction} yield that
  \begin{align}
    \begin{split}\label{eq:k:counter:aux3}
      \eta_\ell(u_\ell^{k, \underline{i}})
      \, \,&\eqreff*{axiom:stability}\le  \, \,
      \eta_\ell(u_\ell^{\star}) + C_{\textup{stab}}[2\tau] \, \enorm{u_\ell^\star - u_\ell^{k,\underline{i}}}
      \,\,   \eqreff*{eq:equivalence}\lesssim \, \,
      \eta_\ell(u_\ell^{\star}) + \big[\mathcal{E}(u_\ell^{k,\underline{i}}) -
      \mathcal{E}(u_\ell^{\star})\big]^{1/2} \\
      & \eqreff*{eq:k:counter:aux2}\lesssim \, \, \big[\mathcal{E}(u_\ell^{k-1,\underline{i}}) -
      \mathcal{E}(u_\ell^{\star})\big]^{1/2} +  \big[\mathcal{E}(u_\ell^{k,\underline{i}}) -
      \mathcal{E}(u_\ell^{\star})\big]^{1/2}
      \eqreff{crucial:energyContraction}\lesssim   \big[\mathcal{E}(u_\ell^{k-1,\underline{i}}) -
      \mathcal{E}(u_\ell^{\star})\big]^{1/2}.
    \end{split}
  \end{align}
  For $0 \le k' < k < \underline{k}[\ell]$, the definition~\eqref{eq1:proof}, energy
  contraction~\eqref{crucial:energyContraction}, and~\eqref{eq:k:counter:aux3} prove
  \begin{align}
    \begin{split}\label{eq:k:iterationCount}
      \textup{H}_\ell^k
      &\eqreff{crucial:energyContraction}\lesssim
      q_{\mathcal{E}} \, \big[\mathcal{E}(u_\ell^{k-1,\underline{i}}) -  \mathcal{E}(u_\ell^{\star})\big]^{1/2} +
      \eta_\ell(u_\ell^{k,\underline{i}})
      \eqreff{eq:k:counter:aux3}\lesssim
      \, \big[\mathcal{E}(u_\ell^{k-1,\underline{i}}) -  \mathcal{E}(u_\ell^{\star})\big]^{1/2} \\
      &\eqreff{crucial:energyContraction}\lesssim
      q_{\mathcal{E}}^{(k-1) - k'} \, \big[ \mathcal{E}(u_\ell^{k', \underline{i}}) -
      \mathcal{E}(u_\ell^\star)\big]^{1/2}
      \eqreff{eq1:proof}\lesssim
      q_{\mathcal{E}}^{k-k'} \, \textup{H}_\ell^{k'}.
    \end{split}
  \end{align}
  This concludes Case~4.1. \hfill $\diamond$

  \noindent {\bf Case 4.2: Evaluation of~(\ref{eq:k_stopping_criterion}) yields $\boldsymbol{\mathtt{FALSE}\, \land
  \,\mathtt{FALSE}}$ or $\boldsymbol{\mathtt{FALSE}\, \land \,\mathtt{TRUE}}$.} For $0 \le k' < k <
  \underline{k}[\ell]$, the
  definition~\eqref{eq1:proof}, the failure of the accuracy condition in the stopping criterion for the
  inexact Zarantonello linearization~\eqref{eq:k_stopping_criterion}, energy minimization~\eqref{eq:emin},
  and energy contraction~\eqref{crucial:energyContraction} prove that
  \begin{align}
    \begin{split}\label{eq:k:accuracy}
      \textup{H}_\ell^k
      \, \,  &  \eqreff*{eq:k_stopping_criterion}<  \, \,
      \big[\mathcal{E}(u_\ell^{k,\underline{i}}) -  \mathcal{E}(u_\ell^{\star})\big]^{1/2} +
      \lambda_{\textup{lin}}^{-1} \,
      \big[\mathcal{E}(u_\ell^{k-1,\underline{i}}) -  \mathcal{E}(u_\ell^{k,\underline{i}})\big]^{1/2} \\
      &
      \stackrel{\mathclap{\eqref{eq:emin},~\eqref{crucial:energyContraction}}}\lesssim  \, \, \,  \, \,
      \big[\mathcal{E}(u_\ell^{k-1,\underline{i}}) -  \mathcal{E}(u_\ell^{\star})\big]^{1/2}
      \eqreff{crucial:energyContraction}\lesssim
      q_{\mathcal{E}}^{(k-1)-k'} \, \big[\mathcal{E}(u_\ell^{k',\underline{i}}) -
      \mathcal{E}(u_\ell^{\star})\big]^{1/2}
      \eqreff{eq1:proof}\lesssim
      q_{\mathcal{E}}^{k-k'} \, \textup{H}_\ell^{k'}.
    \end{split}
  \end{align}
  This concludes Case~4.2. \hfill $\diamond$

  \noindent {\bf Case 4.3: Evaluation of~(\ref{eq:k_stopping_criterion}) yields $\boldsymbol{\mathtt{TRUE}\, \land
  \,\mathtt{TRUE}}$.} The equivalence~\eqref{eq:f4}, boundedness~\eqref{crucial:mbounded}, and energy
  minimization~\eqref{eq:emin} prove that
  \begin{align}\label{eq1:step7}
    \begin{split}
      \textup{H}_\ell^{\underline{k}}
      \,\, &\eqreff*{axiom:stability}\lesssim \, \,
      \big[ \mathcal{E}(u_\ell^{\underline{k},\underline{i}}) - \mathcal{E}(u_\ell^{\star}) \big]^{1/2}
      + \enorm{u_\ell^{\underline{k},\underline{i}} - u_\ell^{\underline{k}-1,\underline{i}}}
      + \eta_\ell(u_\ell^{\underline{k}-1,\underline{i}})
      \\& \eqreff*{eq:f4}\lesssim \, \,
      \textup{H}_\ell^{\underline{k}-1} + \big[  \mathcal{E}(u_\ell^{\underline{k}-1,\underline{i}}) -
      \mathcal{E}(u_\ell^{\underline{k},\underline{i}})\big]^{1/2}
      \eqreff{eq:emin}\le 2 \, \textup{H}_\ell^{\underline{k}-1}
      \quad \text{for all } (\ell,\underline{k},\underline{i}) \in \mathcal{Q}.
    \end{split}
  \end{align}
  Since $k=\underline{k}[\ell]-1$ is covered by Case~4.1 or Case~4.2, estimate~\eqref{eq1:step7} leads to
  \begin{equation}\label{eq:k:met}
    \textup{H}_\ell^{\underline{k}}
    \eqreff{eq1:step7}\lesssim \frac{q_{\mathcal{E}}}{q_{\mathcal{E}}}\, \textup{H}_\ell^{\underline{k}-1} \lesssim
    q_{\mathcal{E}} \, \textup{H}_\ell^{\underline{k}-1}
    \,\, \,\, \,\stackrel{\mathclap{\eqref{eq:k:iterationCount},~\eqref{eq:k:accuracy}}}\lesssim \, \,\, \,
    q_{\mathcal{E}}^{\underline{k}[\ell]-k'} \textup{H}_\ell^{k', \underline{i}}.
  \end{equation}
  This concludes Case~4.3. \hfill $\diamond$

  Overall, the
  estimates~\eqref{eq:k:iterationCount}--\eqref{eq:k:accuracy} and~\eqref{eq:k:met} result in
  \begin{equation}\label{eq2:step7}
    \boxed{
      \textup{H}_\ell^k
      \lesssim q_{\mathcal{E}}^{\,k-k'} \textup{H}_\ell^{k'}
      \quad \text{for all } (\ell,k,j) \in \mathcal{Q} \text{ with } 0 \le k' \le k \le \underline{k}[\ell],
    }
  \end{equation}
  where the hidden constant depends only on $M$, $C_{\textup{stab}}[2\tau]$, $\alpha$, $L[2M]$,
  $C_{\textup{Céa}}[2M]$, $C_{\textup{rel}}$,
  $\lambda_{\textup{lin}}$, and $q_{\mathcal{E}}$.
  Furthermore, we recall from~\eqref{eq:cea+reliability} that
  $\big[ \mathcal{E}(u_{\ell-1}^\star)-\mathcal{E}(u_{\ell}^\star)\big]^{1/2}
  \lesssim \textup{H}_{\ell-1}^{\underline{k}}$.
  Together with nested iteration $u_{\ell-1}^{\underline{k},\underline{i}} =
  u_\ell^{0,\underline{i}}=u_\ell^{0,\star}$, this yields that
  \begin{equation*}
    \textup{H}_\ell^0 = \big[ \mathcal{E}(u_{\ell-1}^{\underline{k},\underline{i}}) -
    \mathcal{E}(u_\ell^\star)\big]^{1/2} +
    \eta_\ell(u_{\ell-1}^{\underline{k},\underline{i}})
    \lesssim \big[ \mathcal{E}(u_{\ell-1}^{\star}) - \mathcal{E}(u_\ell^\star)\big]^{1/2} +
    \textup{H}_{\ell-1}^{\underline{k}} \le
    \textup{H}_{\ell-1}^{\underline{k}}
  \end{equation*}
  and thus
  \begin{align}\label{eq3:step7}
    \boxed{
      \textup{H}_\ell^0
      \lesssim
      \textup{H}_{\ell-1}^{\underline{k}}
      \quad \text{ for all } (\ell,0,0) \in \mathcal{Q} \text{ with } \ell \ge 1.
    }
  \end{align}

  \textbf{Step~5 (tail summability with respect to $\boldsymbol{\ell}$ and $\boldsymbol{k}$).}
  The estimates~\eqref{eq2:step7}--\eqref{eq3:step7} from Step~4 as well as~\eqref{eq:single:summability-ell}
  from Step~3 and the geometric series prove that
  \begin{align}\label{eq:step8}
    \begin{split}
      &\sum_{\substack{(\ell',k',\underline{i}) \in \mathcal{Q} \\ |\ell',k',\underline{i}| >
      |\ell,k,\underline{i}|}} \textup{H}_{\ell'}^{k'}
      =
      \sum_{k' = k+1}^{\underline{k}[\ell]} \textup{H}_\ell^{k'}
      + \sum_{\ell' = \ell+1}^{\underline{\ell}} \sum_{k'=0}^{\underline{k}[\ell']} \textup{H}_{\ell'}^{k'}
      \eqreff{eq2:step7}\lesssim
      \textup{H}_\ell^k
      + \sum_{\ell' = \ell+1}^{\underline{\ell}} \textup{H}_{\ell'}^0
      \\& \quad
      \eqreff{eq3:step7}\lesssim
      \textup{H}_\ell^k
      + \sum_{\ell' = \ell}^{\underline{\ell}-1} \textup{H}_{\ell'}^{\underline{k}}
      \eqreff{eq:single:summability-ell}
      \lesssim
      \textup{H}_\ell^k
      + \textup{H}_\ell^{\underline{k}}
      \eqreff{eq2:step7}\lesssim
      \textup{H}_\ell^k
      \quad \text{ for all } (\ell,k,\underline{i}) \in \mathcal{Q}.
    \end{split}
  \end{align}

  \textbf{Step~6 (contraction in $\boldsymbol{i}$).}
  For $i=0$ and $k=0$, we recall that $u_\ell^{0,0} = u_\ell^{0,\underline{i}} = u_\ell^{0,\star}$ by definition and
  hence $\textup{H}_\ell^{0,0} \eqreff{eq:equivalence}\simeq \textup{H}_\ell^0$. For $k \ge 1$, nested iteration
  $u_\ell^{k,0} = u_\ell^{k-1,\underline{i}}$, contraction of the exact Zarantonello
  iteration~\eqref{eq:ZarantonelloExact}, and energy equivalence~\eqref{eq:equivalence} imply that
  \begin{align*}
    \enorm{u_\ell^{k,\star} - u_\ell^{k,0}}
    &\le
    \enorm{u_\ell^\star - u_\ell^{k,\star}} + \enorm{u_\ell^\star - u_\ell^{k-1,\underline{i}}}
    \\
    &\eqreff*{eq:ZarantonelloExact}\le \,
    (q_{\textup{Zar}}^{\star}[\delta; 3M] + 1) \, \enorm{u_\ell^\star - u_\ell^{k-1,\underline{i}}}
    \eqreff*{eq:equivalence}\lesssim 2 \, \textup{H}_\ell^{k-1}.
  \end{align*}
  Therefore, by using the equivalence~\eqref{eq:equivalence} once more, we obtain that
  \begin{equation}\label{eq1:step9}
    \boxed{
      \textup{H}_\ell^{k,0} \lesssim \textup{H}_\ell^{(k-1)_+}
      \quad \text{for all } (\ell,k,0) \in \mathcal{Q},
      \quad \text{where } (k-1)_+ \coloneqq \max\{0, k-1 \}.
    }
  \end{equation}
  Let $(\ell, k, i) \in \mathcal{Q}$.
  It holds that
  \begin{align}\label{eq:algebra:1}
    \begin{split}
      \textup{H}_\ell^{k,i} \ &\eqreff*{eq:quasi-error}=\ \enorm{u_\ell^\star -
      u_\ell^{k,i}} + \enorm{u_\ell^{k,\star} - u_\ell^{k,i}} +
      \eta_\ell(u_\ell^{k,i})
      \\
      &\eqreff*{axiom:stability}\le \
      \textup{H}_\ell^{k,i-1} + (2 + C_{\textup{stab}}[10\tau]) \, \enorm{u_\ell^{k,i} - u_\ell^{k,i-1}}
      \\&
      \eqreff*{eq:algebraContraction}\le \
      \textup{H}_\ell^{k,i-1} + (2 + C_{\textup{stab}}[10\tau])(q_{\textup{alg}}+1) \,
      \enorm{u_\ell^{k,\star} - u_\ell^{k,i-1}}
      \eqreff*{eq:quasi-error}\lesssim \
      \textup{H}_\ell^{k,i-1},
    \end{split}
  \end{align}
  where $C_{\textup{stab}}[10 \tau]$ stems from the uniform bound~\eqref{eq:uniform:all} from
  Theorem~\ref{theorem:uniformBoundedness}. Hence, we obtain
  \begin{equation*}
    \textup{H}_\ell^{k,i} \lesssim \textup{H}_\ell^{k,i'} \simeq q_{\textup{alg}}^{i-i'} \,
    \textup{H}_\ell^{k, i'} \quad \text{ for all }
    (\ell, k, i) \in \mathcal{Q} \text{ with } 0 \le i' \le i \le i_{\textup{min}}.
  \end{equation*}

  For all $0 \le i' < i_{\textup{min}} \le i < \underline{i}[\ell,k]$, we obtain with an
  \textsl{a~posteriori} estimate based on
  the contraction of the Zarantonello iteration~\eqref{eq:ZarantonelloExact} (where
    $q_{\textup{Zar}}^{\star}=q_{\textup{Zar}}^{\star}[\delta, 2\tau]$ depends on $\tau$
  from~\eqref{crucial:mbounded}), the
  \textsl{a~posteriori} estimate~\eqref{eq:algebraic:estimate} for the algebraic solver, the failure of the
  accuracy criterion of~\eqref{eq:i_stopping_criterion}, and the contraction of the algebraic
  solver~\eqref{eq:algebraContraction} that
  \begin{align}\label{eq:algebra:3}
    \textup{H}_\ell^{k,i}
    \, &\eqreff*{eq:quasi-error}=\ \enorm{u_\ell^\star -
    u_\ell^{k,i}} + \enorm{u_\ell^{k,\star} - u_\ell^{k,i}} +
    \eta_\ell(u_\ell^{k,i}) \notag
    \\
    &\le \
    \enorm{u_\ell^\star - u_\ell^{k,\star}} + 2 \, \enorm{u_\ell^{k,\star} -
    u_\ell^{k,i}} + \eta_\ell(u_\ell^{k,i})
    \notag    \\& \mkern4mu
    \le \
    \frac{q_{\textup{Zar}}^{\star}[\delta; 2\tau]}{1-q_{\textup{Zar}}^{\star}[\delta; 2\tau]} \,
    \enorm{u_\ell^{k,i} - u_\ell^{k-1, \underline{i}}} \notag
    \\
    &
    \qquad + \Big(2 + \frac{q_{\textup{Zar}}^{\star}[\delta; 2\tau]}{1-q_{\textup{Zar}}^{\star}[\delta; 2\tau]} \Big) \,
    \enorm{u_\ell^{k,\star} - u_\ell^{k,i}} + \eta_\ell(u_\ell^{k,i})
    \notag  \\&\mkern4mu
    \eqreff*{eq:algebraic:estimate}\lesssim \
    \enorm{u_\ell^{k,i} - u_\ell^{k-1, \underline{i}}} + \enorm{u_\ell^{k,i} - u_\ell^{k, i-1}} +
    \eta_\ell(u_\ell^{k,i})
    \notag  \\&\mkern4mu
    \eqreff*{eq:i_stopping_criterion}\lesssim \
    \enorm{u_\ell^{k,i} - u_\ell^{k, i-1}}
    \eqreff{eq:algebraContraction}\lesssim
    \enorm{u_\ell^{k,\star} - u_\ell^{k, i-1}}
    \notag
    \\
    &\eqreff{eq:algebraContraction}\lesssim q_{\textup{alg}}^{i-i'} \,\enorm{u_\ell^{k,\star} - u_\ell^{k, i'}} \le
    q_{\textup{alg}}^{i-i'} \, \textup{H}_\ell^{k,i'},
  \end{align}
  Altogether, the combination of~\eqref{eq:algebra:1}--\eqref{eq:algebra:3} proves that
  \begin{equation}\label{eq3:step10}
    \boxed{
      \textup{H}_\ell^{k,i} \lesssim q_{\textup{alg}}^{i-i'} \, \textup{H}_\ell^{k,i'}
      \quad \text{ for all } (\ell,k,i) \in \mathcal{Q} \quad \text{ with } \quad 0 \le i' \le i \le
      \underline{i}[\ell,k],
    }
  \end{equation}
  where the hidden constant depends only on $q_{\textup{Zar}}^{\star}[\delta; 2\tau]$, $q_{\textup{alg}}$,
  $\lambda_{\textup{alg}}$,
  $C_{\textup{stab}}[10\tau]$, and $i_{\textup{min}}$.

  \textbf{Step~7 (tail summability with respect to $\boldsymbol{\ell}$, $\boldsymbol{k}$, and $\boldsymbol{i}$).}
  Finally, we observe that
  \begin{align*}
    \begin{split}
      \sum_{\substack{(\ell',k',i') \in \mathcal{Q} \\ |\ell',k',i'| > |\ell,k,i|}}  \textup{H}_{\ell'}^{k',i'}
      &=
      \sum_{i'=i+1}^{\underline{i}[\ell,k]} \textup{H}_{\ell}^{k,i'}
      + \sum_{k'=k+1}^{\underline{k}[\ell]} \sum_{i'=0}^{\underline{i}[\ell,k']} \textup{H}_{\ell}^{k',i'}
      + \sum_{\ell' = \ell+1}^{\underline{\ell}} \sum_{k'=0}^{\underline{k}[\ell']}
      \sum_{i'=0}^{\underline{i}[\ell',k']}
      \textup{H}_{\ell'}^{k',i'}
      \\&
      \eqreff*{eq3:step10}\lesssim \, \, \,
      \textup{H}_{\ell}^{k,i}
      + \sum_{k'=k+1}^{\underline{k}[\ell]} \textup{H}_\ell^{k',0}
      + \sum_{\ell' = \ell+1}^{\underline{\ell}} \sum_{k'=0}^{\underline{k}[\ell']} \textup{H}_{\ell'}^{k',0}
      \eqreff{eq1:step9}\lesssim
      \textup{H}_{\ell}^{k,i}
      + \! \! \!\sum_{\substack{(\ell',k',\underline{i}) \in \mathcal{Q} \\ |\ell',k',\underline{i}| >
      |\ell,k,\underline{i}|}}  \! \! \textup{H}_{\ell'}^{k'}
      \\&
      \eqreff*{eq:step8}\lesssim \, \, \,
      \textup{H}_\ell^{k,i}
      + \textup{H}_\ell^{k}
      \eqreff{eq2:proof}\lesssim
      \textup{H}_\ell^{k,i}
      + \textup{H}_\ell^{k,\underline{i}}
      \eqreff{eq3:step10}\lesssim
      \textup{H}_\ell^{k,i}
      \quad \text{ for all } (\ell,k,i) \in \mathcal{Q}.
    \end{split}
  \end{align*}
  Since $\mathcal{Q}$ is countable and linearly ordered,~\cite[Lemma~4.9]{axioms} applies and proves R-linear
  convergence~\eqref{eq:RLinearConvergence} of $\textup{H}_\ell^{k,i}$. This concludes the proof.
\end{proof}

Given full R-linear convergence from Theorem~\ref{theorem:RLinearConvergence}, then convergence rates with
respect to the degrees of freedom coincide with rates with respect to the overall computational cost, where
we recall $\mathtt{cost}(\ell,k,i)$ from~\eqref{eq:cost}. Since all essential arguments are provided, the
proof follows verbatim from~\cite[Corollary~16]{fps2023}.

\begin{corollary}[rates $\,\boldsymbol{\widehat{=}}\,$ complexity]\label{cor:rates=complexity}
  Suppose full R-linear convergence~\eqref{eq:RLinearConvergence}. Recall $\mathtt{cost}(\ell, k, i)$
  from~\eqref{eq:cost}. Then, for any $s > 0$, it holds that
  \begin{equation}\label{eq:single:complexity}
    M(s)
    \coloneqq
    \sup_{(\ell,k,i) \in \mathcal{Q}} (\#\mathcal{T}_\ell)^s \, \textup{H}_\ell^{k,i}
    \le \sup_{(\ell,k, i) \in \mathcal{Q}} \mathtt{cost}(\ell, k, i)^s \, \textup{H}_\ell^{k,i}
    \le C_{\rm cost} \, M(s),
  \end{equation}
  where the constant $C_{\rm cost} > 0$ depends only on $C_{\textup{lin}}$, $q_{\textup{lin}}$, and $s$.
  Moreover, there exists $s_0 > 0$ such that $M(s) < \infty$ for all $0 < s \le s_0$. \hfill \qed
\end{corollary}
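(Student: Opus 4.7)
The plan is to treat the two inequalities in~\eqref{eq:single:complexity} and the finiteness of $M(s)$ for small $s$ as three essentially independent arguments. The leftmost inequality is immediate: by definition~\eqref{eq:cost} of $\mathtt{cost}(\ell,k,i)$, the quantity $\#\mathcal{T}_\ell$ appears as (at least) one of the summands, hence $\#\mathcal{T}_\ell \le \mathtt{cost}(\ell,k,i)$, and raising to the $s$-th power and multiplying by $\textup{H}_\ell^{k,i}$ yields the claim.

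For the non-trivial middle-to-right inequality, I would invert the full R-linear convergence bound from Theorem~\ref{theorem:RLinearConvergence}. Fix $(\ell,k,i)\in\mathcal{Q}$ and any $(\ell',k',i')\in\mathcal{Q}$ with $|\ell',k',i'|\le|\ell,k,i|$. Combining~\eqref{eq:RLinearConvergence} with the definition of $M(s)$ gives
\[
(\#\mathcal{T}_{\ell'})^s \, \textup{H}_\ell^{k,i}
\le C_{\textup{lin}} \, q_{\textup{lin}}^{|\ell,k,i|-|\ell',k',i'|} \, (\#\mathcal{T}_{\ell'})^s \, \textup{H}_{\ell'}^{k',i'}
\le C_{\textup{lin}} \, q_{\textup{lin}}^{|\ell,k,i|-|\ell',k',i'|} \, M(s),
\]
so that $\#\mathcal{T}_{\ell'} \le (C_{\textup{lin}} M(s)/\textup{H}_\ell^{k,i})^{1/s} q_{\textup{lin}}^{(|\ell,k,i|-|\ell',k',i'|)/s}$. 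Since the step counter $|\cdot,\cdot,\cdot|$ is a bijection of $\mathcal{Q}$ onto an initial segment of $\mathbb{N}_0$, summing this bound over $(\ell',k',i')$ with $|\ell',k',i'|\le|\ell,k,i|$ produces a genuinely geometric series:
\[
\mathtt{cost}(\ell,k,i) = \! \! \! \!\sum_{|\ell',k',i'|\le|\ell,k,i|} \! \! \! \! \#\mathcal{T}_{\ell'}
\le \Big(\frac{C_{\textup{lin}} M(s)}{\textup{H}_\ell^{k,i}}\Big)^{\!1/s}\sum_{n=0}^{\infty} q_{\textup{lin}}^{n/s}
= C \Big(\frac{M(s)}{\textup{H}_\ell^{k,i}}\Big)^{\!1/s},
\]
where the series converges because $q_{\textup{lin}}^{1/s}<1$. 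Raising to the $s$-th power and multiplying by $\textup{H}_\ell^{k,i}$ yields $\mathtt{cost}(\ell,k,i)^s \, \textup{H}_\ell^{k,i} \le C^s \, M(s)$, which is the asserted estimate with $C_{\textup{cost}} = C^s$ depending only on $C_{\textup{lin}}$, $q_{\textup{lin}}$, and $s$.

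For the finiteness of $M(s)$ for sufficiently small $s>0$, I would combine the worst-case growth $\#\mathcal{T}_\ell \le C_{\textup{NVB}}^{\,\ell}\,\#\mathcal{T}_0$ (each NVB bisection increases the number of elements by a bounded factor depending on $d$) with the trivial lower bound $|\ell,k,i|\ge\ell$ and full R-linear convergence starting from $(0,0,0)$: this yields
\[
(\#\mathcal{T}_\ell)^s \, \textup{H}_\ell^{k,i}
\le C_{\textup{lin}} \, (\#\mathcal{T}_0)^s \, \textup{H}_0^{0,0} \,\bigl(C_{\textup{NVB}}^{\,s}\,q_{\textup{lin}}\bigr)^{\ell},
\]
which is uniformly bounded in $(\ell,k,i)\in\mathcal{Q}$ as soon as $C_{\textup{NVB}}^{\,s}\,q_{\textup{lin}}<1$, i.e.\ for all $0<s\le s_0 \coloneqq -\log q_{\textup{lin}} / \log C_{\textup{NVB}}$.

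The main obstacle is not analytic but book-keeping: one must exploit the fact that the lexicographic index $|\cdot,\cdot,\cdot|$ enumerates $\mathcal{Q}$ in steps of one in order to recognise the summation over $(\ell',k',i')$ as a geometric series in the decay factor $q_{\textup{lin}}^{1/s}$. Once full R-linear convergence of Theorem~\ref{theorem:RLinearConvergence} is in hand, this argument is purely formal and independent of the PDE structure, and reproduces verbatim the reasoning of~\cite[Corollary~16]{fps2023}.
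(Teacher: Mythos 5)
Your proposal is correct and is essentially the proof the paper has in mind: the paper does not spell out the argument but states that it ``follows verbatim from~\cite[Corollary~16]{fps2023}'', and your three steps (trivial lower bound since $\#\mathcal{T}_\ell$ is a summand of $\mathtt{cost}(\ell,k,i)$; inversion of~\eqref{eq:RLinearConvergence} plus the geometric series over the lexicographic counter; bounded NVB growth $\#\mathcal{T}_\ell\le C_{\textup{NVB}}^{\,\ell}\,\#\mathcal{T}_0$ combined with geometric decay for the finiteness of $M(s)$ for small $s$) reproduce exactly that standard argument.
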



\section{Optimal complexity}\label{section:optimality}

A formal approach to optimal complexity relies on the notion of approximation \linebreak
classes~\cite{bdd2004, stevenson2007, ckns2008, axioms}, which reads as follows: For
\(s > 0\), define
\begin{equation*}
  \| u^\star \|_{\mathbb{A}_s}
  \coloneqq
  \sup_{N \in \mathbb{N}_0}
  \bigl[
    ( N+1)^s \min_{\mathcal{T}_{\rm opt} \in \mathbb{T}_N }
    \eta_{\rm opt}(u^\star_{\rm opt})
  \bigr],
\end{equation*}
where $u_{\rm opt}^\star$ denotes the exact discrete solution associated with the optimal triangulation
$\mathcal{T}_{\rm opt} \in \mathbb{T}_N (\mathcal{T})$. For $s > 0$, we note that $\| u^\star
\|_{\mathbb{A}_s} < \infty$
means that the
sequence of estimators along optimally chosen meshes decreases at least as fast as $(N+1)^{-s}\simeq N^{-s}$.

Finally, we are in the position to present the third main result of this paper, namely optimal complexity of
Algorithm~\ref{algorithm:AILFEM}. Its proof relies, in essence, on perturbation arguments. More precisely,
sufficiently small \(\theta\) and \(\lambda_{\textup{lin}}\) are required to ensure that
Algorithm~\ref{algorithm:AILFEM} guarantees
convergence rate \(s\) with respect to the overall
computational cost (and time) if the solution \(u^\star\)
of~\eqref{eq:weakform} can be approximated at rate \(s\) in the sense of $\| u^\star \|_{\mathbb{A}_s} < \infty$.

\begin{theorem}[optimal complexity]\label{th:optimal_complexity}
  Define $\tau \coloneqq M + 3M \big(\frac{L[3M]}{\alpha}\big)^{1/2} \ge 4M$ with $M$ from~\eqref{eq:exact:bounded}.
  Let $0 < \delta < \min\{\frac{1}{L[5\tau]}, \frac{2\alpha}{L[2 \tau]^2}\}$
  to ensure validity of Theorem~\ref{theorem:uniformBoundedness}. Define
  \begin{equation}\label{eq:lambdastar}
    \lambda_{\textup{lin}}^\star \coloneqq \min\Big\{ 1, \Big(\frac{\alpha\,(1-q_{\mathcal{E}}^2)}{2\,q_{\mathcal{E}}^2}
    \Big)^{1/2}/C_{\textup{stab}}[3M]\Big\}.
  \end{equation}
  Suppose the axioms~\eqref{axiom:stability}--\eqref{axiom:discreteReliability}.
  {
    Let
    \(0 < \theta < 1\), $0 <\lambda_{\textup{alg}}$, and
    \(
      0 < \lambda_{\textup{lin}} < \lambda_{\textup{lin}}^\star
    \)
    such that
    \begin{equation}\label{eq:thetamark}
      0 <
      \theta_{\textup{mark}}
      \coloneqq
      \frac{(\theta^{1/2}+ \, \lambda_{\textup{lin}} / \lambda_{\textup{lin}}^\star)^{2}
      }{(1-\lambda_{\textup{lin}} / \lambda_{\textup{lin}}^\star)^{2}}
      <
      \theta^\star \coloneqq (1 + C_{\textup{stab}}[2M]^2 \, C_{\textup{rel}}^2)^{-1}
      <
      1.
    \end{equation}
  }
  Then, Algorithm~\ref{algorithm:AILFEM} guarantees, for all \(s > 0\), that
  \begin{equation}\label{eq:optimal_complexity}
    \sup_{
      \substack{
        (\ell,k, j) \in \mathcal{Q}
      }
    }
    \mathtt{cost}(\ell, k, i)^{s} \,
    \textup{H}_\ell^{k,j}
    \le
    C_{\textup{opt}} \,
    \max\{
      \| u^\star \|_{\mathbb{A}_s}, \,
      \textup{H}_{0}^{0,0}
    \}.
  \end{equation}
  The constant \(C_{\textup{opt}}>0\) depends only on $q_{\mathcal{E}}$, $\alpha$,
  \(C_{\textup{stab}}[10\tau]\), \(C_{\textup{rel}}\),
  \(C_{\textup{drel}}\), \(C_{\textup{mark}}\), \(C_{\textup{mesh}}\), $C_{\textup{lin}}$,
  $q_{\textup{lin}}$, \(\# \mathcal{T}_{0}\),
  and \(s\).
  In particular, there holds optimal complexity of
  Algorithm~\ref{algorithm:AILFEM}.
\end{theorem}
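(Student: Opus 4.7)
The plan is to follow the by-now standard AFEM optimal-complexity roadmap, adapted to the inexact setting of Algorithm~\ref{algorithm:AILFEM}.

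First, by Corollary~\ref{cor:rates=complexity} it suffices to prove the rate with respect to degrees of freedom, i.e., to bound $\sup_{(\ell,k,i) \in \mathcal{Q}} (\#\mathcal{T}_\ell)^s \textup{H}_\ell^{k,i}$. Moreover, full R-linear convergence (Theorem~\ref{theorem:RLinearConvergence}) combined with a geometric series estimate and nested iteration reduces the task to bounding only the final iterates $\textup{H}_\ell^{\underline{k},\underline{i}} \simeq \textup{H}_\ell$ on each mesh level. Using the mesh-closure estimate of NVB, it thus suffices to establish $(\#\mathcal{T}_\ell - \#\mathcal{T}_0 + 1)^s \, \textup{H}_\ell \lesssim \max\{\| u^\star \|_{\mathbb{A}_s}, \, \textup{H}_{0}^{0,0}\}$ for all $\ell$.

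Next, I would invoke the classical optimality-of-D\"orfler-marking argument. Fix a level $\ell<\underline{\ell}$ and a threshold parameter $\vartheta>0$ to be chosen, and let $\varepsilon^2 \coloneqq \vartheta \, \eta_\ell(u_\ell^{\underline{k},\underline{i}})^2$. By definition of $\| u^\star \|_{\mathbb{A}_s}$, there exists $\mathcal{T}_\varepsilon \in \mathbb{T}$ with $\#\mathcal{T}_\varepsilon - \#\mathcal{T}_0 \lesssim \varepsilon^{-1/s}\, \| u^\star \|_{\mathbb{A}_s}^{1/s}$ such that $\eta_\varepsilon(u_\varepsilon^\star) \le \varepsilon$. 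Letting $\mathcal{T}_h$ be the coarsest common refinement of $\mathcal{T}_\ell$ and $\mathcal{T}_\varepsilon$, the overlay estimate for NVB yields $\#(\mathcal{T}_\ell\setminus\mathcal{T}_h) \lesssim \#\mathcal{T}_\varepsilon - \#\mathcal{T}_0$. The goal is to show that $\mathcal{R} \coloneqq \mathcal{T}_\ell \setminus \mathcal{T}_h$ satisfies the D\"orfler criterion $\theta\, \eta_\ell(u_\ell^{\underline{k},\underline{i}})^2 \le \eta_\ell(\mathcal{R}, u_\ell^{\underline{k},\underline{i}})^2$, so that quasi-optimality~\eqref{eq:doerfler} of the marking gives $\#\mathcal{M}_\ell \lesssim \#\mathcal{R} \lesssim \varepsilon^{-1/s} \| u^\star \|_{\mathbb{A}_s}^{1/s}$.

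The main obstacle is the perturbation analysis needed to pass from the exact Galerkin solutions $u_\ell^\star, u_h^\star$ (for which stability~\eqref{axiom:stability}, reliability~\eqref{axiom:reliability}, discrete reliability~\eqref{axiom:discreteReliability}, and the C\'ea estimate~\eqref{eq:cea} are formulated) to the inexact iterates $u_\ell^{\underline{k},\underline{i}}$ actually used in marking. Stability together with discrete reliability give, up to stability perturbations by $C_{\textup{stab}}[3M]\, \enorm{u_\ell^\star - u_\ell^{\underline{k},\underline{i}}}$, that $\eta_\ell(\mathcal{R}, u_\ell^{\underline{k},\underline{i}})$ inherits the behavior of $\eta_\ell(\mathcal{T}_\ell\setminus \mathcal{T}_h, u_\ell^\star)$. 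The stopping criterion~\eqref{eq:k_stopping_criterion}, energy contraction~\eqref{crucial:energyContraction}, and the equivalence~\eqref{eq:equivalence} together yield $\enorm{u_\ell^\star - u_\ell^{\underline{k},\underline{i}}} \le \bigl(\tfrac{2 q_{\mathcal{E}}^2}{\alpha(1-q_{\mathcal{E}}^2)}\bigr)^{1/2} \lambda_{\textup{lin}} \, \eta_\ell(u_\ell^{\underline{k},\underline{i}})$, exactly as in~\eqref{eq:remark:linearization}. The choice of $\lambda_{\textup{lin}}^\star$ in~\eqref{eq:lambdastar} is calibrated so that the perturbation term is bounded by $(\lambda_{\textup{lin}}/\lambda_{\textup{lin}}^\star) \, \eta_\ell(u_\ell^{\underline{k},\underline{i}})$. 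Combining this with the analogous perturbation $\enorm{u_\ell^\star - u_h^\star} \lesssim \eta_\varepsilon(u_\varepsilon^\star) \le \varepsilon$ via reliability and the C\'ea estimate, and exploiting the Pythagoras-type identity in the $\mathcal{A}$-induced energy, the D\"orfler property for $\mathcal{R}$ reduces to the algebraic inequality $\theta_{\textup{mark}} < \theta^\star$, which is precisely~\eqref{eq:thetamark}; this determines the admissible $\vartheta$ via $\vartheta \simeq \theta^\star - \theta_{\textup{mark}}$.

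Finally, the bound $\#\mathcal{T}_{\ell+1} - \#\mathcal{T}_0 \lesssim \#\mathcal{M}_\ell \lesssim \vartheta^{-1/(2s)}\, \textup{H}_\ell^{-1/s}\, \| u^\star\|_{\mathbb{A}_s}^{1/s}$ rearranges into $(\#\mathcal{T}_{\ell+1})^s \, \textup{H}_\ell \lesssim \| u^\star\|_{\mathbb{A}_s}$. Treating $\ell=0$ separately via $\textup{H}_0^{0,0}$, applying full R-linear convergence to sum geometrically over $0 \le \ell' \le \ell$, and then reinvoking Corollary~\ref{cor:rates=complexity} transforms the rate in degrees of freedom into the cost estimate~\eqref{eq:optimal_complexity} and completes the proof.
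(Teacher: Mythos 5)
Your proposal follows essentially the same route as the paper's proof: reduction to rates in degrees of freedom via Corollary~\ref{cor:rates=complexity}, the comparison/overlay argument combined with the perturbation from $u_\ell^\star$ to $u_\ell^{\underline{k},\underline{i}}$ controlled by $\lambda_{\textup{lin}}/\lambda_{\textup{lin}}^\star$ (this is exactly the content of the paper's Lemma~\ref{lemma:estimatorEquivalence} and the implication~\eqref{eq:equivalence_Doerfler}) to obtain $\#\mathcal{M}_\ell \lesssim \|u^\star\|_{\mathbb{A}_s}^{1/s}\,\eta_\ell(u_\ell^\star)^{-1/s}$, and then the mesh-closure estimate plus the geometric series supplied by full R-linear convergence. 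The only cosmetic differences are that you unpack the comparison lemma which the paper simply cites (as \cite[Lemma~4.14]{axioms}), and that your appeal to a ``Pythagoras-type identity'' is not actually needed there --- stability~\eqref{axiom:stability} and discrete reliability~\eqref{axiom:discreteReliability} alone yield the D\"orfler property for the refined set, which is fortunate since no exact norm orthogonality is available for this nonlinear problem.
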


To prove the theorem, we require the following results on the estimator, which relies on sufficiently small
adaptivity parameter $\lambda_{\textup{lin}}>0$.
\begin{lemma}[{estimator equivalence}]\label{lemma:estimatorEquivalence}
  Suppose the assumptions of Theorem~\ref{theorem:uniformBoundedness}. Recall $\lambda_{\textup{lin}}^\star$
  from~\eqref{eq:lambdastar}.
  Then, for all $(\ell, \underline{k}, \underline{i}) \in \mathcal{Q}$ with $\underline{k}[\ell] < \infty$,
  it holds that
  \begin{subequations}\label{eq:corrigendum:equivalence}
    \begin{alignat}{2}
      &\eta_\ell(u_\ell^\star) &&\le
      ( 1 + \lambda_{\textup{lin}} / \lambda_{\textup{lin}}^\star ) \, \eta_\ell(u_\ell^{\underline{k},
      \underline{i}}),\label{eq:estimatorEquivalenceUpper}
      \intertext{and, for $0<\lambda_{\textup{lin}}< \lambda_{\textup{lin}}^\star$, we furthermore have that}
      ( 1 - \lambda_{\textup{lin}} / \lambda_{\textup{lin}}^\star ) \,\eta_\ell(u_\ell^{\underline{k},
      \underline{i}})\label{eq:estimatorEquivalenceLower}
      \le  \,& \eta_\ell(u_\ell^{\star})&&.
    \end{alignat}
  \end{subequations}
  For $0 < \lambda_{\textup{lin}} < \lambda_{\textup{lin}}^\star$, Dörfler marking for \(u_\ell^\star\)
  with parameter \(\theta_{\textup{mark}}\) from~\eqref{eq:thetamark} implies Dörfler marking for
  \(u_\ell^{\underline{k},
  \underline{i}}\) with parameter \(\theta\), i.e., for any
  \(\mathcal{R}_{\ell}
  \subseteq \mathcal{T}_{\ell}\), there holds the implication
  \begin{equation}\label{eq:equivalence_Doerfler}
    \theta_{\textup{mark}} \, \eta_{\ell}(u_\ell^\star)^2
    \le
    \eta_{\ell}(\mathcal{R}_{\ell}; u_\ell^\star)^2
    \quad
    \implies
    \quad
    \theta \, \eta_{\ell}(u_\ell^{\underline{k}, \underline{i}})^2
    \le
    \eta_{\ell}(\mathcal{R}_{\ell}; u_\ell^{\underline{k}, \underline{i}})^2.
  \end{equation}
\end{lemma}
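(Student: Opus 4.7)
The plan is to reduce all three assertions to a single perturbation bound, namely
\[
C_{\textup{stab}}[3M]\,\enorm{u_\ell^\star - u_\ell^{\underline{k},\underline{i}}} \le (\lambda_{\textup{lin}}/\lambda_{\textup{lin}}^\star)\,\eta_\ell(u_\ell^{\underline{k},\underline{i}}).
\]
Once this is in hand, the estimator equivalences \eqref{eq:estimatorEquivalenceUpper}--\eqref{eq:estimatorEquivalenceLower} follow immediately from global stability~\eqref{axiom:stability}, while the Dörfler transfer~\eqref{eq:equivalence_Doerfler} follows from local stability combined with some arithmetic centered on the definition~\eqref{eq:thetamark} of $\theta_{\textup{mark}}$.

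The first step is to establish the perturbation bound. The idea is to apply the chain displayed in~\eqref{eq:remark:linearization} at index $k+1=\underline{k}[\ell]$: the energy--norm equivalence~\eqref{eq:equivalence}, the inverted form of energy contraction~\eqref{crucial:energyContraction}, and the accuracy part of the stopping criterion~\eqref{eq:k_stopping_criterion} combine to yield
\[
\frac{\alpha}{2}\,\enorm{u_\ell^\star - u_\ell^{\underline{k},\underline{i}}}^2 \le \frac{q_{\mathcal{E}}^2}{1-q_{\mathcal{E}}^2}\,\lambda_{\textup{lin}}^2\,\eta_\ell(u_\ell^{\underline{k},\underline{i}})^2.
\]
Taking square roots and multiplying by $C_{\textup{stab}}[3M]$, the very definition~\eqref{eq:lambdastar} of $\lambda_{\textup{lin}}^\star$ collapses the arising prefactor to precisely $\lambda_{\textup{lin}}/\lambda_{\textup{lin}}^\star$, yielding the claimed estimate.

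The second step is to invoke stability. Since $\enorm{u_\ell^\star}\le M$ by~\eqref{eq:exact:bounded} and $\enorm{u_\ell^{\underline{k},\underline{i}}}\le 2M$ by the iteration part of~\eqref{eq:k_stopping_criterion}, one has $\max\{\enorm{u_\ell^\star}, \enorm{u_\ell^\star - u_\ell^{\underline{k},\underline{i}}}\} \le 3M$, so stability~\eqref{axiom:stability} applies with $\vartheta = 3M$ and an arbitrary $\mathcal{U}_\ell \subseteq \mathcal{T}_\ell$, giving
\[
\bigl|\eta_\ell(\mathcal{U}_\ell; u_\ell^\star) - \eta_\ell(\mathcal{U}_\ell; u_\ell^{\underline{k},\underline{i}})\bigr| \le (\lambda_{\textup{lin}}/\lambda_{\textup{lin}}^\star)\,\eta_\ell(u_\ell^{\underline{k},\underline{i}}).
\]
The choice $\mathcal{U}_\ell = \mathcal{T}_\ell$ together with the (reverse) triangle inequality then yields \eqref{eq:estimatorEquivalenceUpper} unconditionally and, provided $\lambda_{\textup{lin}}<\lambda_{\textup{lin}}^\star$, also \eqref{eq:estimatorEquivalenceLower}.

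For the Dörfler implication~\eqref{eq:equivalence_Doerfler}, the plan is to apply the preceding local stability estimate with $\mathcal{U}_\ell = \mathcal{R}_\ell$, estimate the left-hand side of the hypothesis from below using \eqref{eq:estimatorEquivalenceLower} and the right-hand side from above using local stability, and rearrange to arrive at
\[
\bigl[\theta_{\textup{mark}}^{1/2}\,(1-\lambda_{\textup{lin}}/\lambda_{\textup{lin}}^\star) - \lambda_{\textup{lin}}/\lambda_{\textup{lin}}^\star\bigr]\,\eta_\ell(u_\ell^{\underline{k},\underline{i}}) \le \eta_\ell(\mathcal{R}_\ell; u_\ell^{\underline{k},\underline{i}}).
\]
The definition~\eqref{eq:thetamark} of $\theta_{\textup{mark}}$ is engineered precisely so that the bracketed factor simplifies to $\theta^{1/2}$; squaring then completes the argument. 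There is no conceptually hard step — the heavy lifting (energy contraction and uniform boundedness) was done in Section~\ref{section:algorithm} — and the main care needed is to keep the stability parameter fixed at $\vartheta = 3M$ throughout so that $C_{\textup{stab}}[3M]$ matches the constant baked into~\eqref{eq:lambdastar}.
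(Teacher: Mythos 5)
Your proposal is correct and follows essentially the same route as the paper's proof: the linearization \textsl{a~posteriori} bound~\eqref{eq:remark:linearization} combined with stability~\eqref{axiom:stability} (with $\vartheta = 3M$) gives the perturbation estimate whose prefactor is absorbed into $\lambda_{\textup{lin}}/\lambda_{\textup{lin}}^\star$ by the definition~\eqref{eq:lambdastar}, and the Dörfler transfer then follows from the same local stability estimate on $\mathcal{R}_\ell$ together with the arithmetic identity $\theta_{\textup{mark}}^{1/2}\,(1-\lambda_{\textup{lin}}/\lambda_{\textup{lin}}^\star) - \lambda_{\textup{lin}}/\lambda_{\textup{lin}}^\star = \theta^{1/2}$. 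The only cosmetic difference is that you isolate the perturbation bound as a single displayed inequality up front (where, strictly, the prefactor is $\le \lambda_{\textup{lin}}/\lambda_{\textup{lin}}^\star$ rather than equal to it, owing to the $\min\{1,\cdot\}$ in~\eqref{eq:lambdastar}), which changes nothing.
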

\begin{proof}
  The proof consists of two steps.

  {\bf Step 1.}  First, we obtain from Remark~\ref{remark:stoppingAndMark}{\rm(ii)} that
  \begin{equation*}%
    \frac{\alpha}{2}\, \enorm{u_\ell^\star - u_\ell^{\underline{k}, \underline{i}}}^2
    \eqreff{eq:remark:linearization}\le
    \frac{\lambda_{\textup{lin}}^2\, q_{\mathcal{E}}^2}{1-q_{\mathcal{E}}^2} \,
    \eta_\ell(u_\ell^{\underline{k}, \underline{i}})^2.
  \end{equation*}
  Exploiting this together with stability~\eqref{axiom:stability}, nested
  iteration~\eqref{crucial:energyContraction}, and boundedness of the exact discrete
  solution~\eqref{eq:exact:bounded}, we obtain for any $\mathcal{U}_{\ell} \subseteq \mathcal{T}_\ell$ that
  \begin{align}
    \begin{split}\label{eq:estimatorEquivalenceStep2}
      \eta_\ell(\mathcal{U}_{\ell}; u_\ell^\star)
      \, &\eqreff*{axiom:stability}\le\, \,  \eta_\ell(\mathcal{U}_{\ell}; u_\ell^{\underline{k},
      \underline{i}})+ C_{\textup{stab}}[3M] \,
      \enorm{u^\star_\ell - u_\ell^{\underline{k}, \underline{i}}} \\
      & \le \, \, \eta_\ell(\mathcal{U}_{\ell}; u_\ell^{\underline{k}, \underline{i}}) +
      \lambda_{\textup{lin}}\,C_{\textup{stab}}[3M]\,\Big(\frac{2\,q_{\mathcal{E}}^2}{\alpha\,(1-q_{\mathcal{E}}^2)}
      \Big)^{1/2}\,
      \eta_\ell(u_\ell^{\underline{k}, \underline{i}}) \\
      &= \eta_\ell(\mathcal{U}_{\ell}; u_\ell^{\underline{k}, \underline{i}}) +
      \lambda_{\textup{lin}}/\lambda_{\textup{lin}}^\star\,
      \eta_\ell(u_\ell^{\underline{k}, \underline{i}}).
    \end{split}
  \end{align}
  The choice $\mathcal{U}_{\ell} = \mathcal{T}_{\ell}$ yields~\eqref{eq:estimatorEquivalenceUpper}.  The same
  arguments prove that
  \begin{equation}\label{eq:estimatorEquivalenceStep}
    \eta_\ell(\mathcal{U}_{\ell}; u_\ell^{\underline{k}, \underline{i}}) \le  \eta_\ell(\mathcal{U}_{\ell};
    u_\ell^{\star}) +
    \lambda_{\textup{lin}}/\lambda_{\textup{lin}}^\star\,\eta_\ell(u_\ell^{\underline{k}, \underline{i}}).
  \end{equation}
  For $0<\lambda_{\textup{lin}} < \lambda_{\textup{lin}}^\star$ and $\mathcal{U}_{\ell} = \mathcal{T}_{\ell}$, the
  rearrangement
  of~\eqref{eq:estimatorEquivalenceStep} proves~\eqref{eq:estimatorEquivalenceLower}.

  {\bf Step 2.} Let \(\mathcal{R}_{\ell} \subseteq \mathcal{T}_\ell\)
  satisfy
  \(\theta_{\textup{mark}}^{1/2} \, \eta_{\ell}(u_\ell^\star)
    \le
    \eta_{\ell}(\mathcal{R}_{\ell}; u_\ell^\star)
  \).
  Then,~\eqref{eq:estimatorEquivalenceStep2}--\eqref{eq:estimatorEquivalenceStep} prove
  \begin{align*}
    \bigl[1 - \lambda_{\textup{lin}} / \lambda_{\rm
    lin}^\star \bigr] \, \theta_{\textup{mark}}^{1/2} \,
    \eta_{\ell}(u_\ell^{\underline{k}, \underline{i}}) \,
    &\eqreff*{eq:estimatorEquivalenceLower}\le \,
    \theta_{\textup{mark}}^{1/2} \, \eta_{\ell}(u_\ell^\star)
    \le
    \eta_{\ell}(\mathcal{R}_{\ell}; u_\ell^\star)
    \\
    &\eqreff*{eq:estimatorEquivalenceStep2}\le \,
    \eta_{\ell}(\mathcal{R}_{\ell}; u_\ell^{\underline{k}, \underline{i}})
    +
    \lambda_{\textup{lin}} / \lambda_{\textup{lin}}^\star \,
    \eta_{\ell}(u_\ell^{\underline{k}, \underline{i}})
    \\
    &\eqreff*{eq:thetamark}= \,
    \eta_{\ell}(\mathcal{R}_{\ell}; u_\ell^{\underline{k}, \underline{i}})
    +
    \bigl[\theta_{\textup{mark}}^{1/2} \, \bigl(1\! -\! \lambda_{\textup{lin}} / \lambda_{\rm
    lin}^\star\bigr) - \theta^{1/2}\bigr] \,
    \eta_{\ell}(u_\ell^{\underline{k}, \underline{i}}).
  \end{align*}
  This yields
  \(
    \theta^{1/2} \, \eta_{\ell}(u_\ell^{\underline{k}, \underline{i}})
    \le
    \eta_{\ell}(\mathcal{R}_{\ell}; u_\ell^{\underline{k}, \underline{i}})
  \)
  and concludes the proof.
\end{proof}

\begin{proof}[{\bf \textit{Proof of Theorem~\ref{th:optimal_complexity}}}]
  By Corollary~\ref{cor:rates=complexity}, it is enough to show
  \begin{equation}\label{eq:proof_optimal_complexity}
    \sup \limits_{\substack{(\ell, k, i) \in \mathcal{Q}}}
    \bigl(\# \mathcal{T}_{\ell}\bigr)^{s} \, \textup{H}_{\ell}^{k, i}
    \lesssim
    \max\{
      \| u^\star \|_{\mathbb{A}_s},
      \textup{H}_{0}^{0,0}
    \}.
  \end{equation}
  Without loss of generality, we may suppose that
  \(\| u^\star \|_{\mathbb{A}_s} < \infty\). The proof is subdivided into two steps.

  \textbf{Step 1.}
  Let
  \(
    0
    <
    \theta_{\textup{mark}}
    \coloneqq
    (\theta^{1/2}+\lambda_{\textup{lin}} / \lambda_{\textup{lin}}^\star)^{2}  \, (1-\lambda_{\textup{lin}} /
    \lambda_{\textup{lin}}^\star)^{-{2}}
    <
    \theta^\star \coloneqq (1 + C_{\textup{stab}}[2M]^2 \, C_{\textup{rel}}^2)^{-1}
  \)
  and fix any $ 0 \le \ell' \le \underline{\ell} -1$. The validity of~\eqref{axiom:discreteReliability}
  and~\cite[Lemma~4.14]{axioms}
  guarantee the existence of a set \(\mathcal{R}_{\ell^\prime} \subseteq
  \mathcal{T}_{\ell^\prime}\) such
  that
  \begin{align}
    \# \mathcal{R}_{\ell^\prime}
    &\lesssim
    \| u^\star \|_{\mathbb{A}_s}^{1/s} \,
    [\eta_{\ell^\prime}(u_{\ell^\prime}^{\star})]^{-1/s}
    \label{eq:R_estimate},
    \\
    \theta_{\textup{mark}} \, \eta_{\ell^\prime}(u_{\ell^\prime}^{\star})
    &\le
    \eta_{\ell^\prime}(\mathcal{R}_{\ell^\prime},  u_{\ell^\prime}^\star),
    \notag
  \end{align}
  where the hidden constant depends only on \eqref{axiom:stability}--\eqref{axiom:discreteReliability}. By
  means of~\eqref{eq:equivalence_Doerfler} in Lemma~\ref{lemma:estimatorEquivalence}, we infer that
  $\mathcal{R}_{\ell'}$ satisfies the Dörfler marking~\eqref{eq:doerfler} in Algorithm~\ref{algorithm:AILFEM} with
  $\theta$, i.e., $  \theta \, \eta_{\ell'}(u_{\ell'}^{\underline{k}, \underline{i}})^2
  \le
  \eta_{\ell'}(\mathcal{R}_{\ell'}; u_{\ell'}^{\underline{k}, \underline{i}})^2$. Hence,
  since $0 < \theta <
  \theta_{\textup{mark}} < \theta^\star$, the optimality of Dörfler marking proves
  \begin{equation}\label{eq2:R_estimator_final}
    \# \mathcal{M}_{\ell'} \le C_{\textup{mark}} \, \# \mathcal{R}_{\ell'} \eqreff{eq:R_estimate}{\lesssim}
    \| u^\star \|_{\mathbb{A}_s}^{1/s} \,
    \bigl[
      \eta_{\ell^\prime}(u_{\ell^\prime}^{\star})
    \bigr]^{-1/s}.
  \end{equation}
  Moreover, full R-linear convergence~\eqref{eq:RLinearConvergence} together with \textsl{a~posteriori} error
  estimates for the final iterates~\eqref{eq:remark:linearization} and~\eqref{eq:algebraic:estimate},  the
  norm-energy equivalence~\eqref{eq:f4}, and estimator equivalence~\eqref{eq:corrigendum:equivalence} prove
  \begin{align}
    \begin{split}\label{eq:Delta_estimator}
      \textup{H}_{\ell'+1}^{0, \underline{i}}
      \, \,&   \eqreff*{eq:RLinearConvergence}\lesssim \,\,
      \textup{H}_{\ell'}^{\underline{k}, \underline{i}}
      \eqreff{eq:quasi-error}=   \enorm{u_{\ell'}^\star - u_{\ell'}^{\underline{k}, \underline{i}}} +
        \enorm{u_{\ell'}^{\underline{k},\star} - u_{\ell'}^{\underline{k},\underline{i}}}
      + \eta_{\ell'}(u_{\ell'}^{\underline{k},\underline{i}}) \\
      &\stackrel{\mathclap{\eqref{eq:algebraic:estimate},~\eqref{eq:f4}}}\lesssim \, \,  \, \, \, \,
        \enorm{u_{\ell'}^\star - u_{\ell'}^{\underline{k}, \underline{i}}} +
        [\mathcal{E}(u_{\ell'}^{\underline{k}, 0}) -
        \mathcal{E}(u_{\ell'}^{\underline{k},\underline{i}})]^{1/2}
      + \eta_{\ell'}(u_{\ell'}^{\underline{k},\underline{i}})\\
      &\stackrel{\mathclap{\eqref{eq:remark:linearization},~\eqref{eq:k_stopping_criterion}}}\lesssim \,
        \,  \, \, \, \,
        \eta_{\ell'}(u_{\ell'}^{\underline{k}, \underline{i}}) \eqreff{eq:corrigendum:equivalence}\lesssim
      \eta_{\ell'}(u_{\ell'}^\star).
    \end{split}
  \end{align}
  Consequently, a combination of~\eqref{eq2:R_estimator_final} and~\eqref{eq:Delta_estimator} concludes that
  \begin{equation}\label{eq:R_Delta}
    \# \mathcal{M}_{\ell'}
    \eqreff{eq2:R_estimator_final}\lesssim
    \| u^\star \|_{\mathbb{A}_s}^{1/s} \,
    \bigl[
      \eta_{\ell^\prime}(u_{\ell^\prime}^{\star})
    \bigr]^{-1/s}
    \eqreff{eq:Delta_estimator}\lesssim
    \| u^\star \|_{\mathbb{A}_s}^{1/s} \,
    \bigl[
    \textup{H}_{\ell^\prime+1}^{0, \underline{i}}\bigr]^{-1/s}.
  \end{equation}

  \textbf{Step 2.} For $(\ell, k, i) \in \mathcal{Q}$, full R-linear convergence \eqref{eq:RLinearConvergence} and
  the geometric series prove
  \begin{equation}\label{eq:lin_cv_sum}
    \! \! \! \! \! \sum_{
      \substack{
        (\ell^{\prime},k^{\prime},i^{\prime}) \in \mathcal{Q}
        \\
        |\ell^{\prime},k^{\prime},i^{\prime}|
        \le
        |\ell,k,i|
      }
    } \! \! \!
    (
      \textup{H}_{\ell^{\prime}}^{k^{\prime}, i^{\prime}}
    )^{-1/s}
    \eqreff{eq:RLinearConvergence}
    \lesssim
    (\textup{H}_{\ell}^{k, i})^{-1/s}
    \! \! \! \! \! \! \! \!
    \sum_{
      \substack{
        (\ell^{\prime},k^{\prime},i^{\prime}) \in \mathcal{Q}
        \\
        |\ell^{\prime},k^{\prime},i^{\prime}|
        \le
        |\ell,k,i|
      }
    }
    (q_{\textup{lin}}^{1/s})^{|\ell, k, i| - |\ell', k', i'|}
    \lesssim
    (\textup{H}_{\ell}^{k, i})^{-1/s}.
  \end{equation}
  We recall the mesh-closure estimate~\cite{bdd2004, stevenson2008, kpp2013, dgs2023}
  \begin{equation}\label{eq:meshclosure}
    \# \mathcal{T}_\ell - \# \mathcal{T}_0
    \le
    C_{\textup{mesh}} \sum_{\ell' = 0}^{\ell-1} \# \mathcal{M}_{\ell'} \quad \text{ for
    all } \ell \ge 0,
  \end{equation}
  where $C_{\textup{mesh}} > 1$ depends only on $\mathcal{T}_{0}$ and hence in particular on the dimension
  $d$.
  For $(\ell, k, i) \in \mathcal{Q}$, the preceding estimates show that
  \begin{align*}
    \# \mathcal{T}_{\ell} - \# \mathcal{T}_{0}
    \, \,&\eqreff{eq:meshclosure}
    \lesssim \, \,
    \sum_{\ell^\prime = 0}^{\ell-1}  \# \mathcal{M}_{\ell^\prime} \\
    &\eqreff{eq:R_Delta}
    \lesssim  \,
    \| u^\star \|_{\mathbb{A}_s}^{1/s} \,
    \sum_{\ell^{\prime} = 0}^{\ell-1}
    \bigl(
      \textup{H}_{\ell^{\prime}+1}^{0, \underline{i}}
    \bigr)^{-1/s}
    \\
    &\le
    \| u^\star \|_{\mathbb{A}_s}^{1/s} \! \!
    \mkern-12mu  \sum_{
      \substack{
        (\ell^{\prime},k^{\prime},i^{\prime}) \in \mathcal{Q}
        \\
        |\ell^{\prime},k^{\prime},i^{\prime}|
        \le
        |\ell,k,i|
      }
    }
    \mkern-12mu
    (\textup{H}_{\ell'}^{k', i'})^{-1/s}
    \eqreff{eq:lin_cv_sum}
    \lesssim \, \,
    \| u^\star \|_{\mathbb{A}_s}^{1/s} \,
    (\textup{H}_{\ell}^{k, i})^{-1/s}.
  \end{align*}
  Note that \(1 \le \# \mathcal{T}_{\ell} - \# \mathcal{T}_{0}\)
  yields
  $\# \mathcal{T}_{\ell} - \# \mathcal{T}_{0} +1 \le 2 \, (\#\mathcal{T}_{\ell} - \# \mathcal{T}_{0})$. Hence, we
  get that
  \begin{subequations}\label{eq:optimality}
    \begin{equation}
      (\# \mathcal{T}_{\ell} - \# \mathcal{T}_{0} + 1)^{s} \, \textup{H}_{\ell}^{k, i}
      \lesssim
      \| u^\star \|_{\mathbb{A}_s} \quad \text{ for all } (\ell, k, i) \in \mathcal{Q} \text{ with } \ell \ge 1.
    \end{equation}
    Theorem~\ref{theorem:RLinearConvergence} proves that
    \begin{equation}
      (\# \mathcal{T}_{\ell} - \# \mathcal{T}_{\ell} + 1)^{s} \,
      \textup{H}_{\ell}^{k, i}
      =
      \textup{H}_{0}^{k, i}
      \, \, \eqreff*{eq:RLinearConvergence}\lesssim \, \,
      \textup{H}_{0}^{0, 0} \quad \text{ for all } (\ell, k, i) \in \mathcal{Q} \text{ with } \ell =0.
    \end{equation}
  \end{subequations}
  For
  all
  $\mathcal{T}_{\ell} \in \mathbb{T}$, elementary calculation~\cite[Lemma~22]{bhp2017} shows that
  \begin{equation}\label{eq:bhp-lemma22}
    \# \mathcal{T}_{\ell} - \# \mathcal{T}_{0} +1
    \le
    \# \mathcal{T}_{\ell}
    \le
    \# \mathcal{T}_{0} \, (\# \mathcal{T}_{\ell} - \# \mathcal{T}_{0} +1).
  \end{equation}
  For all $(\ell, k, i) \in \mathcal{Q}$, we thus arrive at
  \begin{equation*}
    (\# \mathcal{T}_{\ell})^{s} \, \textup{H}_\ell^{k,i}
    \eqreff{eq:bhp-lemma22}
    \lesssim
    (\# \mathcal{T}_{\ell} - \# \mathcal{T}_{0} + 1)^{s} \, \textup{H}_\ell^{k,i}
    \eqreff{eq:optimality}
    \lesssim
    \max \{
      \| u^\star \|_{\mathbb{A}_s},
      \textup{H}_{0}^{0,0}
    \}.
  \end{equation*}
  This concludes the proof of~\eqref{eq:proof_optimal_complexity}.
\end{proof}


\section{Numerical experiments}\label{section:numerics}
The experiments are performed with the open-source software package MooAFEM~\cite{ip2022}. In the following,
Algorithm~\ref{algorithm:AILFEM} employs the optimal local \(hp\)-robust multigrid \linebreak
method~\cite{imps2022} as  algebraic solver. We remark that in our implementation the
condition~\eqref{eq:imin} is slightly relaxed to $|\mathcal{E}(u_\ell^{k, 0}) - \mathcal{E}(u_\ell^{k,i})| < 10^{-12}
\eqqcolon \mathtt{tol}$.

\begin{experiment}[modified sine-Gordon equation~{\cite[Experiment~5.1]{ahw2022}}]\label{example:gordon1}
  For $\Omega = (0,1)^2$, we consider
  \begin{equation}\label{eq:gordon1}
    -\Delta u^\star + (u^\star)^3+ \sin(u^\star) = f \quad \text{ in } \Omega \quad \text{ subject to }
    \quad u^\star= 0 \text{\ on } \partial \Omega
  \end{equation}
  with the monotone semilinearity $b(v) = v^{3}+ \sin(v)$, which fits into the locally Lipschitz continuous
  framework (cf.~\cite[Experiment~26]{bbimp2022cost}).  We choose $f$ such  that
  \begin{equation*}
    u^\star(x) = \sin(\pi x_1)\sin(\pi x_2).
  \end{equation*}
  For $T \in \mathcal{T}_H$, the refinement indicators $\eta_H(T; \cdot)$ read
  \begin{equation}\label{eq:estimator}
    \eta_H(T, v_H)^2 \coloneqq h_T^2 \,\| f + \Delta v_H  - b(v_H) \|_{L^2(T)}^2  +
    h_T \, \| \lbrack \! \lbrack\nabla v_H  \, \cdot \, \boldsymbol{n}  \rbrack \! \rbrack \|_{L^2(\partial T
    \cap \Omega)}^2.
  \end{equation}
  For $p=2$, damping parameter $\delta = 0.3$, and $i_{\textup{min}} = 1$, we stop the computation as soon as
  $\eta_\ell(u_\ell^{\underline{k}, \underline{i}})< 10^{-4}$. Table~\ref{table:parameterChoice} depicts the values
  of the weighted cost
  \begin{equation}\label{eq:weightedCost}
    \eta_\ell(u_\ell^{\underline{k}, \underline{i}}) \,\mathtt{cost}(\ell, \underline{k}, \underline{i})^{p/2}
  \end{equation}
  to determine the best parameter choice. For a fair comparison, the weighted cost from~\eqref{eq:weightedCost} balances the
    overachievement of the
  prescribed tolerance with the associated cumulative computational cost. We observe
  that the parameters $\theta \in \{0.3, 0.4\}$ and
  $\lambda_{\textup{lin}} \ge 0.5$ perform comparably well. The parameter $\lambda_{\textup{alg}}$ may be used for
  fine-tuning, but for moderate $\theta \in \{0.2, 0.3, 0.4, 0.5, 0.6\}$ and as soon as $\lambda_{\textup{lin}}$
  is set, the influence is comparably low.
  \begin{table}
    \centering
    \begin{tabular}{c}
      \includegraphics[width=0.9\textwidth]{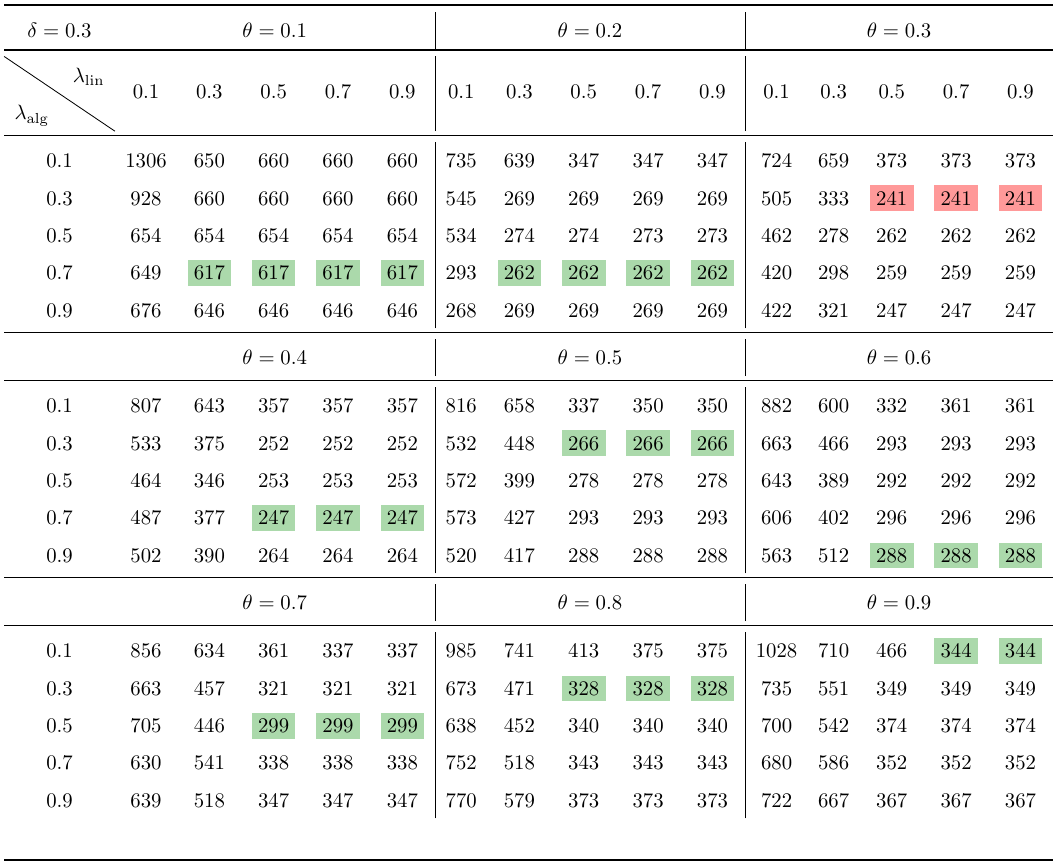}
    \end{tabular}
    \caption{The weighted cost~\eqref{eq:weightedCost} with \(p=2\) of the sine-Gordon
      problem~\eqref{eq:gordon1} for
      different adaptivity parameters $\lambda_{\textup{lin}}, \lambda_{\textup{alg}}, \theta \in \{0.1, 0.2, \ldots,
      0.9\}$ and fixed damping parameter $\delta =0.3$, where the mesh refinement is stopped if
      $\eta_\ell(u_\ell^{\underline{k}, \underline{i}}) < 10^{-4}$, where the $\theta$-blockwise minimal
      values are highlighted
    in \colorbox{pyGreen!40}{green} and the overall minimal value in~\colorbox{pyRed!40}{red}.}
    \label{table:parameterChoice}
  \end{table}

  For the following experiments, we set $\delta =0.3$, $\theta =0.3$, $\lambda_{\textup{lin}}=0.7$, and
  $\lambda_{\textup{alg}}=0.3$. Figure~\ref{fig:sineGordon1} depicts the error $\enorm{u^\star - u_\ell^{\underline{k},
  \underline{i}}}$ and the estimator $\eta_\ell(u_\ell^{\underline{k}, \underline{i}})$ over
  $\mathtt{cost}(\ell,\underline{k}, \underline{i})$
  (left) and over the cumulative time in seconds (right) for the displayed polynomial degrees $p \in \{1,2,3\}$.
  \begin{figure}
    \centering
    {\includegraphics[width=0.45\textwidth]{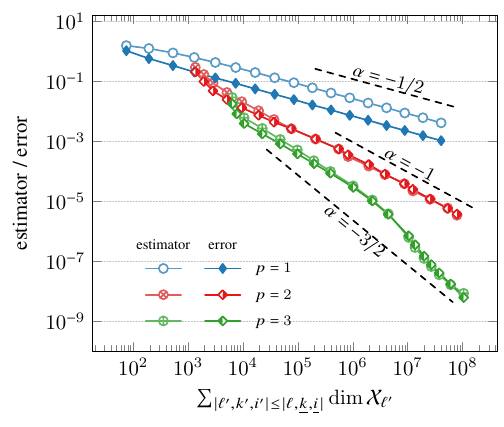}}\quad
    {\includegraphics[width=0.45\textwidth]{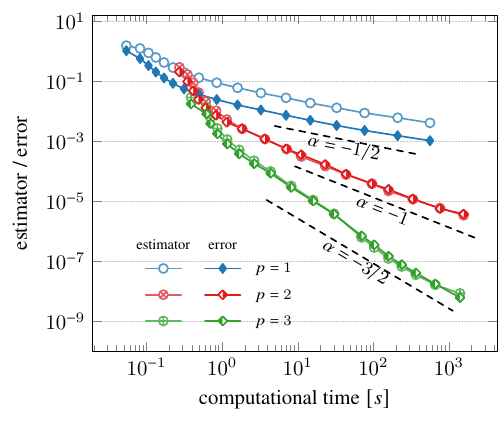}}
    \caption{Experiment~\ref{example:gordon1}: Convergence plots of the error
      $\vvvert u^\star-u_\ell^{\underline{k},\underline{i}} \vvvert$ (diamond) and the error estimator
      $\eta_\ell(u_\ell^{\underline{k}, \underline{i}})$ (circle) over $\mathtt{cost}(\ell, \underline{k}, \underline{i})$ (left) and over computational time in seconds (right).
      }\label{fig:sineGordon1}
  \end{figure}
  In both plots, the decay rate is of (expected) optimal order $p/2$ for $p \in \{1,2, 3\}$.
\end{experiment}

\begin{experiment}\label{experiment:quasilinear}
  We consider a globally Lipschitz continuous example  from~\cite[Section~5.3]{hpw2021} with Lipschitz
  constant $L = 2$ and monotonicity constant $\alpha = 1- 2 \exp(-\frac{3}{2})$ and hence $\delta =
  \alpha/L^2 \approx 0.138434919925785$ is a viable choice. For $d = 2$ and the L-shaped domain $(-1,1)^2
  \setminus \bigl([0,1] \times [-1,0]\bigr) \subset \mathbb{R}^2$, we seek $u^\star \in H_0^1(\Omega)$ such that
  \begin{equation*}
    - \operatorname{div}(\mu(|\nabla u^\star|^2) \nabla u^\star) = f \quad \text{ in } \Omega,
  \end{equation*}
  where $f$ is chosen such that $u^\star$ reads in polar coordinates $(r, \varphi) \in \mathbb{R}_{>0} \times
  [0, 2 \pi)$
  \[
    u^\star(r, \varphi) = r^{2/3} \, \sin\Big( \frac{2 \, \varphi}{3}\Big) \, (1-r \cos \varphi ) \, (1+r
    \cos \varphi) \, (1-r \sin \varphi ) \, (1+r \sin \varphi) \, \cos \varphi.
  \]
  This example has a singularity at the origin. We consider $p=1$, since stability~\eqref{axiom:stability} in
  the quasilinear case remains open for $p > 1$. Moreover, the parameters are $\theta =0.3$, $\lambda_{\textup{lin}}
  =0.7$, $\lambda_{\textup{alg}} =0.3$, and $i_{\textup{min}} = 1$.

  In Figure~\ref{fig:quasilinear}, we plot a sample solution (right) as well as convergence results of
  various error components (left) over the degrees of freedom. We observe that after a preasymptotic phase,
  optimal convergence rate $-1/2$ is restored for the exact error (diamond), the quasi-error
  $\textup{H}_\ell^{\underline{k},\underline{i}}$, the linearization error
  $\mathcal{E}(u_\ell^{\underline{k},0}) -
  \mathcal{E}(u_\ell^{\underline{k}, \underline{i}})$ (triangle), and the error estimator
  $\eta_\ell(u_\ell^{\underline{k},
  \underline{i}})$ (circle).
  \begin{figure}[h!]
    \centering
    \includegraphics[width=0.45\textwidth, valign=t]{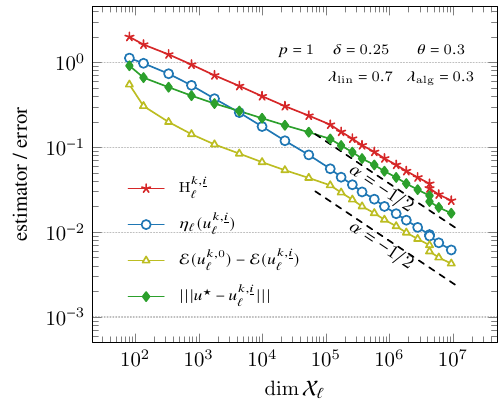}\quad
    \hfil    \includegraphics[width=0.45\textwidth,valign=t]{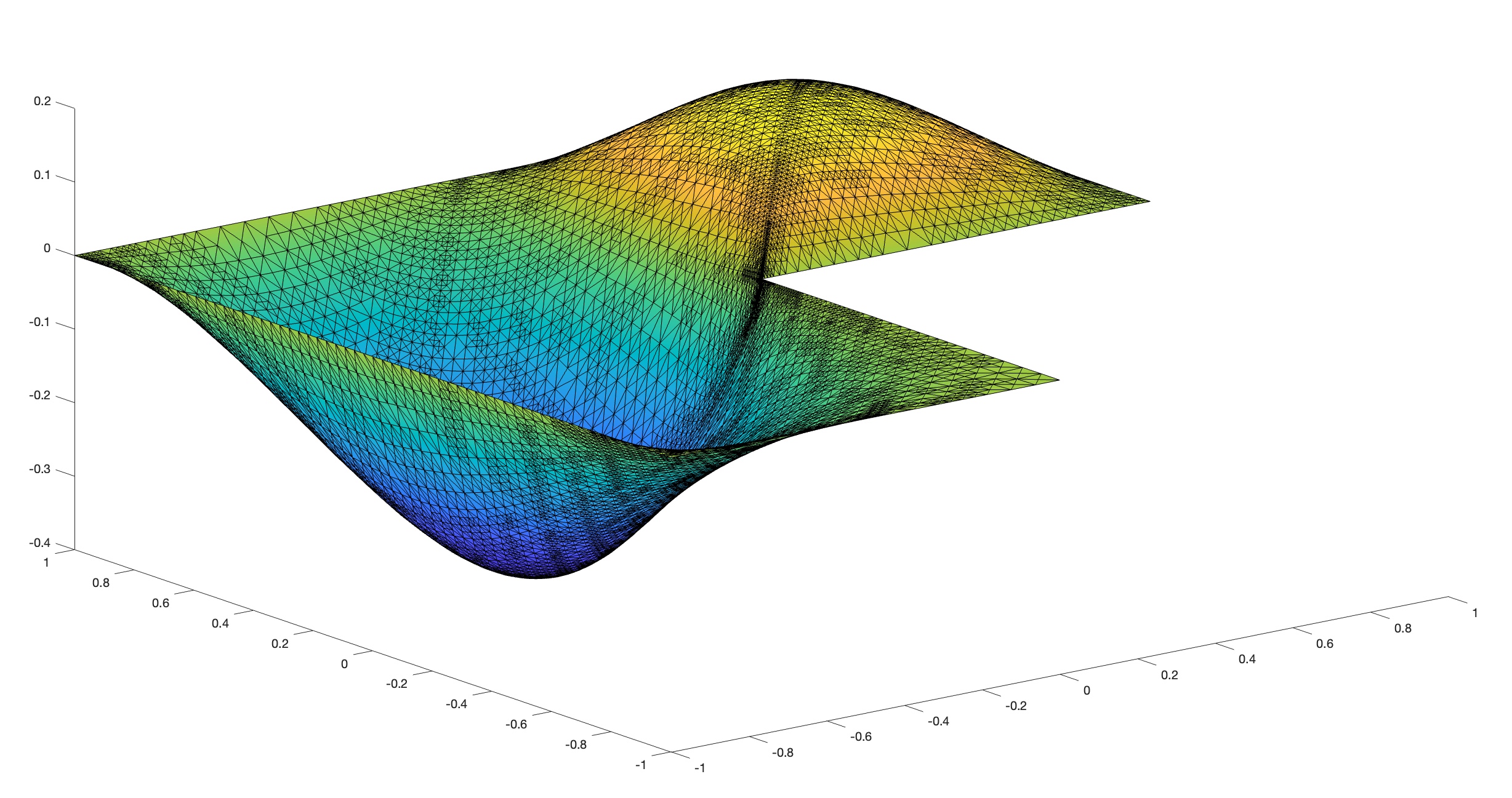}
    \caption{Experiment~\ref{experiment:quasilinear}: Convergence plots of various error components over the
      degrees of freedom (left). Right: Plot of the approximate solution $u_{13}^{1, 1}$ on $\mathcal{X}_{13}$ with $\#
    \mathcal{X}_{13} =10209$.}\label{fig:quasilinear}
  \end{figure}
\end{experiment}

\begin{experiment}[singularly perturbed sine-Gordon equation]\label{example:gordon2}
  This example is a variant of~{\cite[Experiment~5.2]{ahw2022}}. For $d=2$ and the L-shaped domain $(-1,1)^2
  \setminus \bigl([0,1] \times [-1,0]\bigr) \subset \mathbb{R}^2$, let $\varepsilon = 10^{-5}$ and consider
  \begin{equation*}
    -\varepsilon \Delta u^\star + u^\star+ (u^\star)^3+ \sin(u^\star) = 1 \quad \text{ in } \Omega \quad
    \text{ subject to } \quad u^\star= 0\text{\ on } \partial \Omega,
  \end{equation*}
  with the monotone semilinearity $b(v) = v^3+ \sin(v)$. In this case, the exact solution $u^\star$ is
  unknown.  We use the energy norm $\enorm{\, \cdot\, }^{2} = \varepsilon\, \dual{\nabla \cdot }{\nabla \cdot
  } + \dual{\cdot }{\cdot }$. The experiment is conducted with damping parameter $\delta =0.1$, $\lambda_{\rm
  alg} =0.7$, $\theta = 0.3$, and $i_{\textup{min}} = 1$. The refinement indicator~\eqref{eq:estimator} is modified
  along the lines of~\cite[Remark~4.14]{v2013} to
  \begin{equation*}
    \eta_H(T, v_H)^2 \coloneqq \hslash_T^2 \,\| f + \varepsilon \Delta v_H
    -v_H - b(v_H) \|_{L^2(T)}^2  + \hslash_T \, \| \lbrack \! \lbrack \varepsilon \, \nabla v_H  \,
    \cdot \, \boldsymbol{n} \rbrack \! \rbrack \|_{L^2(\partial T \cap \Omega)}^2,
  \end{equation*}
  where the scaling factors $\hslash_T = \min\{ \varepsilon^{-1/2} \, h_T, 1 \}$ ensure
  $\varepsilon$-robustness of the estimator.

  \begin{figure}
    \centering
    {\includegraphics[width=0.55\textwidth]{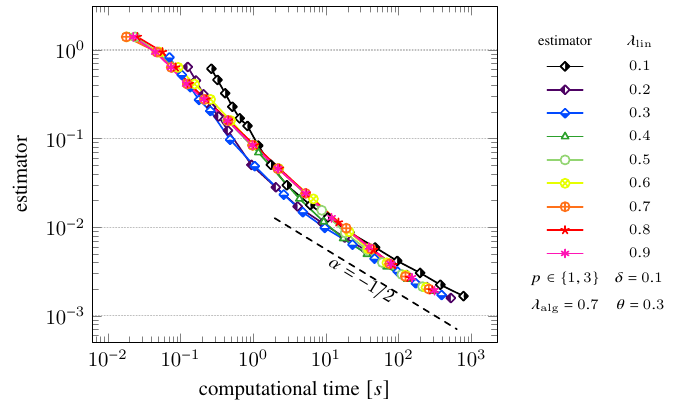}}
    {\includegraphics[width=0.4\textwidth]{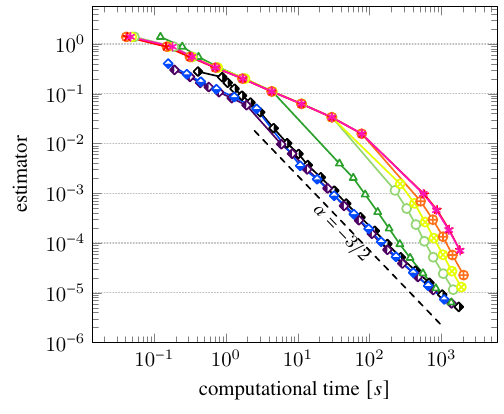}}
    \caption{Convergence plots of the error estimator $\eta_\ell(u^{\underline{k}, \underline{i}}_\ell)$ over
      computational
      time of Experiment~\ref{example:gordon2}. Left: Convergence plot for $p=1$ Right: Convergence plot for
    $p=3$.}\label{fig:singularSineGordon}
  \end{figure}
  In Figure~\ref{fig:singularSineGordon}, we plot the error estimator $\eta_\ell(u^{\underline{k},
  \underline{i}}_\ell)$ for
  all $(\ell, \underline{k},\underline{i}) \in \mathcal{Q}$ against the computational time for
  $\lambda_{\textup{lin}} \in
  \{0.1, 0.2,...,
  0.9\}$ and polynomial degrees $p \in \{1,3\}$. The decay rate is of (expected) optimal order $p/2$.  The
  choice of $\lambda_{\textup{lin}}$ does not play a major role in Figure~\ref{fig:singularSineGordon} (left) for
  $p=1$, but significantly prolongs the preasymptotic phase for $p=3$; see
  Figure~\ref{fig:singularSineGordon} (right). Figure~\ref{fig:singularSineGordonMeshes} shows meshes with
  $\#\mathtt{nDof} = 12475$ for $\lambda_{\textup{lin}} =0.2$ and $\# \mathtt{nDof} = 12152$ for $\lambda_{\rm
  lin} =0.7$. We see that the selection of a large $\lambda_{\textup{lin}}=0.7$ results in fewer
    linearization steps as well as fewer and algebraic solver steps but is subsequently taken care of by the mesh adaptivity. Hence,
  we observe stronger refinement in the interior.
  \begin{figure}
    \centering
    \includegraphics{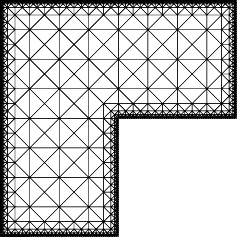}
    \hfil
    \includegraphics{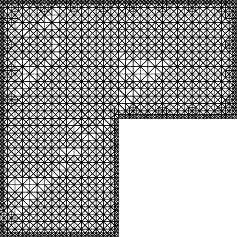}
    \caption{Mesh plot of Experiment~\ref{example:gordon2} for $p=3$. Left: Adaptivity parameter
      $\lambda_{\textup{lin}} = 0.2$. Right: Adaptivity parameter $\lambda_{\textup{lin}} =
    0.7$.}\label{fig:singularSineGordonMeshes}
  \end{figure}
  This experiment shows that Algorithm~\ref{algorithm:AILFEM} is suitable for a setting with dominating
  reaction given that a suitable norm on $\mathcal{X}$ is chosen. A large choice of $\lambda_{\textup{lin}}$
  seems possible, but
  pays off only after a long preasymptotic phase.

\end{experiment}



\printbibliography

\end{document}